\numberwithin{equation}{section}
\newtheorem{theorem}{Theorem}[section]
\newtheorem{assumption}{Assumption}[section]
\newtheorem{corollary}{Corollary}[section]
\newtheorem{lemma}{Lemma}[section]
\newtheorem{proposition}{Proposition}[section]
\newtheorem{definition}{Definition}[section]
\newtheorem{remark}{Remark}[section]
\newtheorem{example}{Example}[section]
\newcommand{\lemref}{Lemma~\ref}
\newcommand{\propref}{Proposition~\ref}
\newcommand{\thmref}{Theorem~\ref}
\renewcommand{\P}{\mathbb{P}}
\newcommand{\R}{\mathbb{R}}
\newcommand{\E}{\mathbb{E}}
\newcommand{\cE}{\mathcal{E}}
\newcommand{\N}{\mathbb{N}}
\newcommand{\F}{\mathcal{F}}
\newcommand{\X}{\mathbb{X}}
\newcommand{\B}{\mathcal{B}}
\newcommand{\Z}{\mathbb{Z}}
\newcommand{\T}{\mathcal{T}}
\newcommand{\eps}{\varepsilon}
\newcommand{\qq}{\quad\text{and}\quad}
\newcommand{\nada}[1]{}
\definecolor{gb}{rgb}{0, 0.2, 0.8}
\title{A Time-Inconsistent Dynkin Game: from Intra-personal to Inter-personal Equilibria}
\author{Yu-Jui Huang\thanks{
University of Colorado, Department of Applied Mathematics, Boulder, CO 80309-0526, USA, email: \texttt{yujui.huang@colorado.edu}. Partially supported by National Science Foundation (DMS-1715439) and the University of Colorado (11003573).}
 \and Zhou Zhou\thanks{
University of Sydney, School of Mathematics and Statistics, NSW 2006, Australia, email: \texttt{zhou.zhou@sydney.edu.au}.}
}
\date{\today}
\begin{document}
\maketitle

\begin{abstract}
This paper studies a nonzero-sum Dynkin game in discrete time under non-exponential discounting. For both players, there are two levels of game-theoretic reasoning intertwined. First, each player looks for an {\it intra-personal} equilibrium among her current and future selves, so as to resolve time inconsistency triggered by non-exponential discounting. Next, given the other player's chosen stopping policy, each player selects a best response among her intra-personal equilibria. A resulting {\it inter-personal} equilibrium is then a Nash equilibrium between the two players, each of whom employs her best intra-personal equilibrium with respect to the other player's stopping policy. Under appropriate conditions, we show that an inter-personal equilibrium exists, based on concrete iterative procedures along with Zorn's lemma. To illustrate our theoretic results, we investigate a two-player real options valuation problem: two firms negotiate a deal of cooperation to initiate a project jointly. By deriving inter-personal equilibria explicitly, we find that coercive power in negotiation depends crucially on the impatience levels of the two firms.  
\end{abstract}

\textbf{MSC (2020):} 
%49K21, % Optimality conditions:	Problems involving relations other than differential equations
60J20,  %	Applications of Markov chains and discrete-time Markov processes on general state spaces (social mobility, learning theory, industrial processes, etc.)
91A05,  	%2-person games
91A07,   % Games with infinitely many players
%93E20. % Optimal stochastic control
03E75.  %Applications of set theory
\smallskip

\textbf{Keywords:} Dynkin games, time inconsistency, non-exponential discounting, intra-personal equilibrium, inter-personal equilibrium, alternating fixed-point iterations.

%%%%%%%%%%%%%%%%%%
%%%%%%%%%%%%%%%%%%

\section{Introduction}
In dynamic optimization, {\it time inconsistency} is the self-conflicting situation where the same agent at different times (i.e. the current and future selves) cannot agree on a ``dynamically optimal strategy'' that is good for the entire planning horizon. A long-standing approach to resolving time inconsistency is Strotz' {\it consistent planning} \cite{Strotz55}: An agent should take her future selves' disobedience into account, so as to find a strategy that none of her future selves will have an incentive to deviate from. Essentially, such a strategy is an {\it intra-personal} equilibrium---an equilibrium established internally within the agent, among her current and future selves. 

The investigation of intra-personal equilibria, particularly their mathematical definitions and characterizations, has been the main focus of the literature on time inconsistency. This includes the classical framework in discrete time that relies on a straightforward backward sequential optimization detailed in \cite{Pollak68}, as well as the more recent development in continuous time that employs the spike variation technique introduced in Ekeland and Lazrak \cite{EL06}. The latter has led to vibrant research on time-inconsistent stochastic control, including \cite{EP08, EMP12, BMZ14, BKM17, Yong12}, among many others. 
Lately, marked progress has been made for time-inconsistent optimal stopping, along two different paths. One is to extend the spike variation technique from stochastic control to optimal stopping, as carried out in \cite{EWZ18, CL18, CL20}. The other path is the iterative approach developed in \cite{HN18, HNZ20, HY19}, which circumvents spike variations via a fixed-point perspective. Let us also mention the recent work \cite{https://doi.org/10.1111/mafi.12293} which builds a connection between different concepts of equilibria in these two paths.

A natural question follows all the developments: How does the intra-personal reconciliation within one single agent integrate into the interaction among multiple (non-cooperative) agents? Intuitively, there should be two levels of game-theoretic reasoning---the {\it inner} level where each agent looks for time-consistent strategies her future selves will actually follow, and the {\it outer} level where each agent chooses her best strategy (among time-consistent ones) in response to other agents' strategies. A resulting {\it inter-personal} equilibrium should then be a Nash equilibrium among all the agents, each of whom is restricted to choose time-consistent strategies (i.e. her intra-personal equilibria). To the best of our knowledge, such inter-personal equilibria built from intra-personal ones have not been properly formulated and studied in the literature. 
The crucial question is {\it whether} and {\it how} different agents' respective intra-personal equilibria can ultimately forge an inter-personal equilibrium among all agents. This paper will shed new light on this through a time-inconsistent Dynkin game. 

A Dynkin game involves two players interacting through their stopping strategies. %---their respective payoffs depend on who decides to stop first. 
The zero-sum version of the game, introduced in Dynkin \cite{Dynkin69}, has been substantially studied along various directions, including \cite{Dynkin69, Neveu-book-75} (discrete time), \cite{Bismut77, LM84, Morimoto84} (continuous time), \cite{Yasuda85, RSV01, TV02} (randomized strategies), \cite{CK96} (non-Markovian settings), and \cite{BY17} (model uncertainty), among others. Many of the studies not only show that a Nash equilibrium between the two players exists, but provide concrete constructions. By contrast, the nonzero-sum version of the game has received relatively less attention; see the early investigations \cite{Morimoto86, Ohtsubo87, Nagai87} and more recent ones \cite{HZ09, LS13, DFM18}, among others. Remarkably, all the developments above assume that the two players optimize their expected payoff/cost under exponential discounting (including the case of no discounting), which readily ensures time consistency. 

In this paper, we consider a nonzero-sum Dynkin game where the state process $X$ is a discrete-time strong Markov process taking values in a Polish space $\X$. Each player chooses to stop at the first entrance time of some Borel subset $S$ of $\X$, which will be called a stopping policy. The Dynkin game is in general time-inconsistent, as we allow the two players to take general discount functions that satisfy only a log sub-additive condition, i.e. \eqref{DI} below. This condition captures {\it decreasing impatience}, a widely observed feature of empirical discounting, and readily covers numerous non-exponential discount functions in behavioral economics; see the discussion below \eqref{DI}. 

As time inconsistency arises under non-exponential discounting, each player, when given the other's chosen stopping policy $T$, needs to find accordingly an intra-personal equilibrium $S$ among her current and future selves. Following the fixed-point approach in Huang and Nguyen-Huu \cite{HN18}, we define each player's intra-personal equilibrium as a fixed point of an operator, which encodes the aforementioned  inner level of game-theoretic reasoning (Definition~\ref{def:E}). To achieve an inter-personal equilibrium, a minimal requirement is that each player should attain her inner-level equilibrium simultaneously---that is, the following situation should materialize: $S$ is Player 1's intra-personal equilibrium given Player 2's stopping policy $T$, and $T$ is Player 2's intra-personal equilibrium given Player 1's stopping policy $S$. In this case, we say $(S,T)$ is a {\it soft} inter-personal equilibrium (Definition~\ref{def:soft}). To further refine this ``soft'' definition, we note that each player, when following the Nash equilibrium idea, should {\it not} be satisfied with an arbitrary intra-personal equilibrium, but aim at the {\it best} one under an appropriate optimality criterion. Reminiscent of the ``optimal equilibrium'' concept proposed in Huang and Zhou \cite{HZ19, HZ20}, we say that an intra-personal equilibrium is optimal if it generates larger values than any other intra-personal equilibrium, for not only the current but all future selves (Definition~\ref{def:optimal E}). 
This immediately brings about a stronger notion for an inter-personal equilibrium: $(S,T)$ is said to be a {\it sharp} inter-personal equilibrium if each player attains her {\it best} inner-level equilibrium simultaneously, i.e. $S$ is Player 1's {\it optimal} intra-personal equilibrium given Player 2's stopping policy $T$, and $T$ is Player 2's {\it optimal} intra-personal equilibrium given Player 1's stopping policy $S$ (Definition~\ref{def:sharp}).

The focus of this paper is to establish the existence of inter-personal equilibria, soft and sharp, through concrete iterative procedures. First, we develop for each player an {\it individual} {iterative procedure}, i.e. \eqref{S_n} below, that directly leads to her optimal intra-personal equilibrium (Theorem~\ref{t1}). This procedure can be viewed as an improvement to those in \cite{HN18, HNZ20}, which lead to intra-personal equilibria but not necessarily the optimal ones. Next, we devise an {\it alternating} iterative procedure, i.e. \eqref{alternating} below, in which the two players {take turns} to perform the individual iterative procedure repetitively. In each iteration, one player, given the other's stopping policy determined in the previous iteration, performs the individual iterative procedure and then updates her policy to the optimal intra-personal equilibrium obtained; see Section~\ref{subsec:problem} for details. Under appropriate conditions, this alternating iterative procedure converges and the limit, denoted by $(S_\infty, T_\infty)$, is guaranteed a soft inter-personal equilibrium (Theorem~\ref{t2}). While it is tempting to believe that $(S_\infty, T_\infty)$ is in fact sharp, in view of its structure revealed in Theorem~\ref{t2}, this is generally not the case: We demonstrate explicitly that $(S_\infty, T_\infty)$ is sharp in Example~\ref{eg1}, but only soft in the slightly modified Example~\ref{eg2}. In other words, the general existence of sharp inter-personal equilibria is still in question. Assuming additionally that the state process $X$ has transition densities, we are able to upgrade the construction of $(S_\infty, T_\infty)$ and apply Zorn's lemma appropriately, which yields the desired result that a sharp inter-personal equilibrium must exist (Theorem~\ref{t3}). 

It is worth noting that Theorems~\ref{t2} and \ref{t3} hinge on a supermartingale condition, i.e. \eqref{supermartingale} below. As shown in Section~\ref{subsec:supermartingale}, when the supermartingale condition fails, there may exist no inter-personal equilibrium, either soft or sharp; see Proposition~\ref{prop:no weak E} particularly. Let us point out that similar supermartingale conditions were also imposed in some studies on classical (time-consistent) nonzero-sum Dynkin games (e.g. \cite{Morimoto86, Ohtsubo87}) to facilitate the existence of a Nash equilibrium.

As an application, we study the negotiation between two firms (or countries) in Section~\ref{sec:application}. Suppose that each firm intends to coerce the other into unfavorable terms so as to obtain a larger payoff. A firm either waits until the other gives in and takes the larger payoff (i.e. its coercion works), or gives in to the other and accepts the unfavorable terms (i.e. its coercion fails). By computing explicitly the sharp inter-personal equilibrium between the two firms (Propositions~\ref{prop:1<2} and \ref{prop:1>2}, Corollary~\ref{coro:1>2}), we find that whether coercion in negotiation works depends on the impatience levels of the two firms: If a firm is less impatient than the other, its coercion always works; on the other hand, if a firm is significantly more impatient than the other, its coercion must fail. See particularly the discussion below Corollary~\ref{coro:1>2} for details.

The rest of the paper is organized as follows. Section~\ref{sec:model} introduces the model setup, formulates intra- and inter-personal equilibria, and collects preliminary results. Section~\ref{sec:one-player} develops an individual iterative procedure that directly yields a player's optimal intra-personal equilibrium. Under a supermartingale condition, the monotonicity of this procedure is also established. Section~\ref{sec:existence} devises an alternating iterative procedure, from which we prove the existence of soft and sharp inter-personal equilibria. Examples are presented to demonstrate the alternating iterative procedure and the necessity of the supermartingale condition. Finally, Section~\ref{sec:application} applies our analysis to the negotiation between two firms, relating coercive power to impatience level.

%%%%%%%%%%%%%%%%%%%%%%%%%%
%%%%%%%%%%%%%%%%%%%%%%%%%%

\section{The Model and Preliminaries}\label{sec:model}
Let $\Z_+:=\{0,1,2,...\}$ and consider a time-homogeneous strong Markov process $X = (X_t)_{t\in\Z_+}$ taking values in a Polish space $\X$. We denote by $\B$ the Borel $\sigma$-algebra of $\X$. On the path space $\Omega$, the set of all functions mapping $\Z_+$ to $\X$, let $(\F_t)_{t\in\Z_+}$ be the filtration generated by $X$ and $\T$ be the set of all $(\F_t)_{t\in\Z_+}$-stopping times. In addition, we consider $\F_\infty:= \bigcup_{t\in \Z_+}\F_t$. For any $x\in\X$, we denote by $X^x$ the process $X$ with initial value $X_0=x$, by $\P_x$ the probability measure on $(\Omega,\F_\infty)$ generated by $X^x$ (i.e. the law of $(X^x_t)_{t\in\Z_+}$), and by $\E_x$ the expectation under $\P_x$. 

Consider a nonzero-sum Dynkin game where the two players maximize their respective expected payoffs, determined jointly by their stopping strategies. Specifically, for $i\in\{1,2\}$, given the stopping time $\sigma\in\T$ chosen by the  other player, Player $i$ at the current state $x\in\X$ selects a stopping time $\tau\in\T$ to maximize her expected discounted payoff
\begin{equation}\label{J}
J_i(x,\tau,\sigma):=\E_x[F_i(\tau,\sigma)], 
\end{equation}
where
\begin{equation}\label{F}
F_i(\tau,\sigma):=\delta_i(\tau)f_i(X_\tau)1_{\{\tau<\sigma\}}+\delta_i(\sigma)g_i(X_\sigma)1_{\{\tau>\sigma\}}+\delta_i(\tau)h_i(X_\tau)1_{\{\tau=\sigma\}},\quad \forall\tau,\sigma\in\T.
\end{equation}
Here, $\delta_i:\Z_+\to [0,1]$ is Player $i$'s discount function, assumed to be strictly decreasing with $\delta(0)=1$, and $f_i, g_i, h_i:\X\to\R_+$ are Player $i$'s payoff functions, assumed to be Borel measurable. Note that we allow $\tau, \sigma\in\T$ to take the value $+\infty$. For any $\omega\in\{\tau=\sigma=+\infty\}$, we simply define $F_i(\tau,\sigma)(\omega):= \limsup_{t\to\infty} \delta_i(t)h_i(X_t(\omega))$. To ensure that $J_i(x,\tau,\sigma)$ in \eqref{J} is well-defined, we will impose throughout the paper 
\begin{equation}\label{dominated}
\E_x\bigg[\sup_{t\in\N}\delta_i(t)\big(f_i(X_t)+g_i(X_t)+h_i(X_t)\big)\bigg]<\infty\quad \forall x\in\X.
%\quad\hbox{and}\quad \E_x\bigg[\limsup_{t\to\infty}\delta_i(t)h_i(X_t)\bigg],\quad \forall x\in\X,
\end{equation}

As mentioned in Introduction, the vast literature on Dynkin games mostly assumes exponential discounting, i.e. $\delta_i(t)=e^{-\beta_i t}$ for some $\beta_i>0$. Empirical studies (e.g. \cite{Thaler81, LT89}), on the other hand, have found that individuals do not normally discount exponentially. In this paper, the only standing assumption on $\delta_i$, $i\in\{1,2\}$, is
\begin{equation}\label{DI}
\delta_i(s)\delta_i(t)\le \delta_i(s+t),\quad \forall s,t\in\Z_+.%\ \hbox{and}\ i=1,2.
\end{equation}
This particularly captures {\it decreasing impatience}, a widely observed feature of empirical discounting. Numerous non-exponential discount functions in behavioral economics, such as hyperbolic, generalized hyperbolic, and pseudo-exponential discount functions, readily satisfy \eqref{DI}; see the discussion below \cite[Assumption 3.12]{HN18} for details. 

In a one-player stopping problem, it is well-understood that non-exponential discounting induces time inconsistency: An optimal stopping strategy derived at the current state $x\in\X$ may no longer be optimal at a subsequent state $y\neq x$. In other words, the current and future selves cannot agree on a ``dynamically optimal stopping strategy'' that is good for the entire planning horizon; see e.g. \cite[Section 2.2]{HN18} for an explicit demonstration. Strotz' {\it consistent planning} \cite{Strotz55} is a long-standing approach to resolving time inconsistency: Knowing that her future selves may overturn her current plan, an agent selects the best present action taking the future disobedience as a constraint; the resulting strategy is a (subgame perfect) Nash equilibrium from which no future self has an incentive to deviate. 

In our Dynkin game, thanks to the time-homogeneous Markovian setup, we assume that each player decides to stop or to continue depending on her current state $x\in\X$. That is, each player stops at the first entrance time of some $S\in\B$, defined by %and hitting times of $S$, respectively, by 
\[
\rho_S:=\inf\{t\geq 0:\ X_t\in S\}. %\quad\text{and}\quad\rho^+_S:=\inf\{t>0:\ X_t\in S\}.
\]
For convenience, we will often call $S\in\B$ a {\it stopping policy}. This corresponds to a ``pure strategy'' in economic terms.\footnote{See Remark~\ref{rem:randomized} for discussions on the use of pure and randomized strategies.} For $i\in\{1,2\}$, given the other player's stopping policy  $T\in\B$, Player $i$ is faced with time inconsistency among her current and future selves (as explained above), and needs to find an equilibrium stopping policy at the {\it intra-personal} level. Following \cite[Section 2.1]{HZ20} (or \cite[Section 3.1]{HN18}), Strotz' consistent planning boils down to the current self's game-theoretic reasoning: ``Given that my future selves will follow the policy $S\in\B$, what is the best policy today in response to that?'' The best policy is determined by comparing the payoff of immediate stopping $J_i(x,0,\rho_T)$ and the payoff of continuation $J_i(x,\rho^+_S,\rho_T)$, where 
\[
\rho^+_S:=\inf\{t>0:\ X_t\in S\}
\]
is the first hitting time to $S$. This leads to the following stopping policy
\begin{align}\label{Theta}
\Theta_i^T(S):=&\{x\in S: J_i(x,0,\rho_T)\geq J_i(x,\rho^+_S,\rho_T)\}\cup\{x\notin S: J_i(x,0,\rho_T)>J_i(x,\rho^+_S,\rho_T)\}\in \B.
\end{align}
We can consider $\Theta^T_i:\B\to\B$ as an {\it improving} operator for Player $i$: Given the other player's stopping policy $T\in\B$, $\Theta^T_i$ improves the present policy $S\in\B$ of Player $i$ to $\Theta^T(S)\in\B$. 

For {each} player, we define an equilibrium at the {\it intra-personal} level (i.e. among the player's current and future selves) in the same spirit as \cite[Definition 2.2]{HZ20} and \cite[Definition 3.7]{HN18}.

\begin{definition}\label{def:E}
For $i\in\{1,2\}$, $S\in\B$ is called Player $i$'s intra-personal equilibrium w.r.t. (with respect to) $T\in\B$ if $\Theta_i^T(S)=S$. We denote by $\cE_i^T$ the set of all Player $i$'s intra-personal equilibria w.r.t. $T\in\B$. 
\end{definition}

\begin{remark}
The above fixed-point definition of an intra-personal equilibrium was introduced in \cite{HN18} and followed by \cite{HNZ20, HZ20, HY19}, among others. 
Note that there is a slightly different formulation in \cite{HZ19}: If we follow \cite{HZ19}, particularly (2.5) therein, $\Theta^T_i(S)$ in \eqref{Theta} needs to be modified as
\begin{align}\label{Theta'}
\bar \Theta_i^T(S):=\{x\in \X: J_i(x,0,\rho_T)\geq J_i(x,\rho^+_S,\rho_T)\}.%\supseteq \Theta_i^T(S). 
\end{align}
Observe from \eqref{Theta} and \eqref{Theta'} that the equilibrium condition ``$\Theta_i^T(S)=S$'' in Definition~\ref{def:E} is slightly weaker than ``$\bar \Theta_i^T(S)=S$'' as in \cite[Definition 2.3]{HZ19}. 
This paper uses the slightly weaker definition because it conforms more closely to the Nash equilibrium idea---one deviates to a new policy only when it is strictly better than the current one; see the explanations at the beginning of \cite[p.7]{HN18}. Moreover, the weaker definition facilitates the search for intra-personal equilibria, as it allows for the explicit construction in Proposition~\ref{l3} below.  
\end{remark}

Based on Definition~\ref{def:E}, we introduce the first kind of equilibria at the {\it inter-personal} level (i.e. between the two players)---the {\it soft} inter-personal equilibria.

\begin{definition}\label{def:soft}
We say $(S,T)\in\B\times\B$ is a soft inter-personal equilibrium (for the Dynkin game) if $S\in \cE_1^T$ and $T\in\cE_2^S$ (i.e. $\Theta_1^T(S)=S$ and $\Theta_2^S(T)=T$). 
We denote by $\cE$ the set of all soft inter-personal equilibria.  
\end{definition}

Essentially, $(S,T)\in\cE$ means that each player {\it simultaneously} attains an equilibrium at the intra-personal level, %among her current and future selves, 
given the other's stopping policy: $S$ is Player $1$'s intra-personal equilibrium w.r.t. Player 2's policy $T$, and $T$ is Player $2$'s intra-personal equilibrium w.r.t. Player 1's policy $S$.

As emphasized in \cite{HZ19, HZ20}, Strotz' consistent planning is a {\it two-phase} procedure: An agent first determines the strategies that she will actually follow over time ({\it Phase I}), and then chooses the best one among them ({\it Phase II}). In our Dynkin game, {\it Phase I} amounts to each player finding her intra-personal equilibria (w.r.t. the other player's stopping policy); {\it Phase II} is then the search for an {\it optimal} intra-personal equilibrium, defined as below.

\begin{definition}\label{def:optimal E}
For $i\in\{1,2\}$ and $T\in\B$, the value function associated with $S\in\cE_i^T$ is defined by 
\[
U_i^T(x, S):=J_i(x,0,\rho_T)\vee J_i(x,\rho^+_S,\rho_T),\quad  x\in\X. 
\]
We say $S\in\cE_i^T$ is Player $i$'s optimal intra-personal equilibrium w.r.t. $T\in\B$ if for any $R\in\cE_i^T$,
\[
U_i^T(x,S)\ge U_i^T(x,R)\quad \hbox{for all}\ x\in\X. 
\]
%\[
%J(x,0,\rho_T)\vee J(x,\rho^+_S,\rho_T)\geq J(x,0,\rho_T)\vee J(x,\rho^+_R,\rho_T).
%\]
We denote by $\widehat{\cE}_i^T$ the set of all Player $i$'s optimal intra-personal equilibria w.r.t. $T$. 
\end{definition}

\begin{remark}
Thanks to $S\in\cE_i^T$, $U_i^T(x,S)$ defined above coincides with $J_i(x,\rho_S,\rho_T)$. Indeed, by $\Theta_i^T(S) = S$ (due to $S\in\cE_i^T$) and \eqref{Theta}, $J_i(x,\rho_S,\rho_T) = J_i(x,0,\rho_T)\ge J_i(x,\rho^+_S,\rho_T)$ for $x\in S$ and $J_i(x,\rho_S,\rho_T) =  J_i(x,\rho^+_S,\rho_T)\ge J_i(x,0,\rho_T)$ for $x\notin S$. That is, $J_i(x,\rho_S,\rho_T) = J_i(x,0,\rho_T)\vee J_i(x,\rho^+_S,\rho_T) = U_i^T(x,S)$ for all $x\in\X$. 
\end{remark}

Definition~\ref{def:optimal E} follows the ``optimal equilibrium'' notion introduced in \cite{HZ19}. It is a rather strong optimality criterion, as it requires a (subgame perfect Nash) equilibrium to dominate any other equilibrium on the entire state space---a rare occurrence in game theory. Nonetheless, for the one-player optimal stopping problem under non-exponential discounting, as long as the discount function satisfies \eqref{DI}, the existence of an optimal equilibrium has been established first in discrete time \cite{HZ19} and then in continuous time, including \cite{HZ20, HW20} (diffusion models) and \cite{https://doi.org/10.1111/mafi.12293} (continuous-time Markov chain models).

Based on Definition~\ref{def:optimal E}, we introduce the second kind of equilibria at the {\it inter-personal} level (i.e. between the two players)---the {\it sharp} inter-personal equilibria.

\begin{definition}\label{def:sharp}
We say $(S,T)\in\B\times\B$ is a sharp inter-personal equilibrium (for the Dynkin game) if $S\in\widehat{\cE}_1^T$ and $T\in \widehat{\cE}_2^S$. We denote by $\widehat \cE$ the set of all sharp inter-personal equilibria.  
\end{definition}

A sharp inter-personal equilibrium, %$(S,T)\in\widehat \cE$, 
compared with a soft one in Definition~\ref{def:soft}, conforms to the Nash equilibrium concept more closely. 
Given the other player's policy, what a player aims at should {\it not} be an arbitrary agreement among her current and future selves (as is stipulated in Definition~\ref{def:soft}), {\it but} the agreement that is best-rewarding---the one that generates the largest possible value for every incarnation of herself in time.  
In other words, our time-inconsistent Dynkin game involves {\it two levels} of game-theoretic reasoning. Player 1 wants to find the best response to Player 2's policy {\it at the inter-personal level}, while maintaining an agreement among her current and future selves {\it at the intra-personal level}; Player 2 does the same in response to Player 1's policy. In the end, each player chooses an optimal intra-personal equilibrium w.r.t. the other player's policy, leading to a sharp inter-personal equilibrium for the Dynkin game.

It is worth noting that our definition of a sharp inter-personal equilibrium covers, as a special case, the standard Nash equilibrium in a time-consistent Dynkin game. 

\begin{remark}
In the time-consistent case of exponential discounting, a Nash equilibrium for the Dynkin game is defined as a tuple of stopping times $(\hat \tau,\hat \sigma)$ such that $\hat \tau$ is Player 1's optimal stopping time given that Player 2 employs $\hat \sigma$, while $\hat\sigma$, at the same time, is Player 2's optimal stopping time given that Player 1 employs $\hat \tau$. In a time-homogeneous setting, let $\hat S$ (resp. $\hat T$) denote the stopping region associated with $\hat\tau$ (resp. $\hat\sigma$). In view of \cite[Proposition 3.11]{HN18}, $\hat S$ is readily Player 1's intra-personal equilibrium w.r.t. $\hat T$. Moreover, by the argument in \cite[Remark 2.12]{HW20}, $\hat S$ is in fact Player 1's {\it optimal} intra-personal equilibria w.r.t. $\hat T$---namely, $\hat S\in \widehat{\mathcal E}^{\hat T}_1$. By the same token, we have  $\hat T\in \widehat{\mathcal{E}}^{\hat S}_2$. It then follows that $(\hat S,\hat T)$ is a sharp inter-personal equilibrium.  

That is to say, in the classical time-consistent case, a Nash equilibrium $(\hat \tau,\hat \sigma)$ in a time-homogeneous model is automatically a sharp inter-personal equilibrium, once we re-state $(\hat \tau,\hat \sigma)$ using their respective stopping regions. 
\end{remark}

\subsection{Problem Formulation}\label{subsec:problem}
This paper aims to establish the {existence} of soft and sharp inter-personal equilibria, using concrete iterative procedures.
Although the existence and construction of each player's intra-personal equilibria is well-understood (based on the one-player results  \cite{HN18, HZ19}), it is unclear whether the two players' respective intra-personal equilibria can be coordinated properly to form an inter-personal equilibrium, either soft or sharp. 

%Similarly, while each player's optimal intra-personal equilibria is known to exist with a desirable characterization (in view of developments in \cite{HZ19}), it is in question how and whether a sharp inter-personal equilibrium can be built between the two players. 

We will tackle this in two steps. First, we look into the one-player problem more closely, developing for each player an {\it individual} {iterative procedure} that directly brings about her optimal intra-personal equilibrium (Theorem~\ref{t1}). %This new procedure can be viewed as an improvement to those in \cite{HN18, HNZ20}, which lead to intra-personal equilibria but not necessarily the optimal ones. 
Next, we devise an {\it alternating} iterative procedure in which the two players {\it take turns} to perform the individual iterative procedure:
\begin{itemize}
\item [1.] With respect to Player 1's initial policy $S_0\in\B$, Player 2 performs the individual iterative procedure to get an optimal intra-personal equilibrium $T_0\in\B$.
\item [2.] With respect to Player 2's policy $T_0\in\B$, Player 1 performs the individual iterative procedure to get an optimal intra-personal equilibrium $S_1\in\B$.
\item [3.] With respect to Player 1's policy $S_1\in\B$, Player 2 performs the individual iterative procedure to get an optimal intra-personal equilibrium $T_1\in\B$.
\item [\vdots]
\end{itemize}
The hope is that this alternating iterative procedure will ultimately converge, with the limit $(S_\infty, T_\infty)$ being a soft, or even sharp, inter-personal equilibrium. This will be investigated in detail in Section~\ref{sec:existence}, with affirmative results established in Theorems~\ref{t2} and \ref{t3}.

\subsection{Preliminaries}
We collect two technical results that will be useful throughout the paper. The first one concerns the convergence of first entrance and hitting times.

\begin{lemma}\label{l1}
Let $(S_n)_{n\in\N}$ be a monotone sequence in $\B$. For any $\omega\in\Omega$, there exists $N\in\N$ such that $\rho_{S_n}(\omega)=\rho_{S_\infty}(\omega)$ for all $n\ge N$, where 
\begin{equation}
S_\infty := 
\begin{cases}
\bigcup_{n\in\N} S_n,\quad \hbox{if}\ (S_n)\ \hbox{is nondecreasing},\\
\bigcap_{n\in\N} S_n,\quad \hbox{if}\ (S_n)\ \hbox{is nonincreasing}. 
\end{cases}
\end{equation}
%In particular, $\rho_{S_n}\downarrow\rho_{S_\infty}$ (resp. $\rho_{S_n}\uparrow\rho_{S_\infty}$) if $(S_n)$ is nondecreasing (resp. nonincreasing). 
The same result holds with $\rho$ replaced by $\rho^+$. 
\end{lemma}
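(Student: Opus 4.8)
The plan is to fix an arbitrary $\omega\in\Omega$ and treat the nondecreasing and nonincreasing cases separately, in each case reducing the claim about stopping times to a purely combinatorial statement about the membership sequence $\big(1_{\{X_t(\omega)\in S_n\}}\big)_{t,n}$. I would carry out the argument for $\rho$ first and then indicate the trivial modification for $\rho^+$ (replacing ``$t\ge 0$'' by ``$t>0$'' in the infimum throughout).

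\emph{Nondecreasing case.} Suppose $(S_n)$ is nondecreasing, so $S_\infty=\bigcup_n S_n$. First I would observe $\rho_{S_n}(\omega)$ is nonincreasing in $n$ (larger sets are hit no later) and $\rho_{S_n}(\omega)\ge \rho_{S_\infty}(\omega)$ for every $n$ (since $S_n\subseteq S_\infty$); hence $\lim_n \rho_{S_n}(\omega)=:\ell$ exists in $\Z_+\cup\{\infty\}$ and $\ell\ge \rho_{S_\infty}(\omega)$. If $\rho_{S_\infty}(\omega)=\infty$, then $\rho_{S_n}(\omega)=\infty$ for all $n$ and we may take $N=1$. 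Otherwise set $t^\ast:=\rho_{S_\infty}(\omega)<\infty$; then $X_{t^\ast}(\omega)\in S_\infty=\bigcup_n S_n$, so $X_{t^\ast}(\omega)\in S_N$ for some $N\in\N$, whence $\rho_{S_N}(\omega)\le t^\ast = \rho_{S_\infty}(\omega)$. Combined with the reverse inequality above and monotonicity in $n$, this gives $\rho_{S_n}(\omega)=\rho_{S_\infty}(\omega)$ for all $n\ge N$.

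\emph{Nonincreasing case.} Suppose $(S_n)$ is nonincreasing, so $S_\infty=\bigcap_n S_n$. Here $\rho_{S_n}(\omega)$ is nondecreasing in $n$ and $\rho_{S_n}(\omega)\le \rho_{S_\infty}(\omega)$ for all $n$. If $\rho_{S_n}(\omega)\to\infty$, I would argue this forces $\rho_{S_\infty}(\omega)=\infty$ as well (indeed if $X_t(\omega)\in S_\infty$ for some finite $t$ then $X_t(\omega)\in S_n$ for all $n$, so $\rho_{S_n}(\omega)\le t$ for all $n$, contradicting divergence), and then equality holds for, say, $N=1$ once we interpret $\infty=\infty$; more carefully, to get genuine equality of finite values I would instead note that if all $\rho_{S_n}(\omega)=\infty$ we are done, and if some $\rho_{S_{n_0}}(\omega)<\infty$ then the nondecreasing integer sequence $\big(\rho_{S_n}(\omega)\big)_{n\ge n_0}$ is either eventually constant — giving the claim directly with the limiting value $\ell$, since then $X_\ell(\omega)\in S_n$ for all large $n$, hence $X_\ell(\omega)\in\bigcap_{n\ge n_0}S_n$; because $(S_n)$ is nonincreasing this equals $S_\infty$, so $\rho_{S_\infty}(\omega)\le\ell$ and equality follows — or it diverges to $\infty$, which I would rule out by the following argument. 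If $\rho_{S_n}(\omega)\uparrow\infty$, pick, for each $n$, the value $t_n:=\rho_{S_n}(\omega)$; then $X_{t_n}(\omega)\in S_n$, but $X_{s}(\omega)\notin S_n$ for all $s<t_n$, in particular $X_{t_{n-1}}(\omega)\notin S_n$. Since $t_n\to\infty$ and $\rho_{S_\infty}(\omega)$ is a fixed element of $\Z_+\cup\{\infty\}$ that dominates every $t_n$, we must have $\rho_{S_\infty}(\omega)=\infty$; but $\rho_{S_\infty}(\omega)=\infty$ together with $\rho_{S_n}(\omega)\uparrow\infty$ is exactly the assertion to be proved, understood as: for each fixed finite horizon the entrance times agree in the limiting sense — and here I would simply restate the lemma's conclusion as the equality of the two (possibly infinite) values for $n\ge N$, taking $N=1$ when both are $\infty$.

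\emph{Main obstacle.} The one delicate point is the nonincreasing case when $\rho_{S_\infty}(\omega)=\infty$: unlike the nondecreasing case, there is no single time witnessing membership, so one cannot extract a finite $N$ from a single path value, and the statement ``$\rho_{S_n}(\omega)=\rho_{S_\infty}(\omega)$ for all $n\ge N$'' is only meaningful if one permits the common value $+\infty$. I expect the cleanest fix is to split the nonincreasing case at the outset into (a) $\rho_{S_n}(\omega)=\infty$ for all $n$ — then $\rho_{S_\infty}(\omega)=\infty$ and $N=1$ works — and (b) $\rho_{S_{n_0}}(\omega)<\infty$ for some $n_0$ — then $\big(\rho_{S_n}(\omega)\big)_{n\ge n_0}$ is a nondecreasing sequence of integers bounded above by the finite quantity one gets from any single hit, hence eventually constant, say equal to $\ell$ for $n\ge N$; feeding $X_\ell(\omega)\in\bigcap_{n\ge N}S_n=S_\infty$ back yields $\rho_{S_\infty}(\omega)\le\ell\le\rho_{S_\infty}(\omega)$, i.e. equality. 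The $\rho^+$ versions are identical verbatim with the infimum taken over $t>0$. $\qquad\square$
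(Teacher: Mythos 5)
Your nondecreasing case, and the eventually-constant subcase of your nonincreasing case, are correct and essentially the paper's own argument (the paper sets $t:=\lim_n\rho_{S_n}(\omega)$, assumes ``without loss of generality'' that $t<\infty$, observes the sequence is then eventually equal to $t$, and feeds $X_t(\omega)\in\bigcap_n S_n=S_\infty$ back to get equality). The genuine problem is your handling of the remaining subcase, where $(S_n)$ is nonincreasing, every $\rho_{S_n}(\omega)$ is finite, but $\rho_{S_n}(\omega)\uparrow\infty$. Your ``cleanest fix'' claims that once some $\rho_{S_{n_0}}(\omega)<\infty$, the nondecreasing sequence $(\rho_{S_n}(\omega))_{n\ge n_0}$ is bounded above ``by the finite quantity one gets from any single hit'' and hence eventually constant. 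That is false: for shrinking sets an early entrance time does not bound later ones. Take $\X=\R$, $S_n=[n,\infty)$ and the path $X_t(\omega)=t$ (which lies in $\Omega$, the set of all maps $\Z_+\to\X$); then $\rho_{S_1}(\omega)=1<\infty$ but $\rho_{S_n}(\omega)=n\uparrow\infty$, while $S_\infty=\emptyset$ and $\rho_{S_\infty}(\omega)=\infty$. So your case split (a)/(b) does not dispose of the divergent scenario, and the boundedness step it relies on is wrong.

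Your in-text attempt to ``rule out'' divergence also does not rule it out: it correctly shows that divergence forces $\rho_{S_\infty}(\omega)=\infty$, but then retreats to asserting the conclusion ``in a limiting sense,'' which conflates the limit $+\infty$ with the individual finite values $\rho_{S_n}(\omega)$ and is strictly weaker than the eventual-equality statement of the lemma. Indeed, in the example above no $N$ with $\rho_{S_n}(\omega)=\rho_{S_\infty}(\omega)$ for all $n\ge N$ exists, so the literal conclusion cannot be salvaged there; what is true in general is only the extended-valued convergence $\rho_{S_n}(\omega)\to\rho_{S_\infty}(\omega)$ in $\Z_+\cup\{\infty\}$ (if $\rho_{S_m}(\omega)=\infty$ for some $m$, eventual equality does hold with $N=m$). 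For comparison, the paper's proof simply sidesteps this scenario via its ``WLOG $t<\infty$,'' i.e.\ it only treats the eventually-constant case — exactly the part of your argument that is fine — so your instinct that something delicate happens here is right, but the repair as written introduces an incorrect step rather than closing the case; a clean resolution would either restrict attention to $\lim_n\rho_{S_n}(\omega)<\infty$ or restate the conclusion as convergence of the (extended-valued) entrance times.
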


\begin{proof}
Fix $\omega\in\Omega$. If $(S_n)$ is nondecreasing, set $t:=\rho_{S_\infty}(\omega)$. Without loss of generality, assume  $t<\infty$. As $X_t(\omega)\in S_\infty=\bigcup_{n\in\N} S_n$, there exists $N\in\N$ such that $X_t(\omega)\in S_n$ for all $n\ge N$. Hence, $\rho_{S_n}(\omega)\leq t$ for all $n\ge N$. If there exists $n^*\ge N$ such that $\rho_{S_{n^*}}(\omega)< t$, then $\rho_{S_\infty}(\omega)\le \rho_{S_{n^*}}(\omega)<t$, a contradiction. We thus conclude $\rho_{S_n}(\omega)= t = \rho_{S_\infty}(\omega)$ for all $n\ge N$. 
On the other hand, if $(S_n)$ is nonincreasing, set $t:=\lim_{n\to\infty}\rho_{S_n}(\omega)$. Without loss of generality, assume $t<\infty$. Then, there exists $N\in\N$ such that $\rho_{S_n} (\omega)=t$ for all $n\ge N$. Hence, $X_{t}(\omega)\in S_n$ for all $n\ge N$ and thus $X_t(\omega)\in S_\infty=\bigcap_{n\in\N} S_n$. This implies $\rho_{S_\infty}(\omega)\le t$. Since $\rho_{S_\infty}(\omega)\ge t$ by definition, we conclude $\rho_{S_\infty}(\omega)= t=\rho_{S_n}(\omega)$ for all $n\ge N$. 
 The same arguments as above hold with $\rho$ replaced by $\rho^+$. 
\end{proof}

The next result states that {\it any} stopping policy containing an intra-personal equilibrium $R$ must be dominated by $R$. This kind of result was first established for one-player stopping problems in \cite[Lemma 3.1]{HZ20}, and is now extended to a Dynkin game setting.  

\begin{lemma}\label{l5}
Fix $i\in\{1,2\}$ and assume $h_i\leq g_i$. Then, for any $R$, $S\in\B$ with $R\subseteq S$ and $R\in\cE_i^T$ for some $T\in\B$, 
\[
J_i(x,\rho^+_R,\rho_T)\geq J_i(x,\rho^+_S,\rho_T)\quad  \hbox{for all}\ x\in\X.
\]
\end{lemma}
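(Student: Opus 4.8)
The plan is to fix $x\in\X$ and compare the payoffs of the continuation times $\rho_S^+$ and $\rho_R^+$ by tracking where the process first lands. The key observation is that since $R\subseteq S$, we have $\rho_S^+\le \rho_R^+$ pathwise, and moreover $X_{\rho_S^+}\in S$ while, on the event $\{\rho_S^+<\rho_R^+\}$, the hitting point $X_{\rho_S^+}$ lies in $S\setminus R$. So I would condition on $\F_{\rho_S^+}$ and use the strong Markov property to write
\[
J_i(x,\rho_S^+,\rho_T)=\E_x\big[\,\widetilde{F}_i\,;\,\rho_S^+<\rho_T\,\big]+\E_x\big[\delta_i(\rho_T)g_i(X_{\rho_T})\,;\,\rho_T\le \rho_S^+\,\big],
\]
and similarly for $\rho_R^+$, where on $\{\rho_S^+<\rho_T\}$ the remaining payoff from $X_{\rho_S^+}$ onward involves $J_i(X_{\rho_S^+},0,\rho_T)$ (since the continuation region $S$ is hit immediately there) whereas from $X_{\rho_R^+}$ the remaining payoff involves $J_i(X_{\rho_R^+},\rho_R^+,\rho_T)$. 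Actually it is cleaner to avoid splitting on $\rho_T$ explicitly and argue directly: on the path, starting from time $\rho_S^+$, either $R$ has also been hit (so the two strategies already agree) or $X_{\rho_S^+}\in S\setminus R$, in which case I want to compare ``stop now'' against ``wait for $R$''.

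The crucial input is $R\in\cE_i^T$, i.e. $\Theta_i^T(R)=R$. By \eqref{Theta}, for any point $y\notin R$ we have $J_i(y,0,\rho_T)\le J_i(y,\rho_R^+,\rho_T)$; that is, at a state outside the equilibrium $R$, immediate stopping is (weakly) worse than waiting for $R$. Applying this at the random state $y=X_{\rho_S^+}$ on the event $\{\rho_S^+<\rho_R^+\}\cap\{\rho_S^+<\rho_T\}$ (where indeed $X_{\rho_S^+}\notin R$), the strong Markov property converts this pointwise inequality into
\[
\E_x\big[\delta_i(\rho_S^+)f_i(X_{\rho_S^+})\,;\,\rho_S^+<\rho_R^+\wedge\rho_T\big]\ \le\ \E_x\big[F_i(\rho_R^+,\rho_T)\,;\,\rho_S^+<\rho_R^+\wedge\rho_T\big],
\]
with the discount factor $\delta_i(\rho_S^+)$ factoring out on both sides by the log-subadditivity \eqref{DI} in the same manner as in \cite[Lemma 3.1]{HZ20}. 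On the complementary event $\{\rho_R^+\le\rho_S^+\}\cup\{\rho_T\le\rho_S^+\}$ the two strategies $\rho_S^+$ and $\rho_R^+$ produce identical contributions (either $R$ is reached no later than $S$, forcing $\rho_R^+=\rho_S^+$ since $R\subseteq S$ gives $\rho_S^+\le\rho_R^+$; or the opponent stops first, at or before $\rho_S^+\le\rho_R^+$, contributing the same $g_i$ term). Summing the two cases yields $J_i(x,\rho_R^+,\rho_T)\ge J_i(x,\rho_S^+,\rho_T)$.

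The main obstacle I anticipate is the careful bookkeeping of the discount factors: one cannot directly apply the strong Markov property to $J_i(X_{\rho_S^+},\cdot,\cdot)$ because the discount function $\delta_i$ is evaluated at absolute times, not times relative to $\rho_S^+$, so the shift introduces a factor that is only controlled by the inequality $\delta_i(\rho_S^+)\delta_i(t)\le\delta_i(\rho_S^+ + t)$ from \eqref{DI} — and this inequality points in the favorable direction precisely because we are comparing a ``stop now'' payoff (which I want to bound \emph{above}) against a ``wait'' payoff. The role of the hypothesis $h_i\le g_i$ is to handle the boundary event $\{\rho_S^+=\rho_T\}$ versus $\{\rho_R^+=\rho_T\}$ and, together with log-subadditivity, to ensure that delaying to $\rho_R^+$ when the opponent is at $\rho_T$ in between never strictly helps the comparison; I would isolate that in a short sublemma mirroring the one-player argument of \cite[Lemma 3.1]{HZ20} and then glue the pieces together exactly as above.
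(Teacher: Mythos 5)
Your proposal is correct and follows essentially the same route as the paper's proof: split off the tie event $\{\rho^+_S=\rho_T<\rho^+_R\}$ (handled by $h_i\le g_i$) and, on $\{\rho^+_S<\rho_T\wedge\rho^+_R\}$, condition on $\F_{\rho^+_S}$, use the strong Markov property together with \eqref{DI} to pull out $\delta_i(\rho^+_S)$, and invoke $R\in\cE_i^T$ at the state $X_{\rho^+_S}\notin R$ to compare ``stop now'' with ``wait for $R$''. The only slip is the parenthetical claim that when the opponent stops at or before $\rho^+_S$ the two strategies contribute ``the same $g_i$ term''---on $\{\rho_T=\rho^+_S<\rho^+_R\}$ they contribute $h_i$ versus $g_i$---but your closing paragraph already identifies this boundary case and resolves it with $h_i\le g_i$, exactly as the paper does.
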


\begin{proof}
Consider $A:=\{\omega\in\Omega: \rho^+_S=\rho_T<\rho^+_R\}$ and $B:=\{\omega\in\Omega: \rho^+_S<\rho_T\wedge\rho^+_R\}$. For any $x\in\X$, by \eqref{J} and \eqref{F}, 
\begin{align}\label{1_A,B}
J_i(x,\rho^+_R,\rho_T)-J_i(x,\rho^+_S,\rho_T)=\E_x\left[(1_A+1_B)(F_i(\rho^+_R,\rho_T)-F_i(\rho^+_S,\rho_T)\right].%\\
%&= \E_x[1_A(\delta(\rho_T)g(X_{\rho_T})-\delta(\rho_T)h(X_{\rho_T}))]+ \E_x\left[1_B\left(\E_x\left[F(\rho^+_R,\rho_T)\Big|\mathcal{F}_{\rho^+_S}\right]-F(\rho^+_S,\rho_T)\right)\right]
\end{align}
By the assumption $g_i\ge h_i$, 
\begin{equation}\label{1_A}
\E_x\left[1_A(F_i(\rho^+_R,\rho_T)-F_i(\rho^+_S,\rho_T))\right]=\E_x[1_A(\delta_i(\rho_T)g_i(X_{\rho_T})-\delta_i(\rho_T)h_i(X_{\rho_T}))]\geq 0.
\end{equation}
On the other hand, 
\begin{align}\label{1_B}
\E_x\left[1_B(F_i(\rho^+_R,\rho_T)-F_i(\rho^+_S,\rho_T))\right]&=\E_x\left[1_B\left(\E_x\left[F_i(\rho^+_R,\rho_T)\Big|\mathcal{F}_{\rho^+_S}\right]-F_i(\rho^+_S,\rho_T)\right)\right].%\\
%&\geq\E_x\left[1_B\delta(\rho^+_S)\left(J(X_{\rho^+_S},\rho^+_R,\rho_T)-J(X_{\rho^+_S},0,\rho_T)\right)\right]\\
%&\geq 0,
\end{align}
In view of \eqref{F}, \eqref{DI}, and the nonnegativity of $f_i,g_i$, and $h_i$,
\begin{align}
1_B\ &\E_x\left[F_i(\rho^+_R,\rho_T)\ \middle|\ \mathcal{F}_{\rho^+_S}\right]\notag \\
& \ge1_B \delta_i(\rho^+_S) \E_x\Big[\delta_i(\rho^+_R-\rho^+_S)f_i(X_{\rho^+_R})1_{\{\rho^+_R<\rho_T\}}+\delta_i(\rho^+_T-\rho^+_S)g_i(X_{\rho_T})1_{\{\rho^+_R>\rho_T\}}\notag \\
&\hspace{3in}+\delta_i(\rho^+_R-\rho^+_S)h_i(X_{\rho^+_R})1_{\{\rho^+_R=\rho_T\}} \Big|\ \mathcal{F}_{\rho^+_S}\Big]\notag\\
&= 1_B \delta_i(\rho^+_S) J_i\Big({X^x_{\rho^+_S}}, \rho^+_R, \rho_T\Big),  \label{smaller}
\end{align} 
where the equality follows from  the strong Markov property of $X$. Thanks to the above inequality and the fact that $1_B F_i(\rho_S^+,\rho_T) = 1_B \delta_i(\rho^+_S) f_i(X^x_{\rho^+_S})=1_B \delta_i(\rho^+_S) J_i(X^x_{\rho^+_S},0,\rho_T)$, \eqref{1_B} implies
\begin{align}\label{1_B'}
\E_x\left[1_B(F_i(\rho^+_R,\rho_T)-F_i(\rho^+_S,\rho_T))\right]&\geq\E_x\left[1_B\delta_i(\rho^+_S)\left(J_i(X_{\rho^+_S},\rho^+_R,\rho_T)-J_i(X_{\rho^+_S},0,\rho_T)\right)\right].%\\
%&\geq 0,
\end{align}
On the set $B$, we deduce from $\rho^+_S<\rho^+_R$ that $X^x_{\rho^+_S}\notin R$. Since $R\in\cE_i^T$,  \eqref{Theta} implies $J_i(X_{\rho^+_S},0,\rho_T)\le J_i(X_{\rho^+_S},\rho^+_R,\rho_T)$. Then, \eqref{1_B'} yields $\E_x\left[1_B(F_i(\rho^+_R,\rho_T)-F_i(\rho^+_S,\rho_T))\right]\ge 0$. On strength of this and \eqref{1_A}, we conclude from \eqref{1_A,B} that $J_i(x,\rho^+_R,\rho_T)-J_i(x,\rho^+_S,\rho_T)\ge0$.
\end{proof}

There is an intriguing message here---a {\it smaller} intra-personal equilibrium is more {\it rewarding}. Indeed, for any $R,S\in\cE^T_i$ with $R\subseteq S$, Lemma~\ref{l5} asserts $J_i(x,\rho^+_R,\rho_T)\geq J_i(x,\rho^+_S,\rho_T)$ for all $x\in\X$. This ``ranking by size'' will play a crucial role in Theorem~\ref{t1} below, where an {\it optimal} intra-personal equilibrium is derived. Economically, this ranking simply reflects a player's decreasing impatience, which is captured by \eqref{DI}; see the detailed discussion below \cite[Corollary 3.2]{HZ20}.

%%%%%%%%%%%%%%%%%%%%%%%%%%%%%%%%%%%%%%%%%%%%%%
%%%%%%%%%%%%%%%%%%%%%%%%%%%%%%%%%%%%%%%%%%%%%%

\section{The One-Player Analysis}\label{sec:one-player}
In this section, we first focus on developing an iterative procedure for each player that directly leads to her intra-personal equilibrium. Next, under a supermartingale condition, we will establish the monotonicity of this iterative procedure. 

For each $i\in\{1,2\}$, we introduce, for any fixed $T\in\B$, the operator $\Phi_i^T:\B\to\B$ defined by 
\begin{equation}\label{Phi_i^T}
\Phi_i^T(S):=S\cup\{x\notin S: J_i(x,0,\rho_T)>V_i^T(x,S)\},
\end{equation}
where
\begin{equation}\label{V_i^T}
V_i^T(x,S):=\sup_{1\leq\tau\leq\rho^+_S}\E_x[F_i(\tau,\rho_T)]\quad \hbox{for any}\ x\in\X\ \hbox{and}\ S\in\B.
\end{equation}
%In \eqref{S_n} below, $\Phi_i^T:\B\to\B$ will be used to perform an iterative procedure that ultimately leads to an optimal intra-personal equilibrium for Player $i$.
We first note that $V_i^T(x,\cdot)$ converges desirably along nondecreasing sequences of stopping policies.

\begin{lemma}\label{l2}
Fix $i\in\{1,2\}$. % and suppose that 
%\begin{equation}\label{dominated}
%\E_x\bigg[\sup_{t\in\N}\delta_i(t)\big(f_i(X_t)+g_i(X_t)+h_i(X_t)\big)\bigg]<\infty\quad \forall x\in\X.
%\end{equation}
Let $(S_n)_{n\in\N}$ be a nondecreasing sequence in $\B$. Then, for any $T\in\B$, $V_i^T(x,S_n)\downarrow V_i^T(x,S_\infty)$ for all $x\in\X$, with $S_\infty:= \bigcup_{n\in\N} S_n$.
\end{lemma}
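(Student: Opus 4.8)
The plan is to show monotonicity and convergence separately. First I would establish that $n\mapsto V_i^T(x,S_n)$ is nonincreasing: since $(S_n)$ is nondecreasing, $S_n\subseteq S_{n+1}$ implies $\rho^+_{S_n}\geq\rho^+_{S_{n+1}}$ pathwise (it is harder to hit a smaller set), so the admissible set $\{1\leq\tau\leq\rho^+_{S_n}\}$ over which we take the supremum in \eqref{V_i^T} shrinks as $n$ grows, forcing $V_i^T(x,S_n)\geq V_i^T(x,S_{n+1})$. The same comparison gives $V_i^T(x,S_n)\geq V_i^T(x,S_\infty)$ for every $n$, because $\rho^+_{S_n}\geq\rho^+_{S_\infty}$. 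Hence the limit $L(x):=\lim_{n\to\infty}V_i^T(x,S_n)$ exists and satisfies $L(x)\geq V_i^T(x,S_\infty)$; it remains to prove the reverse inequality $L(x)\leq V_i^T(x,S_\infty)$.

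For the reverse inequality, fix $x$ and $\eps>0$. For each $n$, pick a near-optimal stopping time $\tau_n$ with $1\leq\tau_n\leq\rho^+_{S_n}$ and $\E_x[F_i(\tau_n,\rho_T)]\geq V_i^T(x,S_n)-\eps$. The natural move is to pass to a limit, but rather than wrestling with weak limits of stopping times I would instead invoke Lemma~\ref{l1}: applied to the nondecreasing sequence $(S_n)$, it says that for \emph{each fixed} $\omega$ there is $N(\omega)$ with $\rho^+_{S_n}(\omega)=\rho^+_{S_\infty}(\omega)$ for all $n\geq N(\omega)$. Define $\tau:=\tau_N$ on the event $\{N(\omega)=N\}\cap\{\rho^+_{S_\infty}>0\}$ suitably, or more cleanly: use that the constraint $\tau_n\leq\rho^+_{S_n}$ together with $\rho^+_{S_n}\downarrow\rho^+_{S_\infty}$ (which holds pathwise and \emph{stabilizes} by Lemma~\ref{l1}) lets us truncate. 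Concretely, set $\sigma_n:=\tau_n\wedge\rho^+_{S_\infty}$; since $\rho^+_{S_\infty}\leq\rho^+_{S_n}$ and both stabilize, $\sigma_n$ is an admissible competitor for $V_i^T(x,S_\infty)$. The point is to show $\E_x[F_i(\tau_n,\rho_T)]-\E_x[F_i(\sigma_n,\rho_T)]\to 0$: on $\{N(\omega)\leq n\}$ we have $\rho^+_{S_n}=\rho^+_{S_\infty}$, so $\tau_n\leq\rho^+_{S_\infty}$ and $\sigma_n=\tau_n$ there; the difference is supported on $\{N(\omega)>n\}$, which has probability tending to $0$, and on that event $|F_i(\tau_n,\rho_T)-F_i(\sigma_n,\rho_T)|$ is dominated by $2\sup_{t\in\N}\delta_i(t)(f_i(X_t)+g_i(X_t)+h_i(X_t))$, which is $\P_x$-integrable by \eqref{dominated}. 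Dominated convergence then kills the difference, giving $V_i^T(x,S_\infty)\geq\limsup_n\E_x[F_i(\sigma_n,\rho_T)]=\limsup_n\E_x[F_i(\tau_n,\rho_T)]\geq L(x)-\eps$. Letting $\eps\downarrow0$ finishes the proof.

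The main obstacle is the last step — handling the boundary case carefully and justifying that truncating $\tau_n$ at $\rho^+_{S_\infty}$ costs nothing in the limit. Two subtleties need care: first, one must confirm $\sigma_n\geq 1$ (admissibility of the lower bound), which may fail only when $\rho^+_{S_\infty}=0$, but $\rho^+_S>0$ always since $\rho^+$ is a first hitting time strictly after time $0$, so this is a non-issue; second, the integrability bound \eqref{dominated} must be applied to control $F_i$ on the shrinking bad event, and here one uses that $|F_i(\tau,\sigma)|\leq\sup_{t\in\N}\delta_i(t)(f_i(X_t)+g_i(X_t)+h_i(X_t))$ uniformly in $\tau,\sigma$, together with $\P_x(N(\omega)>n)\to0$ from Lemma~\ref{l1}. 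Everything else is a routine monotonicity-of-suprema argument.
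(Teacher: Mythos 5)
Your proposal is correct and follows essentially the same route as the paper: bound the gap by comparing the near-optimizer $\tau_n$ with its truncation $\tau_n\wedge\rho^+_{S_\infty}$ (admissible since $\rho^+_{S_\infty}\ge 1$), use Lemma~\ref{l1} to get pathwise stabilization $\rho^+_{S_n}=\rho^+_{S_\infty}$ for large $n$ so the integrands eventually coincide, and conclude by dominated convergence via \eqref{dominated}. The only cosmetic differences are your fixed-$\eps$ optimizers in place of the paper's $1/n$-optimizers and your explicit spelling-out of the monotonicity step.
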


\begin{proof}
Fix $x\in\X$. By the definition of $V_i^T$ in \eqref{V_i^T}, $\lim_{n\to\infty}V_i^T(x,S_n)\geq V_i^T(x,S_\infty)$. To show the converse inequality, let $\tau_n\in\T$ with $1\le \tau_n\le \rho^+_{S_n}$ be a $\frac{1}{n}$-optimizer of $V_i^T(x,S_n)$, for each $n\in\N$. Then,
\begin{align}
V_i^T(x,S_n)-V_i^T(x,S_\infty)&\leq \E_x[F_i(\tau_n,\rho_T)]+{1}/{n}-\E_x[F_i(\tau_n\wedge\rho^+_{S_\infty},\rho_T)]\notag\\
&= \E_x\left[F_i(\tau_n,\rho_T)-F_i(\tau_n\wedge\rho^+_{S_\infty},\rho_T)\right]+1/n.\label{V-V} %\\
%&\leq 2C\cdot\P\{\tau_n\neq\tau_n\wedge\rho^+_{S_\infty}\}+{1}/{n}.
\end{align}
For each $\omega\in\Omega$, \lemref{l1} asserts the existence of $N\in\N$ such that $\tau_n(\omega)\leq\rho^+_{S_n}(\omega)=\rho^+_{S_\infty}(\omega)$ for all $n\ge N$. In particular, $\tau_n(\omega)= (\tau_n\wedge\rho^+_{S_\infty})(\omega)$ for all $n\ge N$. This, together with \eqref{dominated}, indicates that we may apply the dominated convergence theorem to show that the expectation in \eqref{V-V} converges to $0$. Hence, we conclude from \eqref{V-V} that  $\lim_{n\to\infty}V^T(x,S_n)-V^T(x,S_\infty)\le0$.
\end{proof}

For any fixed $T\in\B$, we perform an iterative procedure by starting with the empty set and applying the operator $\Phi_i^T$ repetitively. This will give an intra-personal equilibrium for Player $i$.

\begin{proposition}\label{l3}
Fix $i\in\{1,2\}$ and assume $h_i\leq g_i$. For any $T\in\B$, let $\left(S_i^n(T)\right)_{n\in\N}$ be a nondecreasing sequence in $\B$ defined by
\begin{equation}\label{S_n}
%S_1^T := \Phi^T(\emptyset)\quad \hbox{and}\quad S^T_n := \Phi^T(S^T_{n-1})\ \ \hbox{for}\ n\in\N,
S_i^1(T) := \Phi_i^T(\emptyset)\quad \hbox{and}\quad S_i^n(T) := \Phi_i^T\left(S^{n-1}(T)\right)\ \ \hbox{for}\ n\ge 2,
\end{equation}
where $\Phi_i^T:\B\to\B$ is defined as in \eqref{Phi_i^T}. Then, 
\begin{equation}\label{S_infty}
%\widehat S^T:=\bigcup_{n\in\N} S^T_n\in\cE^T.
\Gamma_i(T):=\bigcup_{n\in\N} S_i^n(T)\in\cE_i^T.
\end{equation} 
Moreover, $T\cap S_i^n(T)=\emptyset$ for all $n\in\N$; in particular, $T\cap\Gamma_i(T)=\emptyset$.  
\end{proposition}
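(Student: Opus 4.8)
### Proof Proposal

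The plan is to verify the three assertions in turn: (i) that $\left(S_i^n(T)\right)_{n\in\N}$ is a well-defined nondecreasing sequence in $\B$; (ii) that its union $\Gamma_i(T)$ is a fixed point of $\Theta_i^T$; and (iii) that $T$ stays disjoint from every iterate. The monotonicity in (i) is essentially by construction: $\Phi_i^T(S)\supseteq S$ for every $S$ by \eqref{Phi_i^T}, so $S_i^1(T)=\Phi_i^T(\emptyset)\supseteq\emptyset$ and inductively $S_i^{n+1}(T)=\Phi_i^T(S_i^n(T))\supseteq S_i^n(T)$. Measurability of each iterate follows since $V_i^T(\cdot,S)$ and $J_i(\cdot,0,\rho_T)$ are Borel measurable (the latter being an expectation of a measurable payoff, the former a value function of a stopping problem over a set of stopping times bounded by $\rho_S^+$), so the set $\{x\notin S: J_i(x,0,\rho_T)>V_i^T(x,S)\}$ lies in $\B$.

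For (ii), the key is to show $\Theta_i^T(\Gamma_i(T))=\Gamma_i(T)$, where $\Gamma_i(T)=S_\infty:=\bigcup_n S_i^n(T)$. I would split into the two inclusions dictated by \eqref{Theta}. First, for $x\in S_\infty$: pick $n$ with $x\in S_i^n(T)$. The point to extract from the iteration is that a point once included is included because immediate stopping beats waiting to the \emph{next} hitting of the smaller set; combined with \lemref{l5} (smaller equilibria are more rewarding) and the fact that $\rho^+_{S_i^{n}(T)}\ge\rho^+_{S_\infty}$, this should give $J_i(x,0,\rho_T)\ge J_i(x,\rho^+_{S_\infty},\rho_T)$, i.e. $x\in\Theta_i^T(S_\infty)$. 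More carefully: by construction of $\Phi_i^T$, either $x\in\emptyset$ (impossible) or there is a first index $m\le n$ with $x\in S_i^m(T)\setminus S_i^{m-1}(T)$, forced by $J_i(x,0,\rho_T)>V_i^T(x,S_i^{m-1}(T))=\sup_{1\le\tau\le\rho^+_{S_i^{m-1}(T)}}\E_x[F_i(\tau,\rho_T)]\ge\E_x[F_i(\rho^+_{S_\infty},\rho_T)]=J_i(x,\rho^+_{S_\infty},\rho_T)$, using $\rho^+_{S_i^{m-1}(T)}\ge\rho^+_{S_\infty}$ and that $\rho^+_{S_\infty}\ge 1$. Hence $x\in\Theta_i^T(S_\infty)$. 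Second, for $x\notin S_\infty$: then $x\notin S_i^n(T)$ for all $n$, so $x\notin\Phi_i^T(S_i^n(T))$, meaning $J_i(x,0,\rho_T)\le V_i^T(x,S_i^n(T))$ for all $n$. By \lemref{l2}, $V_i^T(x,S_i^n(T))\downarrow V_i^T(x,S_\infty)$; but $V_i^T(x,S_\infty)=\sup_{1\le\tau\le\rho^+_{S_\infty}}\E_x[F_i(\tau,\rho_T)]$, and I would argue the supremum is attained (or approximated) by $\tau=\rho^+_{S_\infty}$ together with a one-step comparison — here \lemref{l5} again controls $V_i^T(x,S_\infty)$ against $J_i(x,\rho^+_{S_\infty},\rho_T)$, giving $J_i(x,0,\rho_T)\le J_i(x,\rho^+_{S_\infty},\rho_T)$ and hence $x\notin\Theta_i^T(S_\infty)$. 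Combining the two inclusions yields $\Theta_i^T(S_\infty)=S_\infty$, i.e. $\Gamma_i(T)\in\cE_i^T$.

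For (iii), I would prove $T\cap S_i^n(T)=\emptyset$ by induction on $n$. The base case $T\cap S_i^1(T)=T\cap\Phi_i^T(\emptyset)$: a point $x\in T$ can enter $\Phi_i^T(\emptyset)$ only if $J_i(x,0,\rho_T)>V_i^T(x,\emptyset)$, but $V_i^T(x,\emptyset)=\sup_{1\le\tau\le\rho^+_\emptyset}\E_x[F_i(\tau,\rho_T)]$; when $x\in T$ we have $\rho_T=0$ under $\P_x$, so $F_i(\tau,\rho_T)$ with $\tau\ge 1>\rho_T=0$ equals $\delta_i(0)g_i(X_0)=g_i(x)$, whereas $J_i(x,0,\rho_T)=h_i(x)\le g_i(x)$ by the standing hypothesis $h_i\le g_i$; thus the strict inequality fails and $x\notin\Phi_i^T(\emptyset)$. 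For the inductive step, $x\in T$ can enter $\Phi_i^T(S_i^n(T))$ only via $J_i(x,0,\rho_T)>V_i^T(x,S_i^n(T))$, and the same computation ($\rho_T=0$ under $\P_x$, any $\tau\ge1$ gives payoff $g_i(X_0)=g_i(x)$) shows $V_i^T(x,S_i^n(T))\ge g_i(x)\ge h_i(x)=J_i(x,0,\rho_T)$, so again $x$ is not added; and $x\notin S_i^n(T)$ by the inductive hypothesis. Hence $T\cap S_i^{n+1}(T)=\emptyset$, and taking the union gives $T\cap\Gamma_i(T)=\emptyset$.

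The main obstacle is the clean handling of the supremum defining $V_i^T(x,S_\infty)$ in step (ii): one must make sure that the limiting value $V_i^T(x,S_\infty)$ is indeed comparable to $J_i(x,\rho^+_{S_\infty},\rho_T)$ in the right direction on $\{x\notin S_\infty\}$, which is exactly where the disjointness from $T$ (step (iii)) and the hypothesis $h_i\le g_i$ feed back in — without $T\cap S_i^n(T)=\emptyset$ the value $V_i^T$ could be contaminated by the event $\{\tau=\rho_T\}$ and the one-step comparison would break. So I would establish (iii) before completing the second inclusion in (ii), and lean on \lemref{l5} and \lemref{l2} as the two analytic workhorses.
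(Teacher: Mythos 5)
Your treatment of the first inclusion ($\Gamma_i(T)\subseteq\Theta_i^T(\Gamma_i(T))$, via the first index $m$ with $x\in S_i^m(T)\setminus S_i^{m-1}(T)$ and $\rho^+_{S_i^{m-1}(T)}\ge\rho^+_{\Gamma_i(T)}$) and of the disjointness $T\cap S_i^n(T)=\emptyset$ (via $\rho_T=0$ $\P_x$-a.s.\ on $T$, so any $\tau\ge 1$ pays $g_i(x)\ge h_i(x)$) matches the paper's proof and is fine. The gap is in the converse inclusion. After correctly reducing, via \lemref{l2}, to $J_i(x,0,\rho_T)\le V_i^T(x,\Gamma_i(T))$ for $x\notin\Gamma_i(T)$, you assert that ``\lemref{l5} again controls $V_i^T(x,S_\infty)$ against $J_i(x,\rho^+_{S_\infty},\rho_T)$,'' i.e.\ that $V_i^T(x,\Gamma_i(T))\le J_i(x,\rho^+_{\Gamma_i(T)},\rho_T)$. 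This is unjustified, and the trivial inequality goes the opposite way: since $\rho^+_{\Gamma_i(T)}$ is itself admissible in the supremum \eqref{V_i^T}, one always has $V_i^T(x,\Gamma_i(T))\ge J_i(x,\rho^+_{\Gamma_i(T)},\rho_T)$, and for a general Borel set the inequality can be strict (stopping strictly before the hitting time can be better). \lemref{l5} cannot close this: it compares $J_i(\cdot,\rho^+_R,\rho_T)$ with $J_i(\cdot,\rho^+_S,\rho_T)$ for nested sets, not $V_i^T$ with $J_i$, and it requires $R\in\cE_i^T$ as a hypothesis — invoking it with $R=\Gamma_i(T)$ here would be circular, since $\Gamma_i(T)\in\cE_i^T$ is exactly what you are proving.

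This missing step is in fact the heart of the paper's proof, and it cannot be obtained by a soft comparison. The paper argues by contradiction: it assumes $\Delta:=\{y\notin\Gamma_i(T):J_i(y,0,\rho_T)>J_i(y,\rho^+_{\Gamma_i(T)},\rho_T)\}\neq\emptyset$, sets $\alpha$ to be the supremal gap over $\Delta$, picks $x^*\in\Delta$ whose gap exceeds $\tfrac{1+\delta_i(1)}{2}\alpha$ (note $x^*\notin T$ by $h_i\le g_i$), takes a $\tfrac{1-\delta_i(1)}{2}\alpha$-optimizer $\rho^*\ge 1$ of $V_i^T(x^*,\Gamma_i(T))$, and then uses the strong Markov property together with the decreasing-impatience condition \eqref{DI} and $\delta_i(1)<1$ to bound the gain from $\rho^*$ by $\delta_i(1)\alpha$, yielding the contradiction $\tfrac{1+\delta_i(1)}{2}\alpha<\tfrac{1+\delta_i(1)}{2}\alpha$. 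Your proposal never invokes \eqref{DI}, the strong Markov property, or any quantitative argument of this type, so as written it does not establish $\Theta_i^T(\Gamma_i(T))\subseteq\Gamma_i(T)$; you would need to supply an argument of the above kind (or an equivalent verification-type bound showing stopping before $\rho^+_{\Gamma_i(T)}$ is never strictly better off $\Gamma_i(T)$) to complete the proof.
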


\begin{proof}
Fix $x\in \Gamma_i(T)$. If $x\in S_i^1(T)= \Phi_i^T(\emptyset)$, by \eqref{Phi_i^T} we have $J_i(x,0,\rho_T)>V_i^T(x,\emptyset)\geq V_i^T(x,\Gamma_i(T))\geq J_i(x,\rho^+_{\Gamma_i(T)},\rho_T)$, which implies $x\in\Theta_i^T(\Gamma_i(T))$. If $x\notin S_i^1(T)$, since $(S_i^n(T))_{n\in\N}$ is by definition nondecreasing, there must exist $n\in\N$ such that $x\in S_i^{n+1}(T)\setminus S_i^n(T)$. Thanks again to \eqref{Phi_i^T}, this yields $J_i(x,0,\rho_T)>V_i^T(x,S_i^n(T))\geq V_i^T(x,\Gamma_i(T))\geq J_i(x,\rho^+_{\Gamma_i(T)},\rho_T)$, which implies $x\in\Theta_i^T(\Gamma_i(T))$. Hence, we conclude $\Gamma_i(T)\subseteq\Theta_i^T(\Gamma_i(T))$. 

It remains to show the converse inclusion. Fix $x\notin \Gamma_i(T)$. We claim that $x\notin \Theta_i^T(\Gamma_i(T))$, i.e. 
%Hence, in view of \eqref{Theta}, to show that $S_\infty\in\cE^T$, it suffices to show that for any $x\notin S$,
\begin{equation}\label{e1}
J_i(x,0,\rho_T)\leq J_i(x,\rho^+_{\Gamma_i(T)},\rho_T).
\end{equation}
Assume to the contrary that \eqref{e1} fails, so that 
\begin{equation}\label{Delta}
\Delta:=\{y\notin \Gamma_i(T): J_i(y,0,\rho_T)>J_i(y,\rho^+_{\Gamma_i(T)},\rho_T)\} \neq\emptyset. %\quad \hbox{and}\quad
%\alpha:=\sup_{y\in\Delta}\left[J(y,0,\rho_T)-J(y,\rho^+_{S_\infty},\rho_T)\right]>0.
\end{equation}
Consider
\begin{equation}\label{alpha}
\alpha:=\sup_{y\in\Delta}\left\{J_i(y,0,\rho_T)-J_i(y,\rho^+_{\Gamma_i(T)},\rho_T)\right\}>0.
\end{equation}
As $\delta_i(1)<\delta_i(0)=1$, we can take $x^*\in\Delta$ such that
\begin{equation}\label{e2}
J_i(x^*,0,\rho_T)-J_i(x^*,\rho^+_{\Gamma_i(T)},\rho_T)>\frac{1+\delta_i(1)}{2}\alpha.
\end{equation}
Observe that we must have $x^*\notin T$, and thus $\rho_T>0$ $\P_{x^*}$-a.s. Indeed, if $x^*\in T$, then $J_i(x^*,0,\rho_T)-J_i(x^*,\rho^+_{\Gamma_i(T)},\rho_T)=h_i(x)-g_i(x)\leq 0$, which contradicts \eqref{e2}. Moreover, since $x^*\notin \Gamma_i(T)$ implies $x^*\notin S_i^{n}(T)$ for all $n\in\N$, we deduce from \eqref{Phi_i^T} that $J_i(x^*,0,\rho_T)\leq V_i^T(x^*,S_i^n(T))$ for all $n\in\N$. By \lemref{l2}, this implies 
\begin{equation}\label{J<V^T}
J_i(x^*,0,\rho_T)\leq V_i^T(x^*,\Gamma_i(T)).
\end{equation}
Let $\rho^*\in\T$ with $1\le \rho^*\le \rho^+_{\Gamma_i(T)}$ be a $\frac{1-\delta_i(1)}{2}\alpha$-optimizer of $V_i^T(x^*,\Gamma_i(T))$. Consider the sets
\begin{align*}
A&:=\{\omega\in\Omega: \rho^*(\omega)<\rho^+_{\Gamma_i(T)}(\omega),\ \rho^*(\omega)\leq\rho_T(\omega),\ X_{\rho^*}(\omega)\notin(\Gamma_i(T)\cup\Delta)\},\\
B&:=\{\omega\in\Omega: \rho^*(\omega)<\rho^+_{\Gamma_i(T)}(\omega),\ \rho^*(\omega)\leq\rho_T(\omega),\ X_{\rho^*}(\omega)\in\Delta\setminus \Gamma_i(T)\}.
\end{align*}
By \eqref{e2} and \eqref{J<V^T}, 
\begin{align}
\frac{1+\delta_i(1)}{2}\alpha&<J_i(x^*,0,\rho_T)-J_i(x^*,\rho^+_{\Gamma_i(T)},\rho_T)\notag\\
&\leq\E_{x^*}\left[F_i(\rho^*,\rho_T)-F_i(\rho^+_{\Gamma_i(T)},\rho_T)\right]+\frac{1-\delta_i(1)}{2}\alpha\notag\\
&=\E_{x^*}\left[(1_A+1_B)(F_i(\rho^*,\rho_T)-F_i(\rho^+_{\Gamma_i(T)},\rho_T))\right]+\frac{1-\delta_i(1)}{2}\alpha.\label{11}%\\
%&\leq\E_{x^*}\left[(1_A+1_B)\delta(\rho^*)(J(X_{\rho^*},0,\rho_T)-J(X_{\rho^*},\rho^+_{S_\infty},\rho_T))\right]+\frac{1-\delta(1)}{2}\alpha\\
%&\leq\E_{x^*}\left[1_B\delta(\rho^*)(J(X_{\rho^*},0,\rho_T)-J(X_{\rho^*},\rho^+_{S_\infty},\rho_T))\right]+\frac{1-\delta(1)}{2}\alpha\\
%&\leq\delta(1)\alpha+\frac{1-\delta(1)}{2}\alpha\\
%&=\frac{1+\delta(1)}{2}\alpha.
\end{align}
By \eqref{DI} and the nonnegativity of $f_i$, $g_i$, and $h_i$, we can argue as in \eqref{smaller} to get
\begin{align*}
\E_{x^*}\left[(1_A+1_B) F_i(\rho^+_{\Gamma_i(T)},\rho_T)\right] &= \E_{x^*}\left[(1_A+1_B)\E_{x^*}\left[ F_i(\rho^+_{\Gamma_i(T)},\rho_T)\ \middle|\ \F_{\rho^*}\right]\right]\\
&\ge \E_{x^*}\left[(1_A+1_B) \delta_i(\rho^*) J_i(X_{\rho^*},\rho^+_{\Gamma_i(T)},\rho_T)\right].   
\end{align*}
Thanks to this and the fact that $(1_{A}+1_B) F_i(\rho^*,\rho_T) = (1_{A}+1_B) \delta_i(\rho^*) J_i(X_{\rho^*}, 0,\rho_T)$, \eqref{11} yields
\begin{align*}
\frac{1+\delta_i(1)}{2}\alpha&<\E_{x^*}\left[(1_A+1_B)\delta_i(\rho^*)(J_i(X_{\rho^*},0,\rho_T)-J_i(X_{\rho^*},\rho^+_{\Gamma_i(T)},\rho_T))\right]+\frac{1-\delta_i(1)}{2}\alpha\\
&\leq\E_{x^*}\left[1_B\delta_i(\rho^*)(J_i(X_{\rho^*},0,\rho_T)-J_i(X_{\rho^*},\rho^+_{\Gamma_i(T)},\rho_T))\right]+\frac{1-\delta_i(1)}{2}\alpha\\
&\leq\delta_i(1)\alpha+\frac{1-\delta_i(1)}{2}\alpha=\frac{1+\delta_i(1)}{2}\alpha. 
\end{align*}
where the second inequality follows from $X_{\rho^*}\notin \Delta$ on $A$ and the definition of $\Delta$ in \eqref{Delta}, and the third inequality is due to $X_{\rho^*}\in \Delta$ on $B$, the definition of $\alpha$ in \eqref{alpha}, and $\rho^*\ge 1$ by definition.  The above inequality is clearly a contradiction, and we thus conclude that \eqref{e1} holds. That is, we have shown that $(\Gamma_i(T))^c \subseteq (\Theta(\Gamma_i(T)))^c$, or simply $\Gamma_i(T) \supseteq \Theta(\Gamma_i(T))$. This brings the final conclusion that $\Gamma_i(T)=\Theta_i^T(\Gamma_i(T))$, i.e. $\Gamma_i(T)\in\cE_i^T$.

Finally, for any $x\in T$, since $\rho_T=0$ $\P_x$-a.s., 
\begin{equation}\label{not in S^n}
J_i(x,0,\rho_{T}) = h_i(x) \le g_i(x)=\sup_{1\le \tau\le \rho^+_{R}} \E_x[F_i(\tau,\rho_T)]  = V_i^{T}(x,R)\quad \forall R\in\B.
\end{equation}
In view of \eqref{S_n}, taking $R=\emptyset$ in \eqref{not in S^n} shows that $x\notin S_i^1(T)$. By the fact $x\notin S_i^1(T)$ and taking $R=S_i^1(T)$ in \eqref{not in S^n}, we in turn obtain $x\notin S_i^2(T)$. Applying \eqref{not in S^n} recursively in the same way then gives $x\notin S_i^n(T)$ for all $n\in\N$. Hence, we conclude $T\cap S_i^n(T)$ for all $n\in\N$. 
\end{proof}

\begin{remark}
The idea of the iterative procedure \eqref{S_n} was initially inspired by \cite[Section 3]{CL18}, while its specific construction is partially borrowed from \cite[Theorem 2.2]{https://doi.org/10.1111/mafi.12293}.
\end{remark}

We will go one step further to claim that $\Gamma_i(T)$ in \eqref{S_infty} is in fact an {\it optimal} intra-personal equilibrium for Player $i$. To this end, we need the following auxiliary result: Being included in an intra-personal equilibrium is an invariant relation under the operator $\Phi^T_i$.

\begin{lemma}\label{l4}
Fix $i\in\{1,2\}$, $T\in\B$, and $R\in\cE_i^T$. For any $S\in \B$ with $S\subseteq R$, $\Phi^T_i(S)\subseteq R$.
\end{lemma}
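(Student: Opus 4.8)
The plan is to argue at the level of points and establish the contrapositive: if $x\notin R$, then $x\notin\Phi_i^T(S)$. Since $S\subseteq R$ by hypothesis, $x\notin R$ in particular forces $x\notin S$, so in view of the definition \eqref{Phi_i^T} of $\Phi_i^T$ it suffices to check that such an $x$ violates the strict inequality defining the second set in \eqref{Phi_i^T}, i.e. that
\[
J_i(x,0,\rho_T)\le V_i^T(x,S).
\]

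First I would record the elementary monotonicity of hitting times: since $S\subseteq R$, every entrance of $X$ into $R$ at a strictly positive time occurs no later than the corresponding entrance into $S$, whence $1\le\rho^+_R\le\rho^+_S$ pathwise (the lower bound being immediate since $\rho^+_R=\inf\{t>0:X_t\in R\}$ ranges over positive integers or equals $+\infty$). Consequently $\rho^+_R$ is an admissible competitor in the constrained supremum \eqref{V_i^T} defining $V_i^T(x,S)$, and therefore
\[
J_i(x,\rho^+_R,\rho_T)=\E_x\!\left[F_i(\rho^+_R,\rho_T)\right]\le V_i^T(x,S).
\]

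Next I would invoke the equilibrium property of $R$. Since $R\in\cE_i^T$ means $\Theta_i^T(R)=R$, the explicit description \eqref{Theta} of $\Theta_i^T(R)$ shows that a point $x\notin R=\Theta_i^T(R)$ must fail the strict inequality in the second set of \eqref{Theta}, i.e. $J_i(x,0,\rho_T)\le J_i(x,\rho^+_R,\rho_T)$. Chaining this with the previous display yields $J_i(x,0,\rho_T)\le J_i(x,\rho^+_R,\rho_T)\le V_i^T(x,S)$, so $x\notin\{y\notin S:J_i(y,0,\rho_T)>V_i^T(y,S)\}$; combined with $x\notin S$ this gives $x\notin\Phi_i^T(S)$, which is exactly what we wanted.

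There is essentially no serious obstacle here: the argument is a direct unwinding of the definitions \eqref{Theta}, \eqref{Phi_i^T}, \eqref{V_i^T}. The only two points that need (routine) care are the pathwise inequality $\rho^+_R\le\rho^+_S$ coming from $S\subseteq R$, and the observation that $\rho^+_R\ge 1$ so that $\rho^+_R$ genuinely competes in the constrained supremum $\sup_{1\le\tau\le\rho^+_S}$ — both immediate from the definition of first hitting times. If one wanted to package this more slickly, one could simply say: $V_i^T(x,S)\ge J_i(x,\rho^+_R,\rho_T)\ge J_i(x,0,\rho_T)$ for every $x\notin R$, the first inequality by $1\le\rho^+_R\le\rho^+_S$ and the second by $R\in\cE_i^T$ via \eqref{Theta}.
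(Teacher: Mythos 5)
Your proof is correct and follows essentially the same argument as the paper: the paper argues by contradiction and chains $J_i(x,0,\rho_T)>V_i^T(x,S)\ge V_i^T(x,R)\ge J_i(x,\rho^+_R,\rho_T)$ to contradict $\Theta_i^T(R)=R$, while you state the contrapositive and use $\rho^+_R$ directly as a competitor in the supremum defining $V_i^T(x,S)$ — the same inequalities in the same order. No gaps.
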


\begin{proof}
By contradiction, suppose that there exists $x\in\Phi_i^T(S)\setminus R$ for some $S\in \B$ with $S\subseteq R$. With $x\in\Phi_i^T(S)$ but $x\notin S$, \eqref{Phi_i^T} gives
$
J_i(x,0,\rho_T)> V_i^T(x,S)\geq V_i^T(x,R)\geq J_i(x,\rho^+_R,\rho_T),
$
where the second and third inequalities follow directly from the definition of $V_i^T$ in \eqref{V_i^T}. As $x\notin R$, this shows that $x\in \Theta_i^T(R)$, from which we conclude $\Theta_i^T(R)\neq R$. This contradicts $R\in\cE_i^T$. 
\end{proof}

Now, we are ready to present the main result of this section.

\begin{theorem}\label{t1}
Fix $i\in\{1,2\}$ and assume $h_i\leq g_i$. For any $T\in\B$, $\Gamma_i(T)$ defined in \eqref{S_infty} belongs to $\widehat\cE_i^T$.
%is an optimal equilibrium w.r.t. $T$.
\end{theorem}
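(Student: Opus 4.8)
The plan is to show that $\Gamma_i(T)$ is in fact the \emph{smallest} element of $\cE_i^T$, and then invoke the ``smaller is more rewarding'' principle from Lemma~\ref{l5} to conclude optimality. Since Proposition~\ref{l3} already gives $\Gamma_i(T)\in\cE_i^T$, the only thing left for Definition~\ref{def:optimal E} is the domination $U_i^T(x,\Gamma_i(T))\ge U_i^T(x,R)$ for every $R\in\cE_i^T$ and every $x\in\X$.

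First I would fix an arbitrary $R\in\cE_i^T$ and prove $\Gamma_i(T)\subseteq R$. This follows by induction from Lemma~\ref{l4}: the base case is $S_i^1(T)=\Phi_i^T(\emptyset)\subseteq R$ since $\emptyset\subseteq R$; and if $S_i^{n-1}(T)\subseteq R$ then $S_i^n(T)=\Phi_i^T(S_i^{n-1}(T))\subseteq R$ by Lemma~\ref{l4} again. Taking the union over $n$ gives $\Gamma_i(T)=\bigcup_{n\in\N}S_i^n(T)\subseteq R$.

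Next, with $\Gamma_i(T)\subseteq R$ and $\Gamma_i(T)\in\cE_i^T$, I would apply Lemma~\ref{l5} (whose hypothesis $h_i\le g_i$ is exactly the standing assumption of the theorem), with the lemma's ``$R$'' taken to be $\Gamma_i(T)$ and its ``$S$'' taken to be $R$. This yields $J_i(x,\rho^+_{\Gamma_i(T)},\rho_T)\ge J_i(x,\rho^+_{R},\rho_T)$ for all $x\in\X$. Taking the maximum with the common term $J_i(x,0,\rho_T)$ on both sides gives
\[
U_i^T(x,\Gamma_i(T))=J_i(x,0,\rho_T)\vee J_i(x,\rho^+_{\Gamma_i(T)},\rho_T)\ \ge\ J_i(x,0,\rho_T)\vee J_i(x,\rho^+_{R},\rho_T)=U_i^T(x,R)
\]
for all $x\in\X$. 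Since $R\in\cE_i^T$ was arbitrary, $\Gamma_i(T)\in\widehat\cE_i^T$.

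This argument is essentially immediate once Lemmas~\ref{l4} and~\ref{l5} are in hand, so there is no real obstacle in the proof of Theorem~\ref{t1} itself; the substantive work has already been absorbed into Proposition~\ref{l3} (well-definedness of $\Gamma_i(T)$ as an equilibrium), Lemma~\ref{l4} (invariance of ``being contained in an equilibrium''), and Lemma~\ref{l5} (monotonicity of payoff in the size of the equilibrium). If I had to flag a point to be careful about, it is simply to make sure the induction in the first step does not secretly need $R=\Gamma_i(T)$ — it does not, Lemma~\ref{l4} only needs $R\in\cE_i^T$ and $S\subseteq R$ — and to make sure the indexing convention $S_i^n(T)=\Phi_i^T(S_i^{n-1}(T))$ from \eqref{S_n} is matched correctly when applying Lemma~\ref{l4}.
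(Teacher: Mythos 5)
Your proposal is correct and follows essentially the same route as the paper's own proof: establish $\Gamma_i(T)\subseteq R$ for every $R\in\cE_i^T$ by applying Lemma~\ref{l4} recursively starting from $\emptyset$, then invoke Lemma~\ref{l5} (with its ``$R$'' being $\Gamma_i(T)$ and its ``$S$'' being $R$) to get $J_i(x,\rho^+_{\Gamma_i(T)},\rho_T)\ge J_i(x,\rho^+_R,\rho_T)$ and hence the domination of value functions. Your care about the roles of the sets in Lemma~\ref{l5} and the indexing in \eqref{S_n} is exactly right, and nothing further is needed.
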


\begin{proof}
By \propref{l3}, $\Gamma_i(T)\in \cE_i^T$. Recall $(S_i^n(T))_{n\in\N}$ defined in \eqref{S_n}. For any $R\in\cE_i^T$, \lemref{l4} directly implies $S_i^1(T)=\Phi_i^T(\emptyset) \subseteq R$. By applying \lemref{l4}  recursively, we get $S_i^n(T)=\Phi_i^T(S_i^{n-1}(T)) \subseteq R$ for all $n\ge 2$. Hence, $\Gamma_i(T)=\bigcup_{n\in\N} S_i^n(T)\subseteq R$. This, together with  \lemref{l5}, gives $J_i(x,\rho^+_{\Gamma_i(T)},\rho_T)\geq J_i(x,\rho^+_R,\rho_T)$, and thus $U_i^T(x,\Gamma_i(T))\ge U_i^T(x,R)$, for all $x\in\X$. As $R\in\cE_i^T$ is arbitrarily chosen, we conclude $\Gamma_i(T)\in \widehat \cE_i^T$.  
\end{proof}

%\begin{remark}
%The optimal equilibrium has the property that
%$$\begin{cases}
%U(x,0,\sigma)>U(x,\rho^+_{\Gamma_i(T)},\sigma),&x\in \Gamma_i(T),\\
%U(x,0,\sigma)\leq U(x,\rho^+_{\Gamma_i(T)},\sigma),&x\notin \Gamma_i(T).
%\end{cases}$$
%This is different than equilibriums defined in \cite{HZ19}, in which
%\begin{equation}\label{e3}
%\begin{cases}
%U(x,0,\sigma)\geq U(x,\rho^+(S),\sigma),&x\in S,\\
%U(x,0,\sigma)< U(x,\rho^+(S),\sigma),&x\notin S.
%\end{cases}
%\end{equation}
%Moreover, under \defref{def:E} optimal equilibrium may not be unique (see e.g., \cite{HZ20}), while in \cite{HZ19} with equilibrium defined like \eqref{e3}, an optimal equilibrium is unique.
%\end{remark}

\subsection{Monotonicity with respect to $T\in\B$}
So far, we have fixed $T\in\B$ (the other player's stopping policy) and constructed a corresponding optimal intra-personal equilibrium $\Gamma_i(T)$ in \eqref{S_infty}. By viewing $T\in\B$ as a variable, we will show that the map $T\mapsto\Gamma_i(T)$ is monotone under appropriate conditions. 
 
\begin{lemma}\label{l6}
Fix $i\in\{1,2\}$. Assume $f_i\leq h_i\leq g_i$ and that 
\begin{equation}\label{supermartingale}
(\delta_i(t)g_i(X_t^x))_{t\geq 0}\ \  \hbox{is a supermartingale for all $x\in\X$}. 
\end{equation}
%\begin{itemize}
%\item [(i)] $J(x,\tau,\rho_T)\leq J(x,\tau,\rho_R)$ for all $x\in\X$ and $\tau\in\mathcal{T}$.
Then, for any $T, R\in\B$ with $T\subseteq R$, 
\begin{equation}\label{J<J}
J_i(x,\tau,\rho_T)\leq J_i(x,\tau,\rho_R)\quad\forall x\in\X\ \hbox{and}\ \tau\in\mathcal{T}.
\end{equation}
Hence, $V_i^T(x,S)\leq V_i^R(x,S)$ for all $x\in\mathbb{X}$ and $S\in\B$.
%\item [(ii)] 
Moreover, we have
%$\Phi^T(S)\supseteq \Phi^R(S')$ for all $S, S'\in\B$ with $S\supseteq S'.$ 
\begin{equation}\label{Phi inclusion}
\Phi_i^T(S)\supseteq \Phi_i^R(S')\quad \forall S, S'\in\B\ \hbox{with}\ S\supseteq S'. 
\end{equation}
%\end{itemize}
\end{lemma}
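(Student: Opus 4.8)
The plan is to establish the three assertions of Lemma~\ref{l6} in sequence, with \eqref{J<J} being the crux from which everything else follows quickly.

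First I would prove \eqref{J<J}. Fix $x\in\X$ and $\tau\in\T$, and fix $T\subseteq R$, so that $\rho_R\le\rho_T$ pointwise. The difference $F_i(\tau,\rho_T)-F_i(\tau,\rho_R)$ is supported on the event where $\rho_R<\rho_T$ changes which of the three regimes in \eqref{F} is active. The relevant partition of $\{\rho_R\le\rho_T\}$ (intersected with the cases where $\rho_R$ matters) is: (i) $\{\rho_R<\tau\}$, where the left side contributes $\delta_i(\rho_R)g_i(X_{\rho_R})$ and the right side contributes either $\delta_i(\tau)f_i(X_\tau)$ (if also $\tau<\rho_T$) or $\delta_i(\rho_T)g_i(X_{\rho_T})$ (if $\tau>\rho_T$) or $\delta_i(\tau)h_i(X_\tau)$ (if $\tau=\rho_T$); (ii) $\{\rho_R=\tau<\rho_T\}$, where the switch is from $\delta_i(\tau)h_i(X_\tau)$ (in $J_i(\cdot,\tau,\rho_R)$) to $\delta_i(\tau)f_i(X_\tau)$ (in $J_i(\cdot,\tau,\rho_T)$). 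In regime (ii) the inequality $f_i\le h_i$ gives the desired sign immediately. In regime (i), I would condition on $\F_{\rho_R}$ and use the supermartingale property \eqref{supermartingale}: on $\{\rho_R<\tau\}$ the quantity $\delta_i(\rho_R)g_i(X_{\rho_R})$ dominates $\E_x[\delta_i(\rho_T)g_i(X_{\rho_T})\mid\F_{\rho_R}]$ by optional sampling for supermartingales (bounded stopping times, or pass to the limit using \eqref{dominated} as the dominating integrable envelope), and it also dominates $\E_x[\delta_i(\tau)h_i(X_\tau)\mid\F_{\rho_R}]$ since $h_i\le g_i$ and $(\delta_i(t)g_i(X_t))$ is a supermartingale, and likewise dominates $\E_x[\delta_i(\tau)f_i(X_\tau)\mid\F_{\rho_R}]$ since $f_i\le h_i\le g_i$. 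Summing up the regimes and taking $\E_x$ yields $J_i(x,\tau,\rho_T)\le J_i(x,\tau,\rho_R)$.

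The monotonicity $V_i^T(x,S)\le V_i^R(x,S)$ is then immediate: take the supremum of \eqref{J<J} over $1\le\tau\le\rho_S^+$, noting that the constraint set for $\tau$ does not depend on $T$ or $R$. Finally, for \eqref{Phi inclusion}, let $S\supseteq S'$ and take $x\in\Phi_i^R(S')$. If $x\in S'\subseteq S$, then $x\in\Phi_i^T(S)$ by \eqref{Phi_i^T}. If $x\notin S'$, then \eqref{Phi_i^T} gives $J_i(x,0,\rho_R)>V_i^R(x,S')$; I would combine $J_i(x,0,\rho_T)\ge$ \emph{no---} here I must be careful about the direction, since $J_i(x,0,\rho_T)\le J_i(x,0,\rho_R)$ goes the wrong way. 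Instead, note $J_i(x,0,\rho_R)=h_i(x)$ or $f_i(x)$ depending only on whether $x\in R$; but actually $J_i(x,0,\rho_\cdot)$ is the payoff of stopping immediately, which equals $\delta_i(0)f_i(x)=f_i(x)$ when $x\notin R$ (since then $0=\rho_0<\rho_R$... wait, $x\notin R$ means $\rho_R>0$, so $\tau=0<\rho_R$ and the payoff is $f_i(x)$) — so $J_i(x,0,\rho_T)=J_i(x,0,\rho_R)=f_i(x)$ whenever $x\notin R$ (hence $x\notin T$). If instead $x\in R\setminus S'$... hmm, then $x$ could be in $T$ or not. Let me restructure: if $x\notin S'$ and $x\in\Phi_i^R(S')$, then $J_i(x,0,\rho_R)>V_i^R(x,S')\ge V_i^T(x,S')\ge V_i^T(x,S)$ using \eqref{V_i^T} monotonicity in the policy argument (larger $S$ gives smaller $\rho_S^+$... no, larger $S$ gives \emph{smaller} $\rho_S^+$, hence a \emph{smaller} sup) — wait, $S\supseteq S'$ gives $\rho_S^+\le\rho_{S'}^+$, so the sup over $1\le\tau\le\rho_S^+$ is over a \emph{smaller} set, giving $V_i^T(x,S)\le V_i^T(x,S')$. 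Good, that direction is right. So $J_i(x,0,\rho_R)>V_i^T(x,S)$. Now if $x\notin R$, then as noted $J_i(x,0,\rho_T)=J_i(x,0,\rho_R)$, so $J_i(x,0,\rho_T)>V_i^T(x,S)$, whence $x\in\Phi_i^T(S)$. If $x\in R$ but $x\notin S'$: then... this case needs the observation that $R\in\B$ is arbitrary, so actually I should just directly argue $J_i(x,0,\rho_T)=h_i(x)\mathbf{1}_{\{x\in T\}}+f_i(x)\mathbf{1}_{\{x\notin T\}}$ and similarly for $R$; since $f_i\le h_i$, we get $J_i(x,0,\rho_R)\le h_i(x)$ always, which is the wrong direction again. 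The resolution: the only genuinely needed case is $x\notin S$, and if $x\in R\setminus S'$ with $x\notin S$ then in particular $x\notin S\supseteq S'$ is consistent; but if additionally $x\in T$ then $J_i(x,0,\rho_T)=h_i(x)=V_i^T(x,S)$ by \eqref{not in S^n}-type reasoning, contradicting strictness — so such $x$ cannot be in $\Phi_i^T(S)$ \emph{nor} should it be in $\Phi_i^R(S')$, and indeed $x\in T\subseteq R$ forces $J_i(x,0,\rho_R)=h_i(x)=V_i^R(x,S')$, not strictly greater, so $x\notin\Phi_i^R(S')$, excluding this case. Thus the remaining case is $x\in R\setminus(S'\cup T)$, where $J_i(x,0,\rho_T)=f_i(x)=J_i(x,0,\rho_R)>V_i^T(x,S)$, giving $x\in\Phi_i^T(S)$. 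This completes \eqref{Phi inclusion}.

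The main obstacle is the optional-sampling argument inside regime (i) of step one: one must justify applying the supermartingale inequality at the possibly-unbounded stopping times $\rho_R$, $\rho_T$, $\tau$. I expect this to be handled exactly as in the proof of \lemref{l5} — condition on $\F_{\rho_R}$, invoke the strong Markov property to reduce to an initial-state statement, and use \eqref{dominated} to license dominated convergence when truncating the stopping times at level $n\to\infty$. The bookkeeping of which of the three regimes of $F_i$ is active on each piece of the partition is tedious but routine, and the sign in each piece is forced by exactly one of the hypotheses $f_i\le h_i\le g_i$ together with \eqref{supermartingale}.
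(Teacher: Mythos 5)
Your treatment of \eqref{J<J} and of the monotonicity $V_i^T(x,S)\le V_i^R(x,S)$ is correct and essentially the paper's argument: the difference $F_i(\tau,\rho_T)-F_i(\tau,\rho_R)$ is supported on $\{\tau=\rho_R<\rho_T\}$ (where $f_i\le h_i$ gives the sign) and $\{\rho_R<\tau\wedge\rho_T\}$ (where $F_i(\tau,\rho_T)\le \delta_i(\tau\wedge\rho_T)g_i(X_{\tau\wedge\rho_T})$ by $f_i\le h_i\le g_i$, and optional sampling of the nonnegative supermartingale $(\delta_i(t)g_i(X_t))$ at $\rho_R\le\tau\wedge\rho_T$ finishes), with \eqref{dominated} covering the unbounded-time issue.

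The problem is in your last case for \eqref{Phi inclusion}. For $x\in\Phi_i^R(S')\setminus S'$ with $x\in R\setminus T$ you write $J_i(x,0,\rho_T)=f_i(x)=J_i(x,0,\rho_R)>V_i^T(x,S)$; but $x\in R$ forces $\rho_R=0$ $\P_x$-a.s., so $J_i(x,0,\rho_R)=h_i(x)$, not $f_i(x)$ (you state this correctly earlier in the same paragraph), and the chain breaks at its middle equality. The missing observation---and the way the paper closes this case---is that $x\in R$ simply cannot happen: if $x\in R$, then $\rho_R=0$ $\P_x$-a.s., so every admissible $\tau\ge 1$ yields $F_i(\tau,\rho_R)=g_i(x)$, hence $V_i^R(x,S')=g_i(x)\ge h_i(x)=J_i(x,0,\rho_R)$, contradicting the strict inequality $J_i(x,0,\rho_R)>V_i^R(x,S')$ that put $x$ in $\Phi_i^R(S')\setminus S'$ in the first place. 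You run exactly this exclusion when $x\in T$, but it uses only $x\in R$ (and note that there $V_i^R(x,S')$ equals $g_i(x)$, so your ``$h_i(x)=V_i^R(x,S')$'' should be ``$h_i(x)\le V_i^R(x,S')$''). Once $x\notin R$, hence $x\notin T$, your earlier branch $J_i(x,0,\rho_T)=f_i(x)=J_i(x,0,\rho_R)>V_i^R(x,S')\ge V_i^T(x,S)$ applies, and together with the trivial case $x\in S$ this proves \eqref{Phi inclusion}. So the result is recoverable with a one-line repair, but as written your final case rests on a false identity rather than on showing the case is vacuous.
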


\begin{proof}
Given $x\in\X$ and $\tau\in\mathcal{T}$, consider $A:=\{\omega\in\Omega:\tau=\rho_R<\rho_T\}$ and $B:=\{\omega\in\Omega:\rho_R<\tau\wedge\rho_T\}.$
Observe that
\begin{align*}
\E_x\left[1_B(F_i(\tau,\rho_T)-F_i(\tau,\rho_R))\right]&=\E_x\left[1_B(F_i(\tau,\rho_T)-\delta_i(\rho_R)g_i(X_{\rho_R}))\right]\\
&\leq\E_x\left[1_B(\delta_i(\tau\wedge\rho_T)g_i(X_{\tau\wedge\rho_T})-\delta_i(\rho_R)g_i(X_{\rho_R}))\right]\\
&=\E_x\left[1_B\left(\E\left[\delta_i(\tau\wedge\rho_T)g_i(X_{\tau\wedge\rho_T})\Big|\mathcal{F}_{\rho_R}\right]-\delta_i(\rho_R)g_i(X_{\rho_R})\right)\right]\leq 0,
\end{align*}
where the first inequality is due to $f_i\leq h_i\leq g_i$ and the last inequality follows from $(\delta_i(t)g_i(X_t))_{t\geq 0}$ being a supermartingale. By the above inequality and %noting that $f_i\le h_i$, we get
\[
\E_x\left[1_A(F_i(\tau,\rho_T)-F_i(\tau,\rho_R))\right]=\E_x\left[1_A(\delta_i(\tau)f_i(X_\tau)-\delta_i(\tau)h_i(X_\tau))\right]\leq 0,
\]
thanks to $f_i\le h_i$, we conclude $J_i(x,\tau,\rho_T)-J_i(x,\tau,\rho_R)=\E_x\left[(1_A+1_B)(F_i(\tau,\rho_T)-F_i(\tau,\rho_R))\right]\le 0$.

Next, fix $S, S'\in\B$ with $S\supseteq S'$. For any $x\in \Phi_i^R(S')$, if $x\in S$, then $x\in \Phi_i^T(S)$ by definition. Hence, we assume $x\notin S$ in the following. With $x\in \Phi^R(S')\setminus S'$, \eqref{Phi_i^T} gives
\begin{equation}\label{taka}
J_i(x,0,\rho_R)>V_i^R(x,S')\geq V_i^R(x,S)\geq V_i^T(x,S),
\end{equation}
where the last inequality follows from \eqref{J<J} and \eqref{V_i^T}. Note that we must have $x\notin R$. Indeed, if $x\in R$, then $\rho_R=0$ $\P_x$-a.s. and thus 
\[
J_i(x,0,\rho_R)=h_i(x)\le g_i(x) =\sup_{1\le \tau\le \rho^+_{S'}} \E_x[F_i(\tau,\rho_R)] =V_i^R(x,S'),
\]
which contradicts the first inequality in \eqref{taka}. With $x\notin R$ and thus $\rho_T\ge \rho_R>0$ $\P_x$-a.s., 
$$J_i(x,0,\rho_T) = f_i(x)= J_i(x,0,\rho_R)>V_i^T(x,S),$$
where the last inequality follows from \eqref{taka}. This, together with $x\notin S$, yields $x\in \Phi_i^T(S)$. We therefore conclude $\Phi_i^R(S')\subseteq\Phi_i^T(S)$.
\end{proof}

%\begin{lemma}\label{l7}
%Assume $f\leq h\leq g$ and that $(\delta(t)g(X_t^x))_{t\geq 0}$ is a supermartingale for all $x\in\X$. Then, for any $T, R\in\B$ with $T\subseteq R$, we have 
%\[
%\Phi^T(S)\supseteq \Phi^R(S')\quad \forall S, S'\in\B\ \hbox{with}\ S\supseteq S'. 
%\]
%\end{lemma}

\begin{corollary}\label{l8}
Fix $i\in\{1,2\}$. Assume $f_i\leq h_i\leq g_i$ and \eqref{supermartingale}. %that $(\delta_i(t)g_i(X_t^x))_{t\geq 0}$ is a supermartingale for all $x\in\X$. 
Then, for any $T, R\in\B$ with $T\subseteq R$, we have $\Gamma_i(T)\supseteq \Gamma_i(R)$, with $\Gamma_i(\cdot)$ defined as in \eqref{S_infty}.
\end{corollary}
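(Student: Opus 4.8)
The statement is a clean induction: I would show that $S_i^n(T)\supseteq S_i^n(R)$ for every $n\in\N$, and then pass to the union $\Gamma_i(\cdot)=\bigcup_n S_i^n(\cdot)$ from \eqref{S_infty}. All the real work has already been done in \lemref{l6}; the only tool needed here is the joint monotonicity relation \eqref{Phi inclusion}, which under the standing hypotheses $f_i\le h_i\le g_i$ and \eqref{supermartingale} says $\Phi_i^T(S)\supseteq\Phi_i^R(S')$ whenever $T\subseteq R$ and $S\supseteq S'$. So there is essentially no obstacle beyond invoking that lemma correctly — the supermartingale comparison of $J_i(\cdot,\tau,\rho_T)$ versus $J_i(\cdot,\tau,\rho_R)$, which is the substantive part, is packaged inside \lemref{l6}.

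\textbf{Base case.} Since $\emptyset\supseteq\emptyset$ and $T\subseteq R$, applying \eqref{Phi inclusion} with $S=S'=\emptyset$ gives $S_i^1(T)=\Phi_i^T(\emptyset)\supseteq\Phi_i^R(\emptyset)=S_i^1(R)$, recalling the definition \eqref{S_n}.

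\textbf{Inductive step.} Suppose $S_i^n(T)\supseteq S_i^n(R)$ for some $n\in\N$. Applying \eqref{Phi inclusion} with $S=S_i^n(T)$ and $S'=S_i^n(R)$ (and again $T\subseteq R$), we get
\[
S_i^{n+1}(T)=\Phi_i^T\big(S_i^n(T)\big)\supseteq\Phi_i^R\big(S_i^n(R)\big)=S_i^{n+1}(R).
\]
Hence $S_i^n(T)\supseteq S_i^n(R)$ for all $n\in\N$, and therefore $\Gamma_i(T)=\bigcup_{n\in\N}S_i^n(T)\supseteq\bigcup_{n\in\N}S_i^n(R)=\Gamma_i(R)$, which is the claim. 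The main (and only) point to be careful about is that \eqref{Phi inclusion} must be applied with the larger policy paired with the smaller superscript, i.e. with the iterates for $T$ (the smaller superscript) as the ``$S$'' argument and those for $R$ as the ``$S'$'' argument; the induction is arranged precisely so that this pairing is available at each step.
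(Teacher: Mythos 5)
Your proof is correct and follows essentially the same route as the paper: the paper likewise proves $S_i^n(T)\supseteq S_i^n(R)$ for all $n$ by applying \eqref{Phi inclusion} recursively (starting from $\Phi_i^T(\emptyset)\supseteq\Phi_i^R(\emptyset)$) and then takes unions to conclude $\Gamma_i(T)\supseteq\Gamma_i(R)$. No gaps.
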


\begin{proof}
In view of \eqref{S_infty}, $\Gamma_i(T)=\bigcup_{n\in\N} S_i^n(T)$ and $\Gamma_i(R)=\bigcup_{n\in\N} S_i^n(R)$, with $(S_i^n(T))_{n\in\N}$ and $(S_i^n(R))_{n\in\N}$ defined as in \eqref{S_n}. Hence, it suffices to show $S_i^n(T)\supseteq S_i^n(R)$ for all $n\in\N$. By \eqref{Phi inclusion}, $S^1_i(T) = \Phi_i^T(\emptyset)\supseteq\Phi_i^R(\emptyset) = S^1_i(R)$. Using this and \eqref{Phi inclusion} again, we get $S^2_i(T) = \Phi_i^T(S_i^1(T))\supseteq\Phi_i^R(S^1_i(R)) = S_i^2(R)$. Applying \eqref{Phi inclusion}  recursively in the same way then yields $S^n_i(T)\supseteq S^n_i(R)$ for all $n\in\N$.
\end{proof}

The monotonicity of $T\mapsto \Gamma_i(T)$, $i\in\{1,2\}$, will play a crucial role in Theorem~\ref{t2} below, contributing to the convergence of an alternating iterative procedure performed jointly by Players 1 and 2. 

%%%%%%%%%%%%%%%%%%%%%%%%%%%%%%%%%%%
%%%%%%%%%%%%%%%%%%%%%%%%%%%%%%%%%%%

\section{The Existence of Inter-Personal Equilibria}\label{sec:existence}
In this section, we will design an alternating iterative procedure, to be performed jointly by the two players. As shown in Theorem~\ref{t2} below, this procedure converges to a soft inter-personal equilibrium that is almost sharp. By a probabilistic modification of this iterative procedure and an appropriate use of Zorn's lemma, we establish the existence of sharp inter-personal equilibria in Theorem~\ref{t3} below. Explicit examples will be presented to illustrate this iterative procedure and the necessity of a supermartingale condition. 

First, we observe that Theorem~\ref{t1} already provides a sufficient condition for the existence of a sharp inter-personal equilibrium. 

\begin{lemma}\label{lem:sufficient}
For each $i\in\{1,2\}$, assume $h_i\le g_i$. If $(S,T)\in \B\times\B$ satisfies 
\begin{equation}\label{suffi. cond.}
\Gamma_1(T)=S\quad \hbox{and}\quad \Gamma_2(S)=T, 
\end{equation}
then $(S,T)\in \widehat\cE$. 
%$$\Gamma^1(T)=S\qq \Gamma^2(S)=T.$$
\end{lemma}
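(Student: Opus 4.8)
The statement is essentially immediate from Theorem~\ref{t1}, and the plan is simply to unpack the definitions. Suppose $(S,T)\in\B\times\B$ satisfies \eqref{suffi. cond.}, i.e.\ $\Gamma_1(T)=S$ and $\Gamma_2(S)=T$. By Theorem~\ref{t1} applied with $i=1$ (using the standing assumption $h_1\le g_1$), we have $\Gamma_1(T)\in\widehat\cE_1^T$; since $\Gamma_1(T)=S$, this reads $S\in\widehat\cE_1^T$. Symmetrically, Theorem~\ref{t1} with $i=2$ (using $h_2\le g_2$) gives $\Gamma_2(S)\in\widehat\cE_2^S$, i.e.\ $T\in\widehat\cE_2^S$. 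Having both $S\in\widehat\cE_1^T$ and $T\in\widehat\cE_2^S$ is exactly the defining condition of a sharp inter-personal equilibrium in Definition~\ref{def:sharp}, so $(S,T)\in\widehat\cE$, as claimed.

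There is really no obstacle here: the entire content is bundled into Theorem~\ref{t1} (which in turn rests on Proposition~\ref{l3}, Lemma~\ref{l4}, and Lemma~\ref{l5}), and this lemma merely records the observation that a common fixed point of the two maps $\Gamma_1(\cdot)$ and $\Gamma_2(\cdot)$ — each of which is known to land in the respective set of \emph{optimal} intra-personal equilibria — automatically yields a sharp inter-personal equilibrium. The only thing to be mildly careful about is that Theorem~\ref{t1} requires $h_i\le g_i$ for the relevant $i$; both hypotheses are in force by assumption, so applying the theorem for $i=1$ and $i=2$ separately is legitimate. The real work lies downstream: exhibiting a pair $(S,T)$ satisfying \eqref{suffi. cond.}, which is what the alternating iterative procedure of Section~\ref{sec:existence} is designed to produce.
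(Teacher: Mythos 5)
Your proposal is correct and follows exactly the same route as the paper: apply Theorem~\ref{t1} for $i=1$ and $i=2$ (using $h_i\le g_i$) to get $S=\Gamma_1(T)\in\widehat\cE_1^T$ and $T=\Gamma_2(S)\in\widehat\cE_2^S$, then invoke Definition~\ref{def:sharp}. No gaps.
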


\begin{proof}
By Theorem~\ref{t1}, $S=\Gamma_1(T)\in\widehat\cE_1^T$ and $T=\Gamma_2(S)\in\widehat\cE_2^S$, so that $(S,T)\in\widehat\cE$. 
\end{proof}

%%%%%%%%%%%%%%%%%%%%%

\subsection{Construction of Soft Inter-Personal Equilibira}

In order to achieve \eqref{suffi. cond.}, we let the two players take turns to perform the {\it individual} iterative procedure \eqref{S_n}. As the next result shows, such alternating iterations do converge, and the limit is guaranteed a soft inter-personal equilibrium.% that is {\it nearly} sharp. 

\begin{theorem}\label{t2}
For each $i\in\{1,2\}$, assume $f_i\le h_i\le g_i$ and \eqref{supermartingale}. 
%that $(\delta_i(t)g_i(X_t^x))_{t\geq 0}$ is a supermartingale for all $x\in\X$. 
Let $(S_n,T_n)$ be a sequence in $\B\times\B$ defined by $S_0:=\emptyset$ and %for $n\in\N$, define
\begin{equation}\label{alternating}
T_n:=\Gamma_2(S_n)\qq S_{n+1}:=\Gamma_1(T_n),\quad \forall n\in\N\cup\{0\}. 
\end{equation}
Then, $(S_n)$ is nondecreasing and $(T_n)$ is nonincreasing. By taking $S_\infty:=\bigcup_n S_n$ and $T_\infty:=\bigcap_n T_n$, we have $(S_\infty,T_\infty)\in \cE$ with 
\begin{equation}\label{almost sharp}
\Gamma_1(T_\infty)=S_\infty\quad \hbox{and}\quad \Gamma_2(S_\infty)\subseteq T_\infty.
\end{equation}
%\begin{itemize}
%\item [(i)] $(S_n)$ is nondecreasing and $(T_n)$ is nonincreasing;
%\item [(ii)] $(S_\infty,T_\infty)\in \cEintra$, with $S_\infty:=\bigcup_n S_n$ and $T_\infty:=\bigcap_n T_n$;
%\item [(iii)] $\Gamma_1(T_\infty)=S_\infty$ and $\Gamma_2(S_\infty)\subset T_\infty.$
%\end{itemize}
\end{theorem}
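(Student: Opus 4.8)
The plan is to exploit the monotonicity established in Corollary~\ref{l8} to show the alternating sequences are monotone, pass to the limit using Lemma~\ref{l1}, and then verify the equilibrium conditions via the continuity-type statement of Lemma~\ref{l2} together with Theorem~\ref{t1}. First I would prove by induction that $(S_n)$ is nondecreasing and $(T_n)$ is nonincreasing. The base case is $S_0=\emptyset\subseteq S_1$, which is immediate. Since $S_0\subseteq S_1$, applying Corollary~\ref{l8} (with the roles of the players' data satisfied by assumption $f_i\le h_i\le g_i$ and \eqref{supermartingale}) gives $T_0=\Gamma_2(S_0)\supseteq\Gamma_2(S_1)=T_1$; then $T_1\subseteq T_0$ gives $S_2=\Gamma_1(T_1)\supseteq\Gamma_1(T_0)=S_1$ by Corollary~\ref{l8} again (applied to Player 1, but note the inclusion reverses since $\Gamma_i$ is decreasing in its argument, so $T_1\subseteq T_0$ yields $\Gamma_1(T_1)\supseteq\Gamma_1(T_0)$); continuing the induction alternately yields both monotonicity claims. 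Setting $S_\infty:=\bigcup_n S_n$ and $T_\infty:=\bigcap_n T_n$ is then well-defined, and $S_n\subseteq S_\infty$, $T_\infty\subseteq T_n$ for all $n$.

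Next I would establish the two relations in \eqref{almost sharp}. For the inclusion $\Gamma_2(S_\infty)\subseteq T_\infty$: since $S_n\subseteq S_\infty$ for every $n$, Corollary~\ref{l8} gives $\Gamma_2(S_\infty)\subseteq\Gamma_2(S_n)=T_n$ for every $n$, hence $\Gamma_2(S_\infty)\subseteq\bigcap_n T_n=T_\infty$. For the equality $\Gamma_1(T_\infty)=S_\infty$: one inclusion is analogous — since $T_\infty\subseteq T_n$, Corollary~\ref{l8} gives $\Gamma_1(T_\infty)\supseteq\Gamma_1(T_n)=S_{n+1}$ for all $n$, so $\Gamma_1(T_\infty)\supseteq\bigcup_n S_{n+1}=S_\infty$. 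The reverse inclusion $\Gamma_1(T_\infty)\subseteq S_\infty$ is the crux and requires the convergence lemmas. Here I would argue: recall $\Gamma_1(T_\infty)=\bigcup_m S_1^m(T_\infty)$ where $S_1^m(T_\infty)=\Phi_1^{T_\infty}(S_1^{m-1}(T_\infty))$; I want to show each $S_1^m(T_\infty)\subseteq S_\infty$ by induction on $m$. The step uses that if $S\subseteq S_\infty$ then $\Phi_1^{T_\infty}(S)\subseteq S_\infty$ — for $x\in\Phi_1^{T_\infty}(S)\setminus S$ we have $J_1(x,0,\rho_{T_\infty})>V_1^{T_\infty}(x,S)\ge V_1^{T_\infty}(x,S_\infty)$, and one must deduce $x\in S_\infty$. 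Since $S_\infty\in\cE_1^{T_\infty}$ would immediately close this (by Lemma~\ref{l4}), the real task is to first show $S_\infty\in\cE_1^{T_\infty}$, i.e. $\Theta_1^{T_\infty}(S_\infty)=S_\infty$.

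To show $S_\infty\in\cE_1^{T_\infty}$ directly: by Lemma~\ref{l1}, for each $\omega$ eventually $\rho_{T_n}(\omega)=\rho_{T_\infty}(\omega)$ and $\rho^+_{S_n}(\omega)=\rho^+_{S_\infty}(\omega)$, so by \eqref{dominated} and dominated convergence $J_1(x,\rho^+_{S_\infty},\rho_{T_\infty})=\lim_n J_1(x,\rho^+_{S_{n+1}},\rho_{T_n})$ and similarly $J_1(x,0,\rho_{T_\infty})=\lim_n J_1(x,0,\rho_{T_n})$. Since $S_{n+1}=\Gamma_1(T_n)\in\cE_1^{T_n}$ by Theorem~\ref{t1}, the defining inequalities of $\Theta_1^{T_n}(S_{n+1})=S_{n+1}$ hold at level $n$, and passing to the limit — being careful that membership $x\in S_{n+1}$ stabilizes as $x\in S_\infty$ (since $(S_n)$ is nondecreasing) — yields $\Theta_1^{T_\infty}(S_\infty)=S_\infty$. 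A symmetric limiting argument shows $T_\infty\in\cE_2^{S_\infty}$, giving $(S_\infty,T_\infty)\in\cE$. Once $S_\infty\in\cE_1^{T_\infty}$ is in hand, Lemma~\ref{l4} applied inductively gives $S_1^m(T_\infty)\subseteq S_\infty$ for all $m$, hence $\Gamma_1(T_\infty)\subseteq S_\infty$, completing $\Gamma_1(T_\infty)=S_\infty$. The main obstacle I anticipate is the passage to the limit in the equilibrium conditions: one must track carefully that the set-membership indicators $1_{\{x\in S_{n+1}\}}$ and $1_{\{x\in T_n\}}$ stabilize along the monotone sequences (the easy direction being monotonicity, the subtle point being that the boundary behavior in the strict-versus-weak inequalities of \eqref{Theta} is handled correctly when $x$ enters $S_\infty$ only in the limit), and that the dominated-convergence interchange is licensed uniformly over $x$ via \eqref{dominated} — a slightly delicate but ultimately routine application of Lemma~\ref{l1}.
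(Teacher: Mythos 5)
Your proposal is correct and follows essentially the same route as the paper: monotonicity of $(S_n)$ and $(T_n)$ via alternating applications of Corollary~\ref{l8}, the equilibrium property of $(S_\infty,T_\infty)$ by passing the inequalities from $\Gamma_1(T_n)\in\cE_1^{T_n}$ (resp.\ $\Gamma_2(S_n)\in\cE_2^{S_n}$) to the limit using Lemma~\ref{l1}, \eqref{dominated}, and dominated convergence, and then \eqref{almost sharp} from Corollary~\ref{l8} together with Lemma~\ref{l4} applied with $R=S_\infty\in\cE_1^{T_\infty}$. The limit-passage subtlety you flag is handled exactly as you suggest (weak inequalities in the appropriate direction on $S_\infty$ and its complement), so no gap remains.
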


\begin{proof}
As $S_0=\emptyset\subseteq S_1$, applying Corollary~\ref{l8} for Player 2 implies $T_0=\Gamma_2(S_0)\supseteq \Gamma_2(S_1)= T_1$. With $T_0\supseteq T_1$, applying Corollary~\ref{l8} for Player 1 implies $S_1=\Gamma_1(T_0)\subseteq \Gamma_1(T_1)= S_2$. Again, by $S_1\subseteq S_2$, applying Corollary~\ref{l8} for Player 2 gives $T_1=\Gamma_2(S_1)\supseteq \Gamma_2(S_2)= T_2$. Repeating this procedure for Players 1 and 2 recursively, we have $(S_n)$ nondecreasing and $(T_n)$ is nonincreasing.

Next, let us show that $\Theta_1^{T_\infty}(S_\infty)=S_\infty$. 
Fix $x\in S_\infty= \bigcup_n S_n$. There exists $N\in\N$ such that $x\in S_{n+1}=\Gamma_1(T_n)$ for all $n>N$. By the fact that $\Gamma_1(T_n) \in \cE_1^{T_n}$ (thanks to Proposition~\ref{l3}), $J_1(x,0,\rho_{T_n})\geq J_1(x,\rho^+_{S_{n+1}},\rho_{T_n})$ for all $n\ge N$. As $n\to\infty$, \eqref{dominated} allows us to use the dominated convergence theorem, so that we may conclude from \lemref{l1} that
\begin{equation}\label{use dominated'}
J_1(x,0,\rho_{T_\infty})\geq J_1(x,\rho^+_{S_\infty},\rho_{T_\infty}),
\end{equation}
which implies $x\in \Theta_1^{T_\infty}(S_\infty)$. Hence, $S_\infty\subseteq \Theta_1^{T_\infty}(S_\infty)$. On the other hand, for any $x\notin S_\infty = \bigcup_n S_n$, $x\notin S_{n+1}=\Gamma_1(T_n)$ for all $n\in\N$. Thanks again to the fact that $\Gamma_1(T_n) \in \cE_1^{T_n}$, $x\notin \Gamma_1(T_n)$ indicates
$J_1(x,0,\rho_{T_n})\leq J_1(x,\rho^+_{S_{n+1}},\rho_{T_n})$ for all $n\in\N$. As $n\to\infty$, we can argue as in \eqref{use dominated'} to get $J_1(x,0,\rho_{T_\infty})\leq J_1(x,\rho^+_{S_\infty},T_\infty)$, which implies $x\notin \Theta_1^{T_\infty}(S_\infty)$. Hence, $(S_\infty)^c\subseteq \big(\Theta_1^{T_\infty}(S_\infty)\big)^c$, or $\Theta_1^{T_\infty}(S_\infty)\subseteq S_\infty$. We thus conclude $\Theta_1^{T_\infty}(S_\infty)=S_\infty$, i.e. $S_\infty\in \cE_1^{T_\infty}$. Similar arguments as above yield $\Theta_2^{S_\infty}(T_\infty)=T_\infty$, i.e. $T_\infty\in \cE_2^{S_\infty}$. This readily shows that $(S_\infty,T_\infty)\in\cE$. 

As $S_n\subseteq S_\infty$ by construction for all $n\in\N$, Corollary~\ref{l8} implies $\Gamma_2(S_\infty)\subseteq \Gamma_2(S_n)=T_n$ for all $n\in\N$, which in turn gives $\Gamma_2(S_\infty)\subseteq \bigcap_n T_n=T_\infty$. Similarly, as $T_n \supseteq T_\infty$ by construction for all $n\in\N$, Corollary~\ref{l8} implies $\Gamma_1(T_\infty)\supseteq \Gamma_1(T_n)=S_{n+1}$ for all $n\in\N$, which in turn gives $\Gamma_1(T_\infty)\supseteq \bigcup_{n\in\N} S_{n+1}=S_\infty$. Finally, recall $S_\infty\in \cE_1^{T_\infty}$. This, together with Lemma~\ref{l4}, implies $\Gamma_1(T_\infty)\subseteq S_\infty$. We then conclude $\Gamma_1(T_\infty)= S_\infty$. 
\end{proof} 

The next example illustrates the alternating iterative procedure \eqref{alternating} explicitly.

\begin{example}\label{eg1}
Let $\X$ contain countably many states, i.e. $\mathbb{X}=\{x_0,x_1, x_2, \dotso\}$, and assume
\begin{equation}\label{transition}
\begin{split}
&\P_{x_{n+1}}(X_1=x_n)=1,\quad \hbox{for}\ n=0,1,2\dotso,\\
\P_{x_0}(X_1=x_0)&=1-\eps\quad\text{and}\quad\P_{x_0}(X_1=x_1)=\eps,\quad \hbox{for some}\ \eps\in[0,1).
\end{split}
\end{equation}
Take $M>1$ such that 
\begin{equation}\label{delta_2}
\delta_2(2)<1/M< \delta_2(1).
\end{equation}
Additionally, take $L>1$ and consider the following payoff functions for the two players
\begin{align}
f_1(x_n)= 1\quad\text{and}\quad g_1(x_n)= L\quad \hbox{for}\ n=0,1,2,\dotso,\label{p1} \\
f_2(x_0)=0,\quad f_2(x_n)=1\ \hbox{for}\ n=1,2,\dotso,\quad\text{and}\quad g_2(x_n)=M\ \hbox{for}\ n=0,1,2\dotso,\label{p2}
\end{align}
while $h_1$ (resp. $h_2$) is allowed to be any function such that $f_1\le h_1\le g_1$ (resp. $f_2\le h_2\le g_2$) on $\X$. 

For $\eps\in[0,1)$ small enough, we claim that the alternating iterative procedure \eqref{alternating} gives rise to 
\begin{align}
S_0&=\emptyset, &T_0&=\{x_1,x_2,\dotso\},\notag\\
S_1&=\{x_0\}, &T_1&=\{x_2,x_3,\dotso\},\notag\\
S_2&=\{x_0,x_1\}, &T_2&=\{x_3,x_4,\dotso\},\label{eg1 S_n T_n}\\
&\vdots& &\vdots\notag\\
S_n&=\{x_0,x_1,\dotso,x_{n-1}\},&T_n&=\{x_{n+1},x_{n+2},\dotso\}.\notag
\end{align}
First, starting with $S_0=\emptyset$, we deduce from \eqref{p2} that for any $S\in \B=2^\X$, 
\begin{equation}\label{1in}
V^{S_0}_2(x_n,S) = \sup_{1\le \tau\le \rho^+_S} \E_{x_n}[F_2(\tau,\rho_{S_0})]
\begin{cases}
\ge 0 =f_2(x_n)=J_2(x_n,0,\rho_{S_0}),\  &\hbox{for}\ n=0,\\
<1 =f_2(x_n)=J_2(x_n,0,\rho_{S_0}),\  &\hbox{for}\ n=1,2,\dotso
\end{cases}
\end{equation}
This implies $\Phi^{S_0}_2(\emptyset) = \{x_1,x_2,...\}$ and $\Phi^{S_0}_2(\{x_1,x_2,...\}) = \{x_1,x_2,...\}$, so that $T_0 := \Gamma_2(S_0)=\{x_1,x_2,...\}$. Next, thanks to \eqref{transition} and \eqref{p1}, for any $S\in 2^\X$, 
\begin{align}
V^{T_0}_1(x_n,S)& = \sup_{1\le \tau\le \rho^+_S} \E_{x_n}[F_1(\tau,\rho_{T_0})]\notag\\
&\hspace{-0.5in}\begin{cases}
\le (1-\eps)\delta_1(1)+\eps L \left(1+(1-\eps)+(1-\eps)^2+\dotso\right)\\
\hspace{1in}= (1-\eps)\delta_1(1)+\frac{\eps L}{1+\eps} <1 =f_1(x_n)= J_1(x_n,0,\rho_{T_0}),\ &\hbox{for}\ n=0,\label{2in}\\
=g_1(x_n)\ge h_1(x_n) = J_1(x_n,0,\rho_{T_0}),\ &\hbox{for}\ n=1,2,\dotso
\end{cases}
\end{align}
where the inequality ``$(1-\eps)\delta_1(1)+\frac{\eps L}{1+\eps} <1$"  holds as $\eps\in [0,1)$ is small enough. This implies $\Phi^{T_0}_1(\emptyset) = \{x_0\}$ and $\Phi^{T_0}_1(\{x_0\}) = \{x_0\}$, so that $S_1 := \Gamma_1(T_0)=\{x_0\}$. Now, thanks to \eqref{transition} and \eqref{p2}, for any $S\in 2^\X$ such that $x_0\notin S$, 
\begin{align}
V^{S_1}_2(x_n,S)& = \sup_{1\le \tau\le \rho^+_S} \E_{x_n}[F_2(\tau,\rho_{S_1})]\notag\\
&\hspace{-0.5in}\begin{cases}
=g_2(x_n)\ge h_2(x_n) = J_2(x_n,0,\rho_{S_1}),\ &\hbox{for}\ n=0,\\
= \delta_2(1) g_2(x_0) = \delta_2(1) M \ge1=f_2(x_n) = J_2(x_n,0,\rho_{S_1}),\ &\hbox{for}\ n=1,\\
\le \max\{\delta_2(1), \delta_2(2) M\}<1 = f_2(x_n) = J_2(x_n,0,\rho_{S_1}),\ &\hbox{for}\ n=2,3, \dotso\label{3in} 
\end{cases}
\end{align}
where we use \eqref{delta_2} in the last two lines.
This implies $\Phi^{S_1}_2(\emptyset) = \{x_2, x_3, ...\}$ and $\Phi^{S_1}_2(\{x_2, x_3,...\}) = \{x_2,x_3,...\}$, so that $T_1 := \Gamma_2(S_1)=\{x_2,x_3,...\}$. By similar arguments as above, we can derive $S_n$ and $T_n$ in \eqref{eg1 S_n T_n} for all $n\ge 2$. %It follows that $S_\infty:=\bigcup_{n\in\N} S_n =\{x_0,x_1,x_2\dotso\}=\X$ and $T_\infty:= \bigcap_{n\in\N} T_n=\emptyset$. 
By Theorem~\ref{t2}, $(S_\infty,T_\infty):= (\bigcup_{n\in\N} S_n, \bigcap_{n\in\N} T_n) = (\X,\emptyset)$ is a soft inter-personal equilibrium and satisfies \eqref{almost sharp}.
Observe that for any $n=0,1,2,\dotso$, % and $S\in 2^\X$ that
\[
V^{\X}_2(x_n,\emptyset) = \sup_{1\le \tau\le \rho^+_\emptyset} \E_{x_n}[F_2(\tau,\rho_{\X})]=g_2(x_n)\ge h_2(x_n) = J_2(x_n,0,\rho_\X).
\]
It follows that $\Phi^\X_2(\emptyset)=\emptyset$, so that $\Gamma_2(S_\infty) =\Gamma_2(\X)=\emptyset=T_\infty$. That is, we have a stronger version of \eqref{almost sharp} where the inclusion therein is equality. Hence, by Lemma~\ref{lem:sufficient}, $(S_\infty,T_\infty)=(\X,\emptyset)$ is in fact a sharp inter-personal equilibrium.  
\end{example}

In view of \eqref{almost sharp} and Lemma~\ref{lem:sufficient}, the soft inter-personal equilibrium $(S_\infty,T_\infty)$ constructed in Theorem~\ref{t2} is {\it nearly} a sharp one. It is natural to ask whether the inclusion in \eqref{almost sharp} is actually equality (as in Example~\ref{eg1}), so that $(S_\infty,T_\infty)$ is sharp in general. The next example shows that this is generally {\it not} the case: the inclusion in \eqref{almost sharp} can be strict and $(S_\infty,T_\infty)$ may fail to be sharp.

\begin{example}\label{eg2}
Let us extend the state space in Example~\ref{eg1} by including two additional states, i.e. $\mathbb{X}=\{x_0,x_1,x_2,\dotso\}\cup\{y,z\}$. The transition probabilities are specified as in \eqref{transition}, as well as 
\begin{equation}\label{transition'}
\P_y(X_1=x_n)=p_n>0\ \ \text{with}\ \ \sum_{n=0}^\infty p_n=1\quad\text{and}\quad\P_z(X_1=y)=1.
\end{equation}
Take $M>1$ such that \eqref{delta_2} holds. Assume additionally that $\delta_2:[0,\infty)\to[0,1]$ satisfies 
%\begin{equation}\label{delta_2'}
$\delta_2(1)^2<\delta_2(2)$. 
%\end{equation}
Take $L>1$ and define $f_i$ and $g_i$, $i\in\{1,2\}$, as in \eqref{p1}-\eqref{p2} on $\{x_0,x_1,x_2,\dotso\}$, along with
\begin{equation}\label{p2'}
f_2(y)=M\delta_2(1),\quad f_2(z) \in \left(M\delta_2(1)^2\vee \delta_2(2), M\delta_2(2)\right),
\quad g_2(y)=g_2(z)=M,
\end{equation}
while $f_1$ and $g_1$ are allowed to take arbitrary nonnegative values on $\{y,z\}$ as long as $f_1\le g_1$. Also, $h_1$ (resp. $h_2$) is allowed to be any function such that $f_1\le h_1\le g_1$ (resp. $f_2\le h_2\le g_2$) on $\X$.

For $\eps\in [0,1)$ small enough, we claim that the alternating iterative procedure \eqref{alternating} gives rise to 
\begin{align}
S_0&=\emptyset,& T_0&=\{x_1,x_2,\dotso\}\cup\{y,z\},\notag\\
S_1&=\{x_0\},& T_1&=\{x_2,x_3,\dotso\}\cup\{y,z\},\notag\\
S_2&=\{x_0,x_1\},& T_2&=\{x_3,x_4,\dotso\}\cup\{y,z\},\label{eg2 S_n T_n}\\
&\vdots& &\vdots\notag\\
S_n&=\{x_0,x_1,\dotso,x_{n-1}\},&T_n&=\{x_{n+1},x_{n+2},\dotso\}\cup\{y,z\}.\notag
\end{align}
First, note that the relations \eqref{1in}, \eqref{2in}, and \eqref{3in} remain true in our current setting. Now, starting with $S_0=\emptyset$, we deduce from \eqref{transition'}, \eqref{p2}, and \eqref{p2'} that for any $S\in \B=2^\X$, 
\begin{align*}
V^{S_0}_2(y,S) &= \sup_{1\le \tau\le \rho^+_S} \E_{y}[F_2(\tau,\rho_{S_0})] <\delta_2(1)<f_2(y)=J_2(y,0,\rho_{S_0}),\\
V^{S_0}_2(z,S) &= \sup_{1\le \tau\le \rho^+_S} \E_{z}[F_2(\tau,\rho_{S_0})] \le \max\{\delta_2(1) f_2(y), \delta_2(2)\}\\
&\hspace{1.55in} =\max\{M \delta_2(1)^2, \delta_2(2)\}< f_2(z)=J_2(z,0,\rho_{S_0}).
\end{align*}
These two inequalities, along with \eqref{1in}, imply $\Phi^{S_0}_2(\emptyset) = \{x_1,x_2,...\}\cup\{y,z\}$ and $\Phi^{S_0}_2(\{x_1,x_2,...\}\cup\{y,z\}) = \{x_1,x_2,...\}\cup\{y,z\}$, so that $T_0 := \Gamma_2(S_0)=\{x_1,x_2,...\}\cup\{y,z\}$. Next, since $\{y,z\}\subset T_0$, for any $S\in 2^\X$, $V^{T_0}_1(x,S) = \sup_{1\le \tau\le \rho^+_S} \E_{x}[F_1(\tau,\rho_{T_0})] = g_1(x)\ge h_1(x) = J_1(x,0,\rho_{T_0})$ for $x\in\{y,z\}$. 
%\begin{align*}
%V^{T_0}_1(x,S) = \sup_{1\le \tau\le \rho^+_S} \E_{x}[F_1(\tau,\rho_{T_0})] = g_1(x)\ge h_1(x) = J_1(x,0,\rho_{T_0}),\ \ \hbox{for}\ x\in\{y,z\}.
%\end{align*}
This, together with \eqref{2in}, implies that as $\eps\in[0,1)$ is small enough, $\Phi^{T_0}_1(\emptyset) = \{x_0\}$ and $\Phi^{T_0}_1(\{x_0\}) = \{x_0\}$, so that $S_1 := \Gamma_1(T_0)=\{x_0\}$.
Thanks to \eqref{transition'}, \eqref{transition}, \eqref{p2}, and \eqref{p2'}, for any $S\in 2^\X$ such that $x_0\notin S$, 
\begin{align*}
V^{S_1}_2(y,S)& = \sup_{1\le \tau\le \rho^+_S} \E_{y}[F_2(\tau,\rho_{S_1})]<\delta_2(1) M=f_2(y) = J_2(y,0,\rho_{S_1}),\\
V^{S_1}_2(z,S)& = \sup_{1\le \tau\le \rho^+_S} \E_{z}[F_2(\tau,\rho_{S_1})]\le \max\left\{\delta_2(1) f_2(y), \delta_2(2) \big((1-O(\eps))+O(\eps)M\big) \right\}\\
&\hspace{1.55in} <f_2(z) = J_2(z,0,\rho_{S_1}), 
\end{align*}
where the last inequality holds as $\eps\in[0,1)$ is small enough, thanks to \eqref{p2'}. The above two inequalities, along with \eqref{3in}, imply $\Phi^{S_1}_2(\emptyset) = \{x_2, x_3, ...\}\cup\{y,z\}$ and $\Phi^{S_1}_2(\{x_2, x_3,...\}) = \{x_2,x_3,...\}\cup\{y,z\}$, so that $T_1 := \Gamma_2(S_1)=\{x_2,x_3,...\}\cup\{y,z\}$. By similar arguments as above, we can derive $S_n$ and $T_n$ in \eqref{eg2 S_n T_n} for all $n\ge 2$. 
Hence, $(S_\infty,T_\infty):= (\bigcup_{n\in\N} S_n, \bigcap_{n\in\N} T_n) = (\{x_0,x_1,x_2,\dotso\},\{y,z\})$. 

Now, it can be easily checked that $V^{S_\infty}_2(x_n,\emptyset) = g_2(x_n)\ge h_2(x_n) = J_2(x_n,0,\rho_{S_\infty})$ for all $n=0,1,2,\dotso$. Moreover, due to \eqref{transition'},
\begin{align*}
V^{S_\infty}_2(y,\emptyset)& = \sup_{1\le \tau\le \rho^+_\emptyset} \E_{y}[F_2(\tau,\rho_{S_\infty})]=\delta_2(1) M=f_2(y) = J_2(y,0,\rho_{S_\infty}),\\
V^{S_\infty}_2(z,\emptyset)& = \sup_{1\le \tau\le \rho^+_\emptyset} \E_{z}[F_2(\tau,\rho_{S_\infty})]=\delta_2(2)M>f_2(z) = J_2(z,0,\rho_{S_\infty}).
\end{align*}
In view of \eqref{Phi_i^T}, we conclude $\Phi_2^{S_\infty}(\emptyset) = \emptyset$. By \eqref{S_infty}, this in turn implies  
%\[
$\Gamma_2(S_\infty)=\emptyset\subsetneq T_\infty.$
%\]
That is, the inclusion in \eqref{almost sharp} is strict, so that we can no longer conclude from Lemma~\ref{lem:sufficient} that $(S_\infty, T_\infty)$ is a sharp inter-personal equilibrium.  
In fact, $(S_\infty, T_\infty)$ is not sharp. Recall from Theorem~\ref{t1} that $\emptyset=\Gamma_2(S_\infty)\in \widehat \cE^{S_\infty}_2$. Then, it can be checked directly that $T_\infty=\{y,z\}\in \cE^{S_\infty}_2$ but $T_\infty\notin \widehat \cE^{S_\infty}_2$. Specifically, $T_\infty$ is strictly dominated by $\emptyset$ at the state $z$, as \eqref{p2'} indicates  
\begin{align*}
U^{S_\infty}_2(z,T_\infty) &= J_2(z,0,\rho_{S_\infty})\vee J_2(z,\rho^+_{T_\infty},\rho_{S_\infty})=f_2(z)\vee M\delta_2(1)^2\\
&=f_2(z)< M\delta_2(2) = J_2(z,\rho^+_\emptyset,\rho_{S_\infty}) \le U^{S_\infty}_2(z,\emptyset). 
\end{align*}
As $T_\infty\notin \widehat \cE^{S_\infty}_2$, $(S_\infty, T_\infty)= (\{x_0,x_1,x_2,\dotso\},\{y,z\})$ is not a sharp inter-personal equilibrium. 
%Now assume $g_1(y)=g_1(z)=L$. If e.g., $f_1(y)=f_1(z)=0$, then $\Gamma_1(\emptyset)=\{x_0,x_1,\dotso\}$. If e.g., $f_1(y)=f_1(z)=1$, then $\Gamma_1(\emptyset)=\{x_0,x_1,\dotso\}\cup\{y,z\}$.
\end{example}

%%%%%%%%%%%%%%%%%

\subsection{General Existence of Sharp Inter-Personal Equilibria}
In view of Example \ref{eg2}, the soft inter-personal equilibrium constructed in Theorem~\ref{t2} may not be a sharp one. That is to say, the general existence of a sharp inter-personal equilibrium is still in question. To resolve this, we impose appropriate regularity of $X$.

\begin{assumption}\label{asm:density}
$X$ has transition densities $(p_t)_{t\ge 1}$ with respect to a measure $\mu$ on $(\X, \B)$. That is, for each $t=1,2,...$, $p_t: \X\times \X\to \R_+$ is a Borel measurable function %and a measure $\mu$ on $(\R^d, \B)$ 
such that
\[
\P_x(X_t\in A) = \int_A p_t(x,y) \mu(dy),\quad \forall x\in\X\ \hbox{and}\ A\in\B.
\]
\end{assumption}

\begin{remark}
%Assumption 4.1 is not restrictive in view of the literature. 
When $\X$ is at most countable, Assumption 4.1 is trivially satisfied. When $\X$ is uncountable, the literature is focused on the case $\X=\R^d$ for some $d\ge 1$. In this case, many discrete-time Markov processes $X$ fulfill Assumption 4.1 (with $\mu$ being the Lebesgue measure). This includes, particularly, $X$ defined by the formula
$X_{t+1} := G(X_t,Z_t)$, $t\in \Z_+$, 
where $G$ is a Borel measurable function and $Z_t$ is a random variable independent of $X$ such that $G(x,Z_t)$ admits a probability density function for all $x\in\X$. This formula is commonly used in practical simulation of Markov processes; see e.g. \cite[Section 3]{Sart14} and \cite[Section 5]{AL11}.
\end{remark}

The next result, as a direct consequence of \cite[Lemma 6.5]{Kosorok-book-08}, will also play a crucial role.

\begin{lemma}\label{lem:minorant}
Let $\mu$ be a measure on $(\X,\B)$. For any $A\subseteq \X$, there exists a maximal Borel minorant of $A$ under $\mu$, defined as a set $A^\mu\in\B$ with $A^\mu\subseteq A$ such that for any $A'\in\B$ with $A'\subseteq A$, $\mu(A'\setminus A^\mu)=0$. %We call $A^\mu$ a maximal Borel minorant of $A$ under $\mu$.
\end{lemma}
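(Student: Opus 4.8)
The plan is to prove the lemma by a standard exhaustion (measurable‑kernel) argument: reduce first to the case of a finite measure, then realize a maximizer of $\mu$ over Borel subsets of $A$ as a countable union. As a preliminary step I would reduce to the situation where $\mu$ is finite. In all the settings where this lemma is applied (namely $\X$ at most countable with $\mu$ the counting measure, or $\X=\R^d$ with $\mu$ the Lebesgue measure) the measure $\mu$ is $\sigma$-finite, so one may fix a strictly positive Borel function $w$ with $\int_\X w\, d\mu<\infty$ and replace $\mu$ by the equivalent finite measure $\widetilde\mu(\cdot):=\int_{(\cdot)} w\, d\mu$; since $\mu$ and $\widetilde\mu$ have exactly the same null sets, the conclusion $\mu(A'\setminus A^\mu)=0$ is unaffected by this replacement. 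Hence one may assume $\mu(\X)<\infty$.

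Next I would set $s:=\sup\{\mu(B):B\in\B,\ B\subseteq A\}$, which is well-defined and finite since $0\le s\le\mu(\X)<\infty$ (the value $0$ being attained by $\emptyset\in\B$). Choosing Borel sets $B_n\subseteq A$ with $\mu(B_n)\to s$ and putting $A^\mu:=\bigcup_{n\in\N}B_n$, one has $A^\mu\in\B$ as a countable union of Borel sets, $A^\mu\subseteq A$, and $\mu(B_n)\le\mu(A^\mu)\le s$ for every $n$, which forces $\mu(A^\mu)=s$.

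For the maximality claim, fix any $A'\in\B$ with $A'\subseteq A$. Then $A^\mu\cup A'\in\B$ and $A^\mu\cup A'\subseteq A$, so $\mu(A^\mu\cup A')\le s=\mu(A^\mu)$ by definition of $s$. Since $\mu$ is finite and $A^\mu\subseteq A^\mu\cup A'$, additivity gives $\mu(A'\setminus A^\mu)=\mu(A^\mu\cup A')-\mu(A^\mu)\le 0$, hence $\mu(A'\setminus A^\mu)=0$. This $A^\mu$ is therefore the desired maximal Borel minorant.

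The argument is essentially routine; the only point requiring care is that a maximizing set for $s$ can be taken to be a genuine \emph{countable} union, so that it remains Borel, and this is exactly where the reduction to a finite (hence well-behaved under $\sigma$-additivity) measure is used — it is also the reason the statement tacitly presumes $\mu$ to be $\sigma$-finite, as for a non-$\sigma$-finite $\mu$ (e.g. counting measure on $\R$ with $A$ an uncountable non-Borel set) the conclusion can fail. Alternatively, one may bypass the argument entirely and quote \cite[Lemma 6.5]{Kosorok-book-08} directly.
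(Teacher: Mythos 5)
Your argument is correct, but it is a genuinely different route from the paper's: the paper gives no proof at all, deducing the lemma directly from the measurable-cover/minorant result in \cite[Lemma 6.5]{Kosorok-book-08}, whereas you supply the underlying exhaustion argument yourself (reduce to a finite measure equivalent to $\mu$, maximize $\mu(B)$ over Borel $B\subseteq A$, take a maximizing sequence and its countable union, and get maximality from finite additivity). Each step of your construction checks out: $A^\mu=\bigcup_n B_n$ is Borel, contained in $A$, attains the supremum $s$, and $\mu(A'\setminus A^\mu)=\mu(A^\mu\cup A')-\mu(A^\mu)\le 0$ is valid because the measure has been made finite. What your route buys is transparency about hypotheses: Kosorok's lemma is stated for probability spaces, so the citation tacitly involves the same reduction you perform, and your observation that the statement as written (``let $\mu$ be a measure'') needs $\sigma$-finiteness is a fair one --- for the counting measure on an uncountable $\X$ and a non-Borel $A$, any admissible $A^\mu$ would have to contain every singleton of $A$, hence equal $A$, which is impossible. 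This caveat is harmless for the paper, since in Assumption~\ref{asm:density} and its applications ($\X$ at most countable with counting measure, or $\X=\R^d$ with Lebesgue measure) $\mu$ is $\sigma$-finite; the citation route is shorter, while your self-contained proof is elementary and records exactly where finiteness (hence $\sigma$-finiteness after the equivalent-measure trick) is used.
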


%\begin{definition}
%A set $A\subseteq \R^d$ is called polar (with respect to $X$) if $\P_x(X_t\in A)$
%\end{definition}

Now, we are ready to present the general existence of a sharp inter-personal equilibrium. 

\begin{theorem}\label{t3}
Suppose Assumption~\ref{asm:density} holds. For each $i\in\{1,2\}$, assume $f_i\le h_i\le g_i$ and \eqref{supermartingale}. %that $(\delta_i(t)g_i(X_t^x))_{t\geq 0}$ is a supermartingale for all $x\in\X$. 
Then, there exists a sharp inter-personal equilibrium.
\end{theorem}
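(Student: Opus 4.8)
The plan is to build on Theorem~\ref{t2} and its limit $(S_\infty, T_\infty)$, but instead of a single alternating iteration we consider the \emph{family} of all pairs that could arise as iteration limits and extract a maximal one via Zorn's lemma. Concretely, define the poset $\mathcal{P}$ of pairs $(S,T)\in\B\times\B$ satisfying the ``almost sharp'' relations $\Gamma_1(T)=S$ and $\Gamma_2(S)\subseteq T$ (equivalently, $S\in\widehat\cE_1^T$, $T\in\cE_2^S$, and $T$ is as small as the iteration permits given $S$), ordered by declaring $(S,T)\preceq(S',T')$ when $S\subseteq S'$ and $T\supseteq T'$. Theorem~\ref{t2} shows $\mathcal{P}\neq\emptyset$. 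The goal is to locate a maximal element of $\mathcal{P}$ and then argue that maximality forces $\Gamma_2(S)=T$ exactly, whence Lemma~\ref{lem:sufficient} delivers a sharp inter-personal equilibrium.

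The first main step is to verify the chain hypothesis of Zorn's lemma: every totally ordered subset $\{(S_\lambda,T_\lambda)\}_{\lambda\in\Lambda}$ of $\mathcal{P}$ has an upper bound in $\mathcal{P}$. The natural candidate is $(S_*,T_*)$ with $S_*$ a maximal Borel minorant (Lemma~\ref{lem:minorant}) of $\bigcup_\lambda S_\lambda$ under $\mu$, and $T_*$ built analogously from $\bigcap_\lambda T_\lambda$ (taking complements so that the minorant construction applies to the decreasing family). Here is where Assumption~\ref{asm:density} is essential: without countability of $\X$, an arbitrary union of Borel sets need not be Borel, so we must pass to Borel minorants, and the transition densities guarantee that replacing a set by its $\mu$-minorant does not change any first entrance time $\rho_S$ or hitting time $\rho_S^+$ in a way that affects the functionals $J_i(x,\cdot,\cdot)$ (two sets differing by a $\mu$-null set give a.s.-equal entrance times into them after time $0$, since $\P_x(X_t\in A)$ depends on $A$ only through its $\mu$-measure for $t\ge 1$). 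One then checks $\Gamma_1(T_*)=S_*$ and $\Gamma_2(S_*)\subseteq T_*$, using the monotonicity of $\Gamma_i$ from Corollary~\ref{l8} together with the convergence Lemmas~\ref{l1}, \ref{l2} along the chain, plus a transfinite/limiting version of the dominated-convergence argument already used inside Theorem~\ref{t2}. This confirms $(S_*,T_*)\in\mathcal{P}$ and bounds the chain.

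By Zorn's lemma, $\mathcal{P}$ has a maximal element $(\hat S,\hat T)$. The second main step is to show maximality implies $\Gamma_2(\hat S)=\hat T$. Suppose not; then $\hat R:=\Gamma_2(\hat S)\subsetneq \hat T$ (strictly, and in fact on a $\mu$-non-null set after passing to minorants, else entrance times coincide and there is nothing to prove). Now run one more round of the alternating construction starting from $\hat R$ in place of $\hat T$: set $\hat S':=\Gamma_1(\hat R)$, and iterate. Since $\hat R\subseteq \hat T$, Corollary~\ref{l8} gives $\hat S'=\Gamma_1(\hat R)\supseteq \Gamma_1(\hat T)=\hat S$, and then $\Gamma_2(\hat S')\subseteq\Gamma_2(\hat S)=\hat R\subseteq \hat T$; iterating as in Theorem~\ref{t2} produces a limit $(\tilde S,\tilde T)\in\mathcal{P}$ with $\tilde S\supseteq \hat S$ and $\tilde T\subseteq \hat R\subsetneq \hat T$. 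Hence $(\hat S,\hat T)\prec(\tilde S,\tilde T)$, contradicting maximality. Therefore $\Gamma_2(\hat S)=\hat T$, and since already $\Gamma_1(\hat T)=\hat S$, Lemma~\ref{lem:sufficient} yields $(\hat S,\hat T)\in\widehat\cE$.

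The step I expect to be the main obstacle is the chain-bound verification, specifically showing that the minorant-based candidate $(S_*,T_*)$ genuinely lies in $\mathcal{P}$ rather than merely being an order-theoretic supremum of the underlying sets. This requires (i) controlling how $\Gamma_i$ interacts with uncountable unions/intersections of Borel sets through their minorants --- which is exactly what Assumption~\ref{asm:density} is designed to enable, since it reduces set-theoretic operations to $\mu$-measure comparisons that are invariant under the relevant entrance times --- and (ii) upgrading the dominated-convergence passages of Lemma~\ref{l2} and Theorem~\ref{t2} from sequential to chain limits, which is harmless because a single $\omega$ stabilizes after finitely many steps along any chain by Lemma~\ref{l1}. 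Once these are in place the Zorn's-lemma machinery and the final maximality argument are routine.
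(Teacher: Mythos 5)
Your overall architecture is the same as the paper's: apply Zorn's lemma to a poset of ``almost sharp'' pairs ordered by $S\subseteq S'$, $T\supseteq T'$, bound chains using the maximal Borel minorant of Lemma~\ref{lem:minorant} plus further alternating iteration, and show maximality forces $\Gamma_2(\hat S)=\hat T$ so that Lemma~\ref{lem:sufficient} applies; your maximality step is essentially the paper's Step~2 and is fine. The genuine gap is in the chain-bound step, which you yourself flag as the main obstacle but then resolve incorrectly. First, your candidate $(S_*,T_*)$ is not even an upper bound of the chain: a maximal Borel \emph{minorant} of $\bigcup_\lambda S_\lambda$ is a subset of the union and need not contain any $S_\lambda$, and your complement-based construction for $\bigcap_\lambda T_\lambda$ produces a Borel \emph{superset} of the intersection, not a subset of each $T_\lambda$. (The paper uses the minorant only on the $T$-side, applied directly to $T_0=\bigcap_\alpha T_\alpha$ to get $T_0^\mu\subseteq T_0$, and never needs a Borel version of the union: $S_1:=\Gamma_1(T_0^\mu)$ already contains every $S_\alpha$ by Corollary~\ref{l8}.) Second, in your poset an element must satisfy $\Gamma_1(T)=S$ exactly, and there is no reason why $\Gamma_1(T_*)=S_*$ should hold for a minorant-built pair; the upper bound has to be produced by running the alternating iteration started from the minorant and taking its limit.

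That, in turn, exposes the real technical issue your proposal papers over. After setting $S_1:=\Gamma_1(T_0^\mu)$ and $T_1:=\Gamma_2(S_1)$, one only knows $T_1\subseteq T_0$; $T_1$ need not be comparable with $T_0^\mu$, so Corollary~\ref{l8} alone cannot compare $\Gamma_1(T_1)$ with $\Gamma_1(T_0^\mu)$ and the monotone start of the iteration is not automatic. The paper bridges this with the claim $\Gamma_1(T_1\cup T_0^\mu)=\Gamma_1(T_0^\mu)$, proved by induction on the one-player iterates $S_1^n$, using (i) the polar property of $\mu$-null sets coming from Assumption~\ref{asm:density} --- which controls only times $t\ge 1$ --- and (ii) the disjointness $T\cap S_i^n(T)=\emptyset$ and $T\cap\Gamma_i(T)=\emptyset$ from Proposition~\ref{l3} to ensure the relevant states lie outside $T_1\cup T_0^\mu$, so that the time-$0$ terms are unaffected. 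Your blanket principle that ``replacing a set by a $\mu$-equivalent one does not change the entrance times in a way that affects the functionals'' is false as stated: $J_i(x,0,\rho_T)$ and $\Theta_i^T$, $\Phi_i^T$ depend on whether the \emph{current} state lies in $T$, and membership of a single point is not a $\mu$-a.e.\ notion; this is exactly why the bridging identity and the disjointness bookkeeping are needed. Without that argument (or a substitute), the monotonicity of the iteration built over the chain, and hence the existence of an upper bound in the poset, is not established.
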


\begin{proof}
Consider the set
$$A:=\{(S,T)\in\cE : \Gamma_1(T)\supseteq S\ \hbox{and}\ \Gamma_2(S)\subseteq T\}.$$
By \thmref{t2}, $A\neq\emptyset$. Now, define a partial order on $A$ as follows: for any $(S,T), (S',T')\in A$, 
\begin{equation}\label{p order}
(S,T)\succeq (S',T')\quad \text{if}\ S\supseteq S'\ \text{and}\ T\subseteq T'.
\end{equation}
{\bf Step 1:} {\it Showing that every totally ordered subset of $A$ has an upper bound in $A$.}\\
Let $(S_\alpha,T_\alpha)_{\alpha\in I}$ be a subset of $A$ that is totally ordered. Set $S_0:=\bigcup_{\alpha\in I} S_{\alpha}$ and $T_0:=\bigcap_{\alpha\in I} T_{\alpha}$. 
Recall the measure $\mu$ in Assumption~\ref{asm:density}. By Lemma~\ref{lem:minorant}, there exists a maximal Borel minorant of $T_0$ under $\mu$, which will be denoted by $T_0^\mu$. %$T_0^{\mu}\in\B$ with $T_0^{\mu}\subseteq T_0$ such that 
For any $T\in\B$ with $T\subseteq T_0$, since $\mu(T\setminus T_0^{\mu})=0$, we deduce from Assumption~\ref{asm:density} that $\P_x(X_t\in T\setminus T_0^{\mu}) = \int_{T\setminus T_0^{\mu}} p_t(x,y) \mu(dy)=0$ for all $x\in\X$ and $t\in\N$. It follows that 
\begin{equation}\label{polar}
\P_x(X_t\in T\setminus T_0^{\mu}\ \hbox{for some $t\in\N$}) =0\quad \forall x\in\X,\quad \hbox{whenever $T\in\B$ and $T\subseteq T_0$}. %, i.e. $T\setminus T_0^{\mu}$ is polar. 
\end{equation}
On the other hand, observe that 
\begin{equation}\label{sub, sup}
\Gamma_1(T)\supseteq S_0\ \hbox{for any $T\in\B$ with $T\subseteq T_0$},\quad \Gamma_2(S)\subseteq T_0\ \hbox{for any $S\in\B$ with $S\supseteq S_0$}.
\end{equation}
Indeed, for any $T\in\B$ with $T\subseteq T_0=\bigcap_{\alpha\in I} T_\alpha$, by Corollary~\ref{l8} and the definition of $A$, $\Gamma_1(T)\supseteq \Gamma_1(T_\alpha)\supseteq S_\alpha$ for all $\alpha\in I$, which implies $\Gamma_1(T)\supseteq S_0$. Similarly, for any $S\in\B$ with $S\supseteq S_0=\bigcup_{\alpha\in I} S_\alpha$, by Corollary~\ref{l8} and the definition of $A$, $\Gamma_2(S)\subseteq \Gamma_2(S_\alpha)\subseteq T_\alpha$ for all $\alpha\in I$, which implies $\Gamma_2(S)\subseteq T_0$.   

Now, define
\[
S_1:=\Gamma_1(T_0^\mu)\supseteq S_0\quad \hbox{and}\quad T_1:=\Gamma_2(S_1)\subseteq T_0. 
\]
Note that the first inclusion follows from $T_0^\mu\in\B$, $T_0^\mu\subseteq T_0$, and \eqref{sub, sup}. As $T_0^\mu\in\B$ implies $S_1:=\Gamma_1(T_0^\mu)\in\B$, we deduce from $S_1\in\B$, $S_1\supseteq S_0$, and \eqref{sub, sup} that the second inclusion above holds. With $T_1\in\B$ (thanks to $S_1\in\B$) and $T_1\subseteq T_0$, \eqref{polar} gives $\P_x(X_t\in T_1\setminus T_0^{\mu}\ \hbox{for some $t\in\N$}) =0$ for all $x\in\R^d$. This readily implies 
%\begin{equation}\label{123}
%\rho^+(T_1\cup T_0^\mu) = \rho^+(T_0^\mu)\quad\P_x\hbox{-a.s.},\quad \hbox{for}\ x\in\R^d.
%\end{equation}
%as well as $\rho(T_1\cup T_0^\mu) = \rho(T_0^\mu)$ $\P_x$-a.s. for all $x\notin T_1\cup T^\mu_0$. Since it is trivial that $\rho(T_1\cup T_0^\mu) = \rho(T_0^\mu)=0$ $\P_x$-a.s. for $x\in T^\mu_0$, we have
\begin{equation}\label{123'}
\rho_{T_1\cup T_0^\mu} = \rho_{T_0^\mu}\quad\P_x\hbox{-a.s.},\quad \hbox{for}\ x\notin T_1\cup T^\mu_0. 
\end{equation}

We claim that $\Gamma_1(T_1\cup T_0^\mu) = \Gamma_1(T_0^\mu)$. %Since Corollary~\ref{l8} already entails $\Gamma_1(T_1\cup T_0^\mu) \subseteq \Gamma_1(T_0^\mu)$, we will focus on showing $\Gamma_1(T_1\cup T_0^\mu) \supseteq \Gamma_1(T_0^\mu)$. 
By the definition of $\Gamma_1$ in \eqref{S_infty}, it suffices to show that $S^n_1(T_1\cup T^\mu_0)= S^n_1(T^\mu_0)$ for all $n\in \N$. %, where $S^n(T)$ denotes the first player's $n$-th iteration of $\Phi$ given that the second player uses $T$.  
%We only need to show $\Gamma_1(T_1\cup T_0^\mu) \supseteq \Gamma_1(T_0^\mu)$ as the other inclusion is ensured by Lemma~\ref{l8}. 
First, as $T_1 = \Gamma_2(S_1)$, the last assertion of Proposition~\ref{l3} implies $ T_1\cap S_1=\emptyset$. With $S_1=\Gamma_1(T^\mu_0) = \bigcup_{n\in\N} S_1^n(T^\mu_0)$, we obtain %from $T_1\cap S_1=\emptyset$ that 
\begin{equation}\label{dudu}
T_1\cap S_1^n(T^\mu_0)=\emptyset,\quad \forall n\in\N. 
\end{equation}
Now, for $n=1$, \eqref{Phi inclusion} implies $S_1^1(T_1\cup T^\mu_0)= \Phi_1^{T_1\cup T^\mu_0}(\emptyset)\subseteq\Phi_1^{T^\mu_0}(\emptyset)=S_1^1(T^\mu_0)$. For any $x\in S_1^1(T^\mu_0)$, due to $T_0^\mu\cap S_1^1(T^\mu_0)=\emptyset$ (by Proposition~\ref{l3}) and \eqref{dudu}, we must have $x\notin T_1\cup T^\mu_0$. Observe that
\begin{equation}\label{first}
J_1(x,0,\rho_{T_1\cup T^\mu_0})= J_1(x,0,\rho_{T^\mu_0}) > V^{T^\mu_0}_1(x,\emptyset)=V^{T_1\cup T^\mu_0}_1(x,\emptyset), 
\end{equation}
where the first and third equalities follow from \eqref{123'} and the inequality stems from the definition of $S_1^1(T^\mu_0)=\Phi_1^{T^\mu_0}(\emptyset)$ in \eqref{Phi_i^T}. This shows that $x\in \Phi_1^{T_1\cup T^\mu_0}(\emptyset)=S_1^1(T_1\cup T^\mu_0)$. Hence, we obtain $S_1^1(T^\mu_0)\subseteq S_1^1(T_1\cup T^\mu_0)$ and thus conclude $S_1^1(T_1\cup T^\mu_0)=S_1^1(T^\mu_0)$. Suppose that $S_1^k(T_1\cup T^\mu_0)=S_1^k(T^\mu_0)$ for some $k\ge 1$. By \eqref{Phi inclusion} again, $S^{k+1}_1(T_1\cup T^\mu_0)\subseteq S^{k+1}_1(T^\mu_0)$. Fix $x\in S_1^{k+1}(T^\mu_0)$. By using Proposition~\ref{l3} and \eqref{123'} as above, we get $x\notin T_1\cup T^\mu_0$. If $x\in S_1^{k}(T_1\cup T^\mu_0)$, then $x\in S_1^{k+1}(T_1\cup T^\mu_0)$ trivially, by the definition of $S_1^{k+1}(T_1\cup T^\mu_0)$ in \eqref{S_n}. If $x\notin S_1^{k}(T_1\cup T^\mu_0)=S_1^k(T^\mu_0)$, then by the definition of $S_1^{k+1}(T^\mu_0)$,
\[
J_1(x,0,\rho_{T^\mu_0}) > V^{T^\mu_0}_1(x,S_1^k(T^\mu_0)). 
\]
By \eqref{123'} and the above inequality, we may argue similarly as in \eqref{first} to get
\begin{align*}
J_1(x,0,\rho_{T_1\cup T^\mu_0})= J_1(x,0,\rho_{T^\mu_0}) > V^{T^\mu_0}_1(x,S_1^k(T^\mu_0))&=V^{T^\mu_0}_1(x,S_1^k(T_1\cup T^\mu_0))\\
&=V^{T_1\cup T^\mu_0}_1(x,S_1^k(T_1\cup T^\mu_0)),
\end{align*}
where the second equality is due to $S^k(T^\mu_0)=S^k(T_1\cup T^\mu_0)$. It follows that $x\in S^{k+1}(T_1\cup T_0^\mu)$. Hence, we obtain $S^{k+1}(T^\mu_0)\subseteq S^{k+1}(T_1\cup T^\mu_0)$ and thus conclude $ S^{k+1}(T_1\cup T^\mu_0)=S^{k+1}(T^\mu_0)$. By induction, we have established $S^{n}(T^\mu_0)= S^{n}(T_1\cup T^\mu_0)$ for all $n\in\N$, as desired. 

By Corollary~\ref{l8} and $\Gamma_1(T_1\cup T_0^\mu) = \Gamma_1(T_0^\mu)$,   
\begin{align*}
S_2 := \Gamma_1(T_1) \supseteq \Gamma_1(T_1\cup T_0^\mu) = \Gamma_1(T_0^\mu)=S_1\quad\hbox{and}\quad T_2 := \Gamma_2(S_2) \subseteq \Gamma_2(S_1) =T_1.
\end{align*}
Now, by defining $S_{n+1}:=\Gamma_1(T_n)$ and $T_{n+1}:=\Gamma_2(S_{n+1})$ for all $n\ge 3$, we can follow the same argument in the proof of Theorem \ref{t2} to show that $(S_n)$ is nondecreasing, $(T_n)$ is nonincreasing, and $(S_\infty,T_\infty)\in A$ with $S_\infty:=\bigcup_n S_n$ and $T_\infty:=\bigcap_n T_n$. By construction, $S_\infty\supseteq S_0\supseteq S_\alpha$ and $T_\infty\subseteq T_0\subseteq T_\alpha$ for all $\alpha\in I$. Hence, $(S_\infty,T_\infty)\in A$ is an upper bound for $(S_\alpha,T_\alpha)_{\alpha\in I}$.
\vspace{0.05in}\\
{\bf Step 2:} {\it Applying Zorn's lemma.}\\
As every totally ordered subset of $A$ is shown to have an upper bound in $A$, Zorn's lemma implies that there exists a maximal element in $A$, denoted by $(\bar S,\bar T)\in A$. We claim that $(\bar S,\bar T)\in\widehat\cE$. Set $S_0:=\bar S$, $T_0 := \bar T$, and define
\[
S_{n+1}:=\Gamma_1(T_n)\quad \hbox{and}\quad T_{n+1}:=\Gamma_2(S_{n+1})\quad \forall n\ge 0. 
\]
Thanks to $\Gamma_1(T_0)\supseteq S_0$ and $\Gamma_2(S_0)\subseteq T_0$ (as $(S_0,T_0)=(\bar S, \bar T) \in A$), we may apply Corollary~\ref{l8} recursively to show that $(S_n)$ is nondecreasing and $(T_n)$ is nonincreasing. Then, by the same argument in the proof of Theorem \ref{t2}, we obtain $(S_\infty,T_\infty)\in A$ with $S_\infty:=\bigcup_n S_n$ and $T_\infty:=\bigcap_n T_n$. By construction, $S_\infty\supseteq S_0= \bar S$ and $T_\infty\subseteq T_0=\bar T$. But since $(\bar S,\bar T)$ is a maximal element of $A$ (under the partial order \eqref{p order}), we must have $S_\infty=S_0$ and $T_\infty=T_0$. This in particular implies $S_1= S_0$ and $T_1=T_0$, so that
\[
\Gamma_1(\bar T) = \Gamma_1(T_0) = S_1 =S_0 = \bar S\quad \hbox{and}\quad \Gamma_2(\bar S) = \Gamma_2(S_0) = \Gamma_2(S_1)= T_1 =T_0 = \bar T. 
\]
By Lemma~\ref{lem:sufficient}, this readily implies $(\bar S,\bar T)\in\widehat\cE$. 
\end{proof}

\begin{remark}
In view of \eqref{J}-\eqref{F}, the condition $f_i \le h_i \le g_i$ (in Theorems~\ref{t2} and \ref{t3}) encourages each player to wait/continue until the other player stops, so as to obtain a larger reward. Consequently, each player faces the tradeoff between the potential (generous) gain from outlasting the other player and the cost of waiting that enlarges with time (due to discounting and possible loss of opportunity). That is, our Dynkin game exemplifies the ``war of attrition'' in game theory. The negotiation example in Section \ref{sec:application} below well demonstrates this ``war'': Each firm intends to wait until the other firm gives in so as to seal the best deal, while subject to the impact of discounting and the varying cost of project initiation.
\end{remark}

\begin{remark}\label{rem:randomized}
In a classical (time-consistent) nonzero-sum Dynkin game, the condition $f_i\le h_i\le g_i$ ensures that a Nash equilibrium, as a tuple of pure stopping times $(\tau^*,\sigma^*)$, exists; see e.g., \cite{HZ09}. Without the condition $f_i\le h_i\le g_i$, a Nash equilibrium $(\tau^*,\sigma^*)$ need not exist, as shown in \cite{LS13}. One needs to consider randomized strategies to possibly establish the existence of a Nash equilibrium, as a tuple of randomized strategies. Still, in some cases, only an $\eps$-Nash equilibrium is known to exist; see e.g., \cite{SS04, F05, LS13}. 

In this paper, as we assume $f_i\le h_i\le g_i$ (cf. Theorems~\ref{t2} and \ref{t3}), our focus on pure strategies is consistent with the literature. If we drop the condition $f_i\le h_i\le g_i$, many arguments will no longer hold and we expect the use of randomized strategies indispensable. Randomized strategies for time-inconsistent stopping problems have recently been proposed and analyzed by \cite{BZZ19} in discrete time and by \cite{CL20} in continuous time. It is of interest as future research to modify their definitions and allow for randomized strategies in our Dynkin game.
\end{remark}

%%%%%%%%%%%%%%%%%%%%%%
%%%%%%%%%%%%%%%%%%%%%%

\subsection{Discussion on the Supermartingale Condition}\label{subsec:supermartingale} 
It is worth noting that while the supermartingale condition \eqref{supermartingale} is required in Theorems~\ref{t2} and \ref{t3}, it does not play a role in Theorem~\ref{t1}. Because the one-player iterative procedure \eqref{S_n} is by construction monotone, it  converges without the need of any other condition. It is much more complicated for the two-player alternating iterative procedure \eqref{alternating} to converge. The monotonicity of \eqref{S_n} only ensures that each iteration (performed by one of the two players) converges to a stopping policy, but says nothing about whether the two resulting sequences of policies (one sequence for each player) will actually converge. It is the supermartingale condition \eqref{supermartingale} that brings about the monotonicity for these two sequences of policies (on strength of Corollary~\ref{l8}), leading to an inter-personal equilibrium between the two players. 

When \eqref{supermartingale} fails, the monotonicity in Corollary~\ref{l8} no longer holds in general, and there may exist {\it no} inter-personal equilibrium, soft or sharp. To demonstrate this, consider a three-state model
\begin{equation}\label{three state}
\X=\{a,b,c\}\quad \hbox{with}\quad \P_x(X_1=y)
\begin{cases}
=0,\quad \hbox{for}\ (x,y)=(a,c),\\
>0,\quad \hbox{otherwise}. %\ (x,y)\in\X\times\X\setminus \{(a,c)\}.
\end{cases}
\end{equation}
% and assume the one-step transition probabilities are all positive except that $\P_a(X_1=c)=0$. 
Given $M>0$, define the payoff functions by
\begin{align}\label{f,g}
%\begin{split}
f_1(a)&=1,& g_1(a)&=M^2,& f_2(a)&=M,& g_2(a)&=M+1,\notag\\
f_1(b)&=M, & g_1(b)&=M+1,& f_2(b)&=1,& g_2(b)&=2,\\
f_1(c)&=1,& g_1(c)&=2,& f_2(c)&=M^2,& g_2(c)&=M^2+1,\notag
%\end{split}
\end{align}
and
\begin{equation}\label{h}
h_i(x)=\frac{1}{2}(f_i(x)+g_i(x)),\quad \forall x\in\X\ \hbox{and}\ i\in\{1,2\}.
\end{equation}

\begin{proposition}\label{prop:no weak E} 
Under \eqref{three state}, \eqref{f,g}, and \eqref{h}, as $M>0$ is large enough, \eqref{supermartingale} is violated and there exists no soft inter-personal equilibrium.
\end{proposition}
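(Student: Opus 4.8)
The plan is to show that under \eqref{three state}, \eqref{f,g}, and \eqref{h}, for $M$ large enough, no pair $(S,T)\in\B\times\B$ can simultaneously satisfy $\Theta_1^T(S)=S$ and $\Theta_2^S(T)=T$. Since $\B = 2^\X$ here and $\X=\{a,b,c\}$, there are only $2^3=8$ candidate subsets for each of $S$ and $T$, hence finitely many pairs to rule out. Rather than brute-forcing all $64$ combinations, I would first use the structure of the payoffs to sharply constrain which states must (or must not) lie in an equilibrium policy.

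First, I would record the key orderings implied by \eqref{f,g} for large $M$: for Player 1, at state $a$ the ``giving-in'' payoff $g_1(a)=M^2$ dwarfs everything, while at states $b,c$ her payoffs are $O(M)$ or $O(1)$; symmetrically, for Player 2 the state $c$ carries the huge payoff $g_2(c)=M^2+1$, while at $a,b$ her payoffs are $O(M)$ or $O(1)$. The asymmetry in the transition structure \eqref{three state} is crucial: from $a$ one cannot reach $c$ in one step. Because $h_i=\tfrac12(f_i+g_i)\le g_i$, both players satisfy the hypothesis of Proposition~\ref{l3}, so the improving operators $\Theta_i^T$ and the values $J_i(x,\cdot,\cdot)$ are available; in particular, for $x\in T$ we have $J_1(x,0,\rho_T)=h_1(x)$, which is relevant when deciding membership of states already in the opponent's region.

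The heart of the argument is a short case analysis, which I would organize around whether $c\in T$ and whether $a\in S$. The intuition is: Player 2, craving the prize $g_2(c)$ at $c$ (obtained by outlasting Player 1 there), will keep $c$ out of her own stopping region whenever there is positive probability of Player 1 stopping at $c$; but Player 1, in turn, will include $c$ in her region only under conditions that depend on what Player 2 does at $a$ and $b$, and these requirements will be shown to be mutually exclusive with the corresponding requirements forced on Player 2. Concretely, I expect to show: (i) if $a\notin S$ then examining $\Theta_2^S(T)$ at $b$ or $c$ (using that from $b$ one reaches $a$ with positive probability, where $J_2$ is large) forces a contradiction with $\Theta_1^T(S)=S$ at $a$; and (ii) if $a\in S$, then Player 2's best response drags $c$ into $T$ or forces $b$ out, after which re-examining Player 1's fixed-point condition at $b$ or $c$ — where the $O(M)$ versus $O(1)$ comparison flips depending on $M$ — again fails. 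Each sub-case reduces to comparing an $O(M^2)$, $O(M)$, or $O(1)$ quantity against another, so for $M$ sufficiently large every comparison is decided by its leading term, and the contradictions are robust.

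The main obstacle I anticipate is bookkeeping the one-step reachability carefully: because $\P_x(X_1=y)>0$ for all pairs except $(a,c)$, the first-hitting and first-entrance times $\rho_S$, $\rho^+_S$ differ in subtle ways at states that belong to $S$, and the expectations $J_i(x,\rho^+_S,\rho_T)$ must be evaluated by conditioning on the first step and then invoking the strong Markov property (as in \eqref{smaller}). Getting the inequalities in the right direction at the ``boundary'' states — especially at $b$, which communicates with both $a$ and $c$ — is where the large-$M$ asymptotics must be applied with care; once the leading-order terms are identified the contradictions are immediate, but isolating them cleanly across all sub-cases is the delicate part. Finally, since the non-existence of a \emph{soft} inter-personal equilibrium a fortiori rules out a sharp one (every sharp equilibrium is soft), the stated conclusion follows.
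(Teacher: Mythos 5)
Your overall strategy (a finite verification over the $8\times 8$ candidate pairs, with all comparisons settled by large-$M$ asymptotics) is the same in spirit as the paper's, but the proposal stops short of the actual proof: the decisive case analysis is only announced, not carried out, and the structure you anticipate for it does not hold up. For instance, your claim (i) — that when $a\notin S$ the contradiction appears in Player 1's fixed-point condition \emph{at $a$} — fails already for the pair $S=\{b\}$, $T=\{a,c\}$ (which is exactly a pair one must rule out, since $\cE_2^{\{b\}}=\{\{a,c\}\}$ by \eqref{identities}): there the condition at $a$ is perfectly consistent, and the failure occurs at $b$, where $J_1(b,0,\rho_T)=M$ is beaten by the continuation value $\delta_1(1)\big(p_{ba}M^2+p_{bb}M+2p_{bc}\big)$. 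More generally, the paper's proof shows that for some opponent policies Player 1 has \emph{no} intra-personal equilibrium at all ($\cE_1^{\{a\}}=\cE_1^{\{a,c\}}=\emptyset$), a phenomenon your proposed dichotomy (``$c\in T$'' vs.\ ``$a\in S$'') does not anticipate and which makes a clean two-case argument unlikely; the paper instead computes $\cE_2^S$ for every $S$ (each is a singleton) and checks that the unique candidate $T$ never satisfies $S\in\cE_1^T$. A related technical gap: several of the needed comparisons are between quantities of the \emph{same} order in $M$ (e.g.\ $f_1(b)=M$ versus a continuation value $k_0+k_1M$), so they are not ``decided by the leading term'' as you assert; the paper resolves them via the parametrization \eqref{J_i expression}--\eqref{k_0>0}, where $\rho_T>0$ forces $k_0<2$, $k_1<1$, $k_2<1$ (coefficients bounded by $\delta_i(1)<1$ through discounting), which is what makes the large-$M$ conclusion valid. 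Without either executing the full check or supplying a correct structural shortcut, the nonexistence claim is not established.

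Two smaller omissions: the statement also asserts that \eqref{supermartingale} is violated, which your proposal never addresses (the paper does it in one line at state $c$: $\E_c[\delta_1(1)g_1(X_1)]=\delta_1(1)\big(p_{ca}M^2+p_{cb}(M+1)+2p_{cc}\big)>2=g_1(c)$ for $M$ large); and the closing remark about sharp equilibria is unnecessary, since the proposition only concerns soft ones.
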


\begin{proof}
Let $p_{xy}:=\P_x(X_1=y)$ for all $x,y\in \X$. Then,
\[
\E_c[\delta_1(1)g_1(X_1)] = \delta_1(1) \left(p_{ca} M^2 + p_{cb} (M+1)+p_{cc}\cdot2\right) > 2 = g_1(c),
\]
where the inequality holds as $M>0$ is large enough. This readily shows that \eqref{supermartingale} is violated.

%For any $x,y\in\X$ and $S,T\in\B=2^\X$, we define $p_{x,S,T}^y:= \P_x(X_{\rho^+_{S\cup T}}=y)$. 
For any $S,T\in \B=2^\X$ and $x\in\X$, we deduce from the definitions of $f_i$, $h_i$, $g_i$, $i\in\{1,2\}$, that 
for each $i\in\{1,2\}$, 
%there exist $k^0_{i,x}\in [0,2)$ and $k^1_{i,x}$, $k^2_{i,x}\in [0,1)$ such that
\begin{equation}\label{J_i expression}
J_i(x,\rho^+_S,\rho_T)= k_0 + k_1 M + k_2 M^2,\quad \hbox{for some}\ k_0\in [0,2]\ \hbox{and}\ k_1, k_2\in [0,1].
\end{equation}
Moreover,
\begin{equation}\label{k_0>0}
\rho_T>0 \iff k_0<2,\ k_1<1,\ \hbox{and}\ k_2<1. 
\end{equation}
%\begin{itemize}
%\item [(i)] If $x\in T$, $J_i(x,\rho^+_S,\rho_T) = g_i(x)\quad \hbox{for}\ i\in\{1,2\}$.
%\item [(ii)] If $x\notin T$, then for each $i\in\{1,2\}$, there exist $k^0_{i,x}\in [0,2)$ and $k^1_{i,x}$, $k^2_{i,x}\in [0,1)$ such that
%\begin{equation}
%J_i(x,\rho^+_S,\rho_T)= k^0_{i,x} + k^1_{i,x} M + k^2_{i,x} M^2,
%\end{equation}
%where
%\begin{align*}
%k^2_{1,x}>0 &\iff %p_{x,S,T}^a>0 
%\P_x(X_{\rho^+_{S\cup T}}=a)>0,\ a\in T;\\
%k^1_{1,x}>0 &\iff \P_x(X_{\rho^+_{S\cup T}}=b);\\%p_{x,S,T}^b>0;\\
%k^0_{1,x}>0 &\iff p_{x,S,T}^c>0\ \hbox{or}\ p_{x,S,T}^a>0,\ a\notin T;\\
%k^2_{2,x}>0 &\iff p_{x,S,T}^c>0,;\\
%k^1_{2,x}>0 &\iff p_{x,S,T}^a>0;\\
%k^0_{2,x}>0 &\iff p_{x,S,T}^b>0\ \hbox{or}\ p_{x,S,T}^a>0,\ a\in T\ \hbox{or}\ p_{x,S,T}^c>0,\ c\in T;
%\end{align*}
%\end{itemize}
By \eqref{J_i expression} and \eqref{k_0>0}, it can be checked that 
\begin{equation}\label{identities}
\begin{split}
&\cE_2^\X = \cE_2^{\{a,c\}} = \cE_2^{\{c\}}= \{\emptyset\}\ \ \hbox{but}\ \ \cE_1^\emptyset=\{\{b\}\},\\
& \cE_2^{\{a,b\}} =\cE_2^{\{a\}} = \cE_2^\emptyset= \{\{c\}\}\ \ \hbox{but}\ \ \cE_1^{\{c\}} = \{\{b\}\},\\
& \cE_2^{\{b,c\}} = \{\{a\}\}\ \ \hbox{but}\ \ \cE_1^{\{a\}} = \emptyset,\\
& \cE_2^{\{b\}} = \{\{a,c\}\}\ \ \hbox{but}\ \ \cE_1^{\{a,c\}}=\emptyset.
\end{split}
\end{equation}
This readily shows that there exists no $(S,T)\in 2^\X\times 2^\X$ such that $S\in\cE_1^T$ and $T\in\cE_2^S$, i.e. there exists no soft inter-personal equilibrium. 

In the following, we will show the derivation of $\cE_1^{\{c\}} = \{\{b\}\}$ in detail, while all other identities in \eqref{identities} can be proved in a similar manner. 
It follows directly from the definitions of $f_1$ and $g_1$ that $J_1(b,0,\rho_{\{c\}}) = f_1(b) =M$ and $J_1(b,\rho^+_\emptyset,\rho_{\{c\}})= k_0<2$. Hence, with $M>0$ large enough,
\[
b\in \left\{x\in\X: J_1(x,0,\rho_{\{c\}})> J_1(x,\rho^+_\emptyset,\rho_{\{c\}})\right\}= \Theta_1^{\{c\}}(\emptyset),
\]
which implies $\Theta_1^{\{c\}}(\emptyset)\neq \emptyset$. Also, as $J_1(b,\rho^+_{\{a\}},\rho_{\{c\}})=k_0<2$, we can similarly conclude that with $M>0$ large enough,
\[
b\in \left\{x\in\{b,c\}: J_1(x,0,\rho_{\{c\}})> J_1(x,\rho^+_{\{a\}},\rho_{\{c\}})\right\}\subseteq  \Theta_1^{\{c\}}(\{a\}),
\]
which implies $\Theta_1^{\{c\}}(\{a\})\neq \{a\}$. Note that
\begin{equation}\label{c}
J_1(c,0,\rho_{\{c\}}) = h_1(c)< g_1(c) = J_1(c,\rho^+_S,\rho_{\{c\}}),\quad \forall S\in 2^\X.
\end{equation}
By taking $S=\{c\}$, $S=\{b,c\}$, and $S=\X$ in \eqref{c}, we immediately see that $c\notin \Theta^{\{c\}}_1(\{c\})$, $c\notin \Theta^{\{c\}}_1(\{b,c\})$, and $c\notin \Theta^{\{c\}}_1(\X)$. We therefore conclude $\Theta^{\{c\}}_1(\{c\})\neq\{c\}$, $\Theta^{\{c\}}_1(\{b,c\})\neq\{b,c\}$, and $\Theta^{\{c\}}_1(\X)\neq\X$. Now, observe that $J_1(b, \rho^+_{\{b\}},\rho_{\{c\}})$ and $J_1(a, \rho^+_{\{b\}},\rho_{\{c\}})$ are both of the form $k_0+k_1 M$ with $k_1<1$ (recall \eqref{k_0>0}). Thus, with $M>0$ large enough,
\begin{align}
J_1(b,0,\rho_{\{c\}}) &= f_1(b)= M > k_0+k_1 M = J_1(b, \rho^+_{\{b\}},\rho_{\{c\}}),\\
J_1(a,0,\rho_{\{c\}}) &= f_1(a)=1< k_0+k_1 M = J_1(a, \rho^+_{\{b\}},\rho_{\{c\}}).\label{a}
\end{align}
This, together with \eqref{c}, implies $\Theta^{\{c\}}_1(\{b\})=\{b\}$. Since $J_1(a,\rho^+_{\{a,b\}},\rho_{\{c\}})$ (resp. $J_1(a,\rho^+_{\{a,c\}},\rho_{\{c\}})$) is also of the form $k_0+k_1 M$, the inequality in \eqref{a} indicates that with $M>0$ large enough, $a\notin \Theta^{\{c\}}_1(\{a,b\})$ (resp. $a\notin \Theta^{\{c\}}_1(\{a,c\})$). Hence, we conclude $\Theta^{\{c\}}_1(\{a,b\})\neq\{a,b\}$ and $\Theta^{\{c\}}_1(\{a,c\})\neq\{a,c\}$. In view of the above derivations, $\{b\}$ is the only intra-personal equilibrium for Player 1 w.r.t. Player 2's policy $\{c\}$, i.e. $\cE_1^{\{c\}}=\{\{b\}\}$. 
%Indeed, in view of \eqref{Theta}, 
%\[
%\Theta^\X_2(S) = \{x\in S : h_2(x)\ge g_2(x)\}\cup  \{x\notin S : h_2(x)> g_2(x)\} =\emptyset\quad \forall S\in 2^\X, 
%\]
%which readily shows $\cE_2^\X=\{\emptyset\}$. 
%%\begin{align*}
%%&\cE_2^\X = \emptyset,\quad \cE_1^\emptyset=\{b\},\quad \cE_2^{\{b\}} = \{a,c\},\quad \cE_1^{\{a,c\}}=\emptyset,\quad \cE_2^\emptyset = \{c\},\quad \cE_1^{\{c\}} = \{b\},\\
%%&\cE_2^{\{a,b\}} = \{c\},\quad \cE_2^{\{a,c\}} = \emptyset,\quad \cE_2^{\{b,c\}} = \{a\},\quad \cE_1^{\{a\}} = \emptyset,\quad \cE_2^{\{a\}} = \{c\},\quad \cE_2^{\{b\}} = \{a,c\},\quad \cE_2^{\{c\}} = \emptyset.
%%\end{align*}
%\begin{align*}
%\Theta^\emptyset_2(\emptyset) &= \{x\in\X:J_2(x,0,\rho_\emptyset)> J_2(x,\rho^+_\emptyset,\rho_\emptyset)\}=\{x\in\X:f_2(x)> 0\}=\X,\\
%\Theta^\emptyset_2(\X) &= \{x\in\X:J_2(x,0,\rho_\emptyset)\ge J_2(x,\rho^+_\X,\rho_\emptyset)\}=\{x\in\X:f_2(x)> 0\}=\X
%\end{align*}
%\begin{align*}
%\Theta^\emptyset_2(\{a\}) &= \{x\in\{a\}:J_2(x,0,\rho_\emptyset)\ge J_2(x,\rho^+_{\{a\}},\rho_\emptyset)\}\cup \{x\in\%{b,c\}:J_2(x,0,\rho_\emptyset)> J_2(x,\rho^+_{\{a\}},\rho_\emptyset)\}\\
%&=\{x\in\{a\}: f_2(x)> 0\}
%\end{align*}
\end{proof}

\begin{remark}
In \eqref{identities}, $\cE_i^T$ is a singleton for all $i\in\{1,2\}$ and $T\in 2^\X$. The single element in $\cE_i^T$ is trivially the optimal intra-personal equilibrium, which can be recovered by $\Gamma_i(T)$ in \eqref{S_infty} thanks to Theorem \ref{t1}. In other words, \eqref{identities} implies
\begin{equation}\label{identities'}
\begin{split}
&\Gamma_2(\X) = \Gamma_2(\{a,c\}) =\Gamma_2(\{c\}) = \emptyset,\quad \Gamma_2(\{a,b\}) = \Gamma_2(\{a\}) =\Gamma_2(\emptyset) = \{c\},\\
& \Gamma_2(\{b,c\})=\{a\},\quad \Gamma_2(\{b\})=\{a,c\},\\
& \Gamma_1(\emptyset)= \Gamma_1(\{c\})=\{b\},\quad \Gamma_1(\{a\}) = \Gamma_1(\{a,c\})=\emptyset.
\end{split}
\end{equation}
This clearly shows that the monotonicity of $T\mapsto \Gamma_i(T)$ %(established in Corollary~\ref{l8} under \eqref{supermartingale}) 
fails: Despite the inclusion $\{c\}\subseteq\{b,c\}\subseteq \X$, we have $\Gamma_2(\{c\}) =\Gamma_2(\X) = \emptyset\subseteq\{a\}=\Gamma_2(\{b,c\})$.
%; similarly, despite $\emptyset\subseteq \{a\}\subseteq \{a,c\}$, we have $\Gamma_1(\{a\}) = \Gamma_1(\{a,c\}) = \emptyset\subseteq\{b\}=\Gamma_1(\{a,c\})$.
%``$\Gamma_i(T)\supseteq \Gamma_i(R)$ whenever $T\subseteq R$'' in Corollary~\ref{l8} fails: 
\end{remark}

\begin{remark}
Another way to interpret \eqref{identities'} is that the alternating iterative procedure \eqref{alternating} will never converge, failing to provide any soft inter-personal equilibrium. Specifically, \eqref{identities'} indicates that the alternating iterations will always lead to loops, as listed below where Player 1's stopping policies are underlined and Player 2's stopping policies are double underlined.
\vspace{0.05in}

1. $\uline{\emptyset}\to \uuline{\{c\}}\to\uline{\{b\}}\to\uuline{\{a,c\}}\to\uline{\emptyset}\to...$.

2. $\uline{\{a\}}\to\uuline{\{c\}}\to\uline{\{b\}}\to\uuline{\{a,c\}}\to\uline{\emptyset}\to\uuline{\{c\}}\to...$.

3. $\uline{\{b\}}\to\uuline{\{a,c\}}\to\uline{\emptyset}\to\uuline{\{c\}}\to\uline{\{b\}}\to...$.

4. $\uline{\{c\}}\to\uuline{\emptyset}\to\uline{\{b\}}\to\uuline{\{a,c\}}\to\uline{\emptyset}\to\uuline{\{c\}}\to\uline{\{b\}}\to...$.

5. $\uline{\{a,b\}}\to\uuline{\{c\}}\to\uline{\{b\}}\to\uuline{\{a,c\}}\to\uline{\emptyset}\to\uuline{\{c\}}\to...$.

6. $\uline{\{a,c\}}\to\uuline{\emptyset}\to\uline{\{b\}}\to\uuline{\{a,c\}}\to\uline{\emptyset}\to\uuline{\{c\}}\to\uline{\{b\}}\to...$.

7. $\uline{\{b,c\}}\to\uuline{\{a\}}\to\uline{\emptyset}\to\uuline{\{c\}}\to\uline{\{b\}}\to\uuline{\{a,c\}}\to\uline{\emptyset}\to...$.

8. $\uline{\{a,b,c\}}\to\uuline{\emptyset}\to\uline{\{b\}}\to\uuline{\{a,c\}}\to\uline{\emptyset}\to\uuline{\{c\}}\to\uline{\{b\}}\to...$.
\end{remark}

\begin{remark}
As $\delta$ is not specified in Proposition~\ref{prop:no weak E}, the result admits an interesting implication for classical (time-consistent) nonzero-sum Dynkin games. To see this, let $\delta$ be an exponential discount function so that there is no time inconsistency. Proposition~\ref{prop:no weak E} shows that even when the state process $X$ is time-homogeneous and payoff functions are as simple as \eqref{f,g}-\eqref{h}, the Dynkin game has no ``time-homogeneous'' Nash equilibrium---a Nash equilibrium as a tuple of two stopping regions $(S,T)$, one for each player. Indeed, if such a Nash equilibrium existed, it would be a sharp inter-personal equilibrium under Definition~\ref{def:sharp}. If a Nash equilibrium in fact exists, it must be of a more complicated form (which is also suggested by the constructions in \cite{HZ09, LS13}). 
%{\color{red}Since $f^i<h^i<g^i$ for $i=1,2$, by classical results of non-zero-sum Dynkin games (See e.g., \cite[Theorem 2.2]{HZ09}) there exists a Nash equilibrium. Note that in this example we do not specify the discounting and thus $\delta$ can be exponential. This shows that even if we have exponential discounting and a time-homogeneous Markov process, there may still not exist any time-homogeneous Nash or soft intra-personal equilibria. Therefore, the example also indicates that exponential discounting may not rule out time consistency at the inter-personal level for a non-zero-sum  game, in contrast to the zero-sum case.}
\end{remark}

%\begin{remark}
%Note that if we allow strategies to be time-dependent, then the existence of soft intra-personal equilibria trivially holds assuming the state space $\X$ is at most countable and $f^i\leq h^i\leq g^i,\ i=1,2$ (in this case we do not assume decreasing impatience). Indeed, as $f^i\leq h^i\leq g^i,\ i=1,2$, we know that the related one-period non-zero-sum stopping game has a Nash equilibrium. Then by a backward construction, we can find a soft intra-personal equilibrium, denote by $(\tau^n,\sigma^n)$\footnote{In this setup, a stopping strategy is a map $\tau:\ \mathbb{N}\times \mathbb{X}:\ \{0,1\}$, where $0$ stands for stop, and $1$ for continue.}, for the related time-inconsistent stopping game with a finite time horizon $n$. Then by a diagonalization argument, we can find a subsequence $(\tau^{n_k},\sigma^{n_k})$ with a path-wise limit $(\tau,\sigma)$. Then it can be easily shown that $(\tau,\sigma)$ is a soft intra-personal equilibrium. See \cite[Proof of Theorem 4.1]{nutz2020} for a similar argument. Note that if we assume $X$ is non-Markovian, then the result would still hold if we allow path- and time-dependent strategies using the same argument. {\color{red}(Comment: Not sure whether it is better to remove this remark or not.)}
%\end{remark}

%%%%%%%%%%%%%%%%%%%%%%%%%%%%%%

\section{Application: Negotiation with Diverse Impatience}\label{sec:application}
In this section, we apply our theoretic results in Section~\ref{sec:existence} to a two-player real options valuation problem. The vast literature on real options, see e.g. \cite{MS86, Dixit94, Smith95} among many others, focuses on a single firm's corporate decision making, particularly the optimal timing of a project's initiation. In contrast to this, we will study two firms' joint decision making on their cooperation to initiate a project together, embedding real options valuation in a nonzero-sum Dynkin game.  

Consider two firms who would like to cooperate to initiate a new project, such as entering a new market or developing a new product. Each firm has a proprietary skill/technology, so that only when they cooperate can the project be successfully carried out. Once the project is initiated, it will generate a fixed total revenue $R>0$. The cost of initiation $X$, on the other hand, evolves stochastically and is modeled by a discrete-time binomial structure as follows: There exist $u>1$ and $p\in(0,1)$ such that $X$ takes values in 
\begin{equation}
\X = \{u^i: i = 0, \pm1, \pm2, \dotso\}\label{X app}
\end{equation}
and satisfies
\[
\P_x(X_{1}/x = u)=p\quad \hbox{and}\quad \P_x(X_{1}/x = 1/u)=1-p,\quad \forall x\in\X.
\]
Assume additionally that $X$ is a submartingale, which corresponds to the condition 
%\begin{equation}\label{p>}
$p\ge \frac{1}{u+1}$.
%\end{equation} 
That is, the cost $X$ has a tendency to increase over time, which incentivizes the two firms to strike a deal of cooperation sooner than later.  

In negotiating such a deal, each firm, leveraging on its proprietary skill/technology, insists on taking a fixed (risk-free) larger share 
\begin{equation*}%\label{N}
N\in(R/2,R)
\end{equation*} 
of the total revenue $R>0$, while demanding the other firm to take the smaller share 
\begin{equation*}%\label{K}
K:= R- N \in (0,R/2)
\end{equation*} 
of revenue and additionally incur the stochastic (risky) cost $X$. Each firm either waits until the other gives in and takes the larger payoff $N$, or gives in to the other and takes the smaller payoff $(K-X_\tau)^+$, where $\tau$ denotes the firm's (random) time to give in. This can be formulated in our Dynkin game framework as
\[
f_1(x)=f_2(x)=(K-x)^+\quad \hbox{and}\quad g_1(x) = g_2(x)=N,\quad \forall x\in\X. 
\]
If the two firms happen to give in at the same time, they realize that both of them cannot endure any delay of a deal, and will quickly agree on a deal that is more mutually beneficial. This corresponds to the requirement $f_i\le h_i\le g_i$, $i\in\{1,2\}$. In addition, we model the time preferences of the firms using the hyperbolic discount function, i.e. for $i\in\{1,2\}$,
\[
\delta_i(t) = \frac{1}{1+\beta_i t},
\] 
where $\beta_i >0$ is a constant that represents the level of impatience of Firm $i$.  %That is, Firm 1 is more patient.

To facilitate the investigation of inter-personal equilibria between the two firms, we introduce a random walk $Y$ defined on some probability space $(\bar\Omega, \bar\F, P)$ such that 
\[
P(Y_{t+1}-Y_t = 1) = p\quad \hbox{and}\quad P(Y_{t+1}-Y_t =-1) = 1- p,\quad\forall t\in\Z_+.
\]
Consider
\[
\xi:=\inf\{t\ge 0: Y_t =0\}
\]
and define, for each $i\in\{1,2\}$,
\begin{equation}\label{alpha_i^n}
\alpha^n_i:=E^n\left[\frac{1}{1+\beta_i \xi}\right]\ \ \forall n\in\N,
\end{equation}
where $E^n$ denotes the expectation under $P$ conditioned on $Y_0 = n$. Note that $\alpha_n$, $n\in\N$, can be computed explicitly. For instance,
\begin{equation}\label{alpha 1}
\alpha^1_i=\sum_{k=1}^\infty\frac{\binom{2k-1}{k}p^{k-1}(1-p)^k}{2k-1}\cdot\frac{1}{1+\beta_i(2k-1)}. %,\quad \alpha'=\sum_{k=1}^\infty\frac{\binom{2k-1}{k}p^{k-1}(1-p)^k}{2k-1}\cdot\frac{1}{1+2\beta k}.
\end{equation}

\begin{lemma}\label{lem:app}
For $i\in\{1,2\}$, $\Gamma_i(\emptyset) = (0,y_i^*]\cap\X$, where 
\begin{equation}\label{y^*}
y_i^*:= \min\bigg\{\bigg[\frac{1-\alpha_i^1}{u-\alpha_i^1}K,\infty\bigg)\cap\X\bigg\}.
\end{equation}
\end{lemma}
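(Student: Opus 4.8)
The plan is to recognize $\Gamma_i(\emptyset)$ as the optimal intra-personal equilibrium of a \emph{one-player} discounted optimal stopping problem and then to pin down its threshold. Since $\emptyset\in\B$ gives $\rho_\emptyset=+\infty$, for every $S\in\B$ and $x\in\X$ one has $J_i(x,0,\rho_\emptyset)=f_i(x)=(K-x)^+$, $J_i(x,\rho^+_S,\rho_\emptyset)=\E_x[\delta_i(\rho^+_S)(K-X_{\rho^+_S})^+1_{\{\rho^+_S<\infty\}}]$, and $V_i^\emptyset(x,S)=\sup_{1\le\tau\le\rho^+_S}\E_x[\delta_i(\tau)(K-X_\tau)^+1_{\{\tau<\infty\}}]$ (the $\{\tau=\infty\}$ term drops out because $\delta_i(t)h_i(X_t)\le N\delta_i(t)\to0$). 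Thus, by Proposition~\ref{l3} and Theorem~\ref{t1}, $\Gamma_i(\emptyset)=\bigcup_n S_i^n(\emptyset)$ is the optimal equilibrium for stopping the discounted ``put'' $(K-\cdot)^+$ along the geometric grid $\X$, on which $X$ drifts upward since $p\ge\frac1{u+1}$. Write $S^*:=(0,y_i^*]\cap\X$. One records first that $0<y_i^*<K$: indeed $0<\alpha_i^1<1$ (because $\xi\ge1$ $P^1$-a.s.\ while $P^1(\xi<\infty)\ge1-p>0$), and $\frac{1-\alpha_i^1}{u-\alpha_i^1}<\frac1u$ as $u>1$, so the least grid point above $\frac{1-\alpha_i^1}{u-\alpha_i^1}K$ lies in $(0,K)$.

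The core estimate is the inequality $(K-u^ky_i^*)^+\le\alpha_i^k(K-y_i^*)$ for every $k\ge1$. Since $X$ can enter $S^*$ from above only at $y_i^*$, and the first-entrance time from $u^ky_i^*$ has, under $\P_{u^ky_i^*}$, the law of $\xi$ under $Y_0=k$, the right-hand side equals $J_i(u^ky_i^*,\rho^+_{S^*},\rho_\emptyset)$; the inequality then says exactly that the self at $u^ky_i^*$ does not want to stop. For $k=1$ it is the defining relation $y_i^*\ge\frac{1-\alpha_i^1}{u-\alpha_i^1}K$ after rearranging (and it is vacuous when $uy_i^*\ge K$). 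For $k\ge2$ with $u^ky_i^*<K$ it rearranges to $y_i^*\ge\frac{1-\alpha_i^k}{u^k-\alpha_i^k}K$, so it suffices to check $\frac{1-\alpha_i^k}{u^k-\alpha_i^k}\le\frac{1-\alpha_i^1}{u-\alpha_i^1}$; a short computation reduces this to $\alpha_i^k\ge\alpha_i^1-(1-\alpha_i^1)\sum_{j=1}^{k-1}u^j$, which follows from $\alpha_i^k\ge(\alpha_i^1)^k$ together with $\sum_{j=1}^{k-1}u^j\ge\sum_{j=1}^{k-1}(\alpha_i^1)^j$ (term by term, since $u>1>\alpha_i^1$). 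The bound $\alpha_i^k\ge(\alpha_i^1)^k$ is obtained by writing $\xi\stackrel{d}{=}\eta_1+\dots+\eta_k$ under $Y_0=k$ with i.i.d.\ copies $\eta_j$ of $\xi$ under $Y_0=1$ (strong Markov plus spatial homogeneity of $Y$), applying the log-subadditivity \eqref{DI} of $\delta_i$ repeatedly, and using independence. Granting this, an induction gives $S_i^n(\emptyset)\subseteq S^*$ for all $n$: if $S_i^n(\emptyset)\subseteq S^*$, then $\rho^+_{S_i^n(\emptyset)}\ge\rho^+_{S^*}$, hence $V_i^\emptyset(u^ky_i^*,S_i^n(\emptyset))\ge V_i^\emptyset(u^ky_i^*,S^*)\ge J_i(u^ky_i^*,\rho^+_{S^*},\rho_\emptyset)=\alpha_i^k(K-y_i^*)\ge(K-u^ky_i^*)^+$, so $\Phi_i^\emptyset$ adds no point of $\X\setminus S^*$; hence $\Gamma_i(\emptyset)\subseteq S^*$.

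Next I would show that $\Phi_i^\emptyset$ maps every set of the form $(0,z]\cap\X$ (and $\emptyset$) to a set of the same form; as $S_i^0(\emptyset)=\emptyset$, every $S_i^n(\emptyset)$, and hence $\Gamma_i(\emptyset)$, is then of the form $(0,z]\cap\X$ or $\emptyset$. But $\emptyset\notin\cE_i^\emptyset$ (one computes $\Theta_i^\emptyset(\emptyset)=\{x:(K-x)^+>0\}\neq\emptyset$) while $\Gamma_i(\emptyset)\in\cE_i^\emptyset$ by Proposition~\ref{l3}, and $\Gamma_i(\emptyset)\neq\X$ by the previous paragraph; so $\Gamma_i(\emptyset)=(0,z_\infty]\cap\X$ for some $z_\infty\in\X$ with $z_\infty\le y_i^*$. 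If $z_\infty<y_i^*$, then $z_\infty\le y_i^*/u<\frac{1-\alpha_i^1}{u-\alpha_i^1}K$, which rearranges to $(K-uz_\infty)^+=K-uz_\infty>\alpha_i^1(K-z_\infty)=J_i(uz_\infty,\rho^+_{\Gamma_i(\emptyset)},\rho_\emptyset)$; since $uz_\infty\notin\Gamma_i(\emptyset)$, this gives $uz_\infty\in\Theta_i^\emptyset(\Gamma_i(\emptyset))\setminus\Gamma_i(\emptyset)$, contradicting $\Gamma_i(\emptyset)\in\cE_i^\emptyset$. Hence $z_\infty=y_i^*$, i.e.\ $\Gamma_i(\emptyset)=(0,y_i^*]\cap\X$.

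The step I expect to be the real obstacle is the down-set preservation invoked above: one must prove that $x\mapsto(K-x)^+-V_i^\emptyset\big(x,(0,z]\cap\X\big)$ changes sign at most once, from positive to negative, as $x$ runs up the grid beyond $z$, so that $\{x>z:(K-x)^+>V_i^\emptyset(x,(0,z]\cap\X)\}$ is again an ``interval'' $(z,z']\cap\X$. This is where the submartingale drift $p\ge\frac1{u+1}$ enters (for instance via $pu+(1-p)u^{-1}\ge1$, which makes one-step continuation strictly costly where the put is deep in the money), together with the monotonicity of $(K-\cdot)^+$ and the multiplicative structure of $\X$; by contrast, everything else above uses only \eqref{DI} and the elementary i.i.d.\ decomposition of the downward passage times of $Y$.
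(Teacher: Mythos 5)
Your route is genuinely different from the paper's: the paper simply observes that with $T=\emptyset$ the game collapses to the one-player problem of \cite[Section 5]{HZ19}, quotes \cite[Proposition 5.5]{HZ19} to get $\widehat\cE_i^\emptyset=\{(0,y_i^*]\cap\X\}$, and concludes via Theorem~\ref{t1}; you instead try to run the iteration \eqref{S_n} by hand. A good part of your argument is correct: the reduction $J_i(x,\tau,\rho_\emptyset)=\E_x[\delta_i(\tau)(K-X_\tau)^+1_{\{\tau<\infty\}}]$, the identity $J_i(u^k y_i^*,\rho^+_{S^*},\rho_\emptyset)=\alpha_i^k(K-y_i^*)$ (first entrance from above occurs at $y_i^*$), the algebraic reduction of $(K-u^ky_i^*)^+\le\alpha_i^k(K-y_i^*)$ to $\alpha_i^k\ge\alpha_i^1-(1-\alpha_i^1)\sum_{j=1}^{k-1}u^j$, the bound $\alpha_i^k\ge(\alpha_i^1)^k$ from the i.i.d.\ ladder decomposition of $\xi$ and \eqref{DI}, and the induction showing $\Phi_i^\emptyset$ never adds a point above $y_i^*$ — so $\Gamma_i(\emptyset)\subseteq(0,y_i^*]\cap\X$ — all check out. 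Your endpoint argument is also fine, and in fact does not even need the interval structure: since the walk moves one grid step at a time, from any $x>\max\Gamma_i(\emptyset)$ the first entrance into $\Gamma_i(\emptyset)$ occurs at $\max\Gamma_i(\emptyset)$, so your contradiction at $u\,\max\Gamma_i(\emptyset)$ works with $z_\infty:=\max\Gamma_i(\emptyset)$ and forces $y_i^*\in\Gamma_i(\emptyset)$.

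The genuine gap is the step you yourself flag and only announce (``I would show''): that $\Phi_i^\emptyset$ maps sets of the form $(0,z]\cap\X$ to sets of the same form, equivalently that $x\mapsto(K-x)^+-V_i^\emptyset(x,(0,z]\cap\X)$ crosses zero at most once above $z$. Without it you cannot rule out holes, so all you get is $y_i^*\in\Gamma_i(\emptyset)\subseteq(0,y_i^*]\cap\X$, not the claimed equality. This is not a routine monotonicity fact: both $(K-x)^+$ and $V_i^\emptyset(x,(0,z]\cap\X)$ are nonincreasing in $x$, so single crossing needs a quantitative comparison of their decrements (e.g.\ that the continuation value drops by at most $(u-1)x$ per upward grid step), and the obvious coupling of the chains started at $x$ and $ux$ is obstructed both by the differing constraint $\tau\le\rho^+_{(0,z]\cap\X}$ and by the fact that $\delta_i(t)X_t$ need not be a supermartingale here; your heuristic that $pu+(1-p)u^{-1}\ge1$ ``makes one-step continuation costly'' is not a proof. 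This connectedness/down-set structure is exactly the content the paper imports from \cite{HZ19} (the second half of the proof of Lemma 5.1 there, together with Lemma 5.3, which are also invoked in the proof of Proposition~\ref{prop:1>2}); until you supply an argument for it, the lower-bound half of the lemma remains unproven.
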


\begin{proof}
Observe from \eqref{J} and \eqref{F} that
\[
J_i(x,\tau,\rho_\emptyset) = \E_x[F_i(\tau,\rho_\emptyset)] = \E_x[\delta_i(\tau)f_i(X_\tau)] = \E_x\bigg[\frac{(K-X_\tau)^+}{1+\beta_i \tau}\bigg],\quad \forall x\in\X. 
\]
Hence, the one-player stopping analysis in \cite[Section 5]{HZ19} applies to our current setting. The same arguments therein (particularly \cite[Proposition 5.5]{HZ19}) show that $(0,y_i^*]\cap\X$, with $y_i^*$ given as in \eqref{y^*}, is Player $i$'s unique optimal intra-personal equilibrium w.r.t. $\emptyset$, i.e. $\widehat \cE^\emptyset_i =\{(0,y_i^*]\cap\X\}$. As $\Gamma_i(\emptyset)$ belongs to $\widehat \cE^\emptyset_i$ by Theorem~\ref{t1}, it must coincide with $(0,y_i^*]\cap\X$.  
\end{proof}

%\begin{remark}
%In the definition of $y_i^*$, the left end point $\frac{1-\alpha_i^1}{u-\alpha_i^1}$ of the interval in \eqref{}
%\end{remark}

With the aid of Lemma~\ref{lem:app}, we will show that the alternating iterative procedure \eqref{alternating} always leads to a sharp inter-personal equilibrium. Let us divide our investigation into two cases, depending on the impatience levels of the two firms: $\beta_1\le \beta_2$ (Proposition~\ref{prop:1<2}) and $\beta_1> \beta_2$ (Proposition~\ref{prop:1>2}). 

\begin{proposition}\label{prop:1<2}
Suppose $\beta_1\le \beta_2$. Then, the alternating iterative procedure \eqref{alternating} terminates after one iteration, and gives a sharp inter-personal equilibrium. That is, 
\[
(S_\infty,T_\infty)=(S_0,T_0) = (\emptyset,(0,y_2^*]\cap\X) \in \widehat\cE. 
\]
\end{proposition}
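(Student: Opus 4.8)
The plan is to follow the alternating iterative procedure \eqref{alternating} starting from $S_0 = \emptyset$ and show that it stabilizes immediately. First I would compute $T_0 := \Gamma_2(S_0) = \Gamma_2(\emptyset)$, which by Lemma~\ref{lem:app} equals $(0,y_2^*]\cap\X$ with $y_2^*$ as in \eqref{y^*}. The crux is then to show $S_1 := \Gamma_1(T_0) = \emptyset$, i.e. that Player~1's optimal intra-personal equilibrium w.r.t.\ $T_0 = (0,y_2^*]\cap\X$ is the empty set. Once this is established, we get $T_1 := \Gamma_2(S_1) = \Gamma_2(\emptyset) = T_0$, and hence $(S_n,T_n) = (\emptyset, (0,y_2^*]\cap\X)$ for all $n$, so that $(S_\infty,T_\infty) = (\emptyset,(0,y_2^*]\cap\X)$. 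By Lemma~\ref{lem:sufficient} (since $\Gamma_1(T_\infty) = S_\infty$ and $\Gamma_2(S_\infty) = T_\infty$), this pair is a sharp inter-personal equilibrium.

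To show $\Gamma_1(T_0) = \emptyset$, by \eqref{S_n}--\eqref{S_infty} it suffices to show $\Phi_1^{T_0}(\emptyset) = \emptyset$, i.e.\ that for every $x\in\X$,
\[
J_1(x,0,\rho_{T_0}) \le V_1^{T_0}(x,\emptyset) = \sup_{\tau\ge 1}\E_x[F_1(\tau,\rho_{T_0})].
\]
For $x\in T_0 = (0,y_2^*]\cap\X$ this is automatic since $J_1(x,0,\rho_{T_0}) = h_1(x)\le g_1(x)$ and waiting one step into $T_0$ (if $\rho_{T_0}^+$ reaches $T_0$) or more generally a suitable admissible $\tau$ recovers at least $\delta_1(1)$ times a $g_1$-type payoff; the clean way is to compare immediate stopping with the strategy of stopping at $\rho_{T_0}^+$ and invoke $h_1\le g_1$ together with \eqref{DI}. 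The substantive case is $x\notin T_0$, i.e.\ $x > y_2^*$ (within $\X$). Here $J_1(x,0,\rho_{T_0}) = f_1(x) = (K-x)^+$. If $x\ge K$ this is $0\le V_1^{T_0}(x,\emptyset)$ trivially. If $x < K$, note $x$ must lie in a narrow band $y_2^* < x < K$; here I would exhibit an explicit admissible stopping strategy for Player~1 — namely "wait until the cost process first drops to $y_2^*$ or below, then receive the $g_1 = N$ payoff because at that point the state is in $T_0$ and $\rho_{T_0} \le$ that time, triggering the $g_i(X_{\rho_{T_0}}) = N$ term" — and show its value dominates $(K-x)^+$. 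This is where the hypothesis $\beta_1 \le \beta_2$ enters: it gives $\alpha_1^1 \ge \alpha_2^1$ (larger impatience shrinks $\alpha_i^1$, since $\alpha_i^n = E^n[1/(1+\beta_i\xi)]$ is decreasing in $\beta_i$), hence $y_1^* \ge y_2^*$ from the formula \eqref{y^*}; and $y_1^*$ being at least $y_2^*$ means that every state where Player~1 would \emph{want} to stop immediately (the region $(0,y_1^*]\cap\X$, her own optimal stopping region against the empty policy) is already contained in $T_0 = (0,y_2^*]\cap\X$ — wait, it is the reverse inclusion $(0,y_2^*]\subseteq(0,y_1^*]$. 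I would instead argue directly: for $x\notin T_0$, the firm prefers to wait because Player~2 will give in as soon as the cost hits $(0,y_2^*]\cap\X$, at which point Player~1 collects $N$; since $N > K \ge (K-x)^+$ and the discounting incurred in waiting for the cost to fall into $(0,y_2^*]$ is controlled — precisely the same computation as in \cite[Proposition~5.5]{HZ19} but with the $f_1$ payoff replaced on the event $\{\tau > \rho_{T_0}\}$ by the strictly larger $g_1 = N$ — the waiting value strictly exceeds immediate surrender.

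The main obstacle I anticipate is the case $y_2^* < x < K$ (if such states exist for the given parameters): one must verify that $V_1^{T_0}(x,\emptyset) \ge (K-x)^+$ by explicit estimation, and here it is essential to use that stopping policy $T_0$ contributes the large payoff $g_1(X_{\rho_{T_0}}) = N$ on the event $\{\tau > \rho_{T_0}\}$ rather than the small $f_1$ payoff. Concretely, taking $\tau = \rho_{T_0}^+$ (or $\tau$ equal to the first hitting time of $(0,y_2^*]\cap\X$) gives $\E_x[F_1(\tau,\rho_{T_0})] \ge \E_x[\delta_1(\rho_{T_0}) N \,\mathbf 1_{\{\tau > \rho_{T_0}\}}] + \E_x[\delta_1(\tau) f_1(X_\tau)\mathbf 1_{\{\tau \le \rho_{T_0}\}}]$; bounding the discount factor below using the random-walk passage-time analysis (the quantities $\alpha_i^n$ of \eqref{alpha_i^n}) and using $N > K$ closes the gap. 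I would lean on the fact that this is essentially the computation already carried out in \cite[Section~5]{HZ19}, adapted to the presence of the $g_1$-reward, so the argument reduces to a one-line domination once the right comparison strategy is named. Everything else — the termination of the iteration and the appeal to Lemma~\ref{lem:sufficient} — is immediate.
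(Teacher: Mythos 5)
Your overall skeleton (compute $T_0=\Gamma_2(\emptyset)=(0,y_2^*]\cap\X$ via Lemma~\ref{lem:app}, show $\Gamma_1(T_0)=\emptyset$, then invoke Lemma~\ref{lem:sufficient}) matches the paper, but the crucial step $\Gamma_1(T_0)=\emptyset$ is where your argument has a genuine gap. The paper gets it structurally, with no estimates: $\beta_1\le\beta_2$ gives $\alpha_1^1\ge\alpha_2^1$, hence $y_1^*\le y_2^*$ (the map $a\mapsto\frac{1-a}{u-a}$ is \emph{decreasing} in $a$); then Corollary~\ref{l8} (monotonicity of $T\mapsto\Gamma_1(T)$, available since $g_1\equiv N$ makes \eqref{supermartingale} trivial) gives $\Gamma_1(T_0)\subseteq\Gamma_1(\emptyset)=(0,y_1^*]\cap\X$, while Proposition~\ref{l3} gives $\Gamma_1(T_0)\cap T_0=\emptyset$; since $(0,y_1^*]\subseteq(0,y_2^*]$, these force $\Gamma_1(T_0)=\emptyset$. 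You never invoke Corollary~\ref{l8}, and in the one place where you try to use the hypothesis you get the comparison backwards ($\beta_1\le\beta_2$ yields $y_1^*\le y_2^*$, not $y_1^*\ge y_2^*$), notice the confusion mid-sentence, and abandon that thread.

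Your replacement argument — for $x\in(y_2^*,K)\cap\X$, wait for Player 2 to capitulate and collect $N$, claiming "$N>K$ and the discounting is controlled" closes the gap — is not a proof and, as stated, is false without the hypothesis: for $\beta_1$ large one has $V_1^{T_0}(x,\emptyset)\le N/(1+\beta_1)<K-x$ on exactly this band (this is the computation \eqref{for T} in the proof of Corollary~\ref{coro:1>2}, where $\Gamma_1(T_0)\neq\emptyset$). So the step cannot be a "one-line domination"; any direct verification must deploy $\beta_1\le\beta_2$ quantitatively, e.g.\ via $\E_x[\delta_1(\rho_{T_0})\mathbf{1}_{\{\rho_{T_0}<\infty\}}]=\alpha_1^m$ for $x=u^m y_2^*$, the chain $\alpha_1^m\ge(\alpha_1^1)^m\ge(\alpha_2^1)^m$ (using \eqref{DI}), the lower bound $y_2^*\ge\frac{1-\alpha_2^1}{u-\alpha_2^1}K$ from \eqref{y^*}, and an elementary inequality such as $(u-a)\sum_{j=0}^{m-1}a^j\le u^m$ — none of which appears in your sketch. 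There is also a technical slip in the comparison strategy you name: from $x\notin T_0$ one has $\rho^+_{T_0}=\rho_{T_0}$ $\P_x$-a.s., so $\tau=\rho^+_{T_0}$ triggers the $h_1$-term in \eqref{F}, not the $g_1=N$ term; to collect $N$ you must stop strictly after $\rho_{T_0}$ (e.g.\ $\tau=\rho_{T_0}+1$ or $\tau=\infty$). The cleaner fix is simply to use the paper's route: monotonicity in the opponent's policy plus $y_1^*\le y_2^*$ makes all band-by-band estimates unnecessary.
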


\begin{proof}
Note from \eqref{alpha 1} that $\beta_1\le \beta_2$ implies $\alpha_1^1\ge \alpha_2^1$. This in turn yields $y_1^*\le y_2^*$, in view of \eqref{y^*}. 
Following \eqref{alternating}, we have $S_0 :=\emptyset$ and $T_0 := \Gamma_2(S_0)= \Gamma_2(\emptyset) = (0,y_2^*]\cap\X$, where the last equality is due to Lemma~\ref{lem:app}. Now, in view of Corollary~\ref{l8}, Lemma~\ref{lem:app}, and Proposition~\ref{l3}, 
\begin{equation}\label{use 3}
\begin{split}
&\Gamma_1(T_0) \subseteq \Gamma_1(\emptyset) = (0,y_1^*]\cap\X,\\
&\Gamma_1(T_0)\cap \big( (0,y_2^*]\cap\X\big) =\Gamma_1(T_0)\cap T_0=\emptyset. 
\end{split}
\end{equation}
As $y_1^*\le y_2^*$, the above two relations entail $\Gamma_1(T_0) = \emptyset =S_0$. This, together with $\Gamma_2(S_0)=T_0$, shows that $(S_0,T_0)\in \widehat\cE$, thanks to Lemma~\ref{lem:sufficient}.
\end{proof}

\begin{remark}\label{rem:1>2}
For the case $\beta_1\ge \beta_2$, by starting with $T_0=\emptyset$ and switching the roles of $S_n$ and $T_n$ in \eqref{alternating}, we may repeat the same arguments in the above proof to show that $T_0$ and $S_0 := \Gamma_1(T_0) = (0,y^*_1]\cap\X$ satisfy $\Gamma_2(S_0) =\emptyset=T_0$, which then yields $(S_0,T_0)=((0,y^*_1]\cap\X,\emptyset)\in\widehat\cE$. 
\end{remark}

\begin{proposition}\label{prop:1>2}
Suppose $\beta_1> \beta_2$. Then, the alternating iterative procedure \eqref{alternating} falls into the following three cases.
\begin{itemize}
\item {\bf Case 1:} $S_1=\emptyset$. Then $(S_\infty,T_\infty)= (\emptyset,(0,y_2^*]\cap\X) \in \widehat\cE$.
\item {\bf Case 2:} $S_1\neq\emptyset$ and $T_n =\emptyset$ for some $n>1$. Then $(S_{\infty},T_{\infty}) = ((0,y_1^*]\cap\X, \emptyset) \in \widehat\cE$.
\item {\bf Case 3:} $S_1\neq\emptyset$ and $T_n \neq\emptyset$ for all $n>1$. 
\begin{itemize}
\item If \eqref{alternating} terminates in finite steps, then there exist $y^*_2<c\le y^*_1$ and $0<d\le y^*_2$ such that 
$
(S_\infty,T_\infty)= ([c,y_1^*]\cap\X, (0,d]\cap\X) \in \widehat\cE.  %\quad \hbox{for some}\ y^*_2<c\le y^*_1\ \hbox{and}\ 0<d\le y^*_2. 
$
\item If \eqref{alternating} continues indefinitely, $(S_\infty,T_\infty)= ((0,y_1^*]\cap\X, \emptyset) \in \widehat\cE$. 
\end{itemize}
\end{itemize}
\end{proposition}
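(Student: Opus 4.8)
The plan is to reduce sharpness in every case to \lemref{lem:sufficient}, which demands $\Gamma_1(T_\infty)=S_\infty$ \emph{and} $\Gamma_2(S_\infty)=T_\infty$. The hypotheses of \thmref{t2} are met automatically here: $f_i=(K-\cdot)^+\le K<N=g_i$ gives $f_i\le h_i\le g_i$, and since $g_i\equiv N$ the process $\bigl(\delta_i(t)g_i(X_t)\bigr)_{t\ge0}=\bigl(\delta_i(t)N\bigr)_{t\ge0}$ is deterministic and nonincreasing, hence a supermartingale, so \eqref{supermartingale} holds. Thus \thmref{t2} already yields that $(S_n)$ is nondecreasing, $(T_n)$ is nonincreasing, $(S_\infty,T_\infty)\in\cE$, and $\Gamma_1(T_\infty)=S_\infty$; it remains only to establish $\Gamma_2(S_\infty)=T_\infty$ (the inclusion $\subseteq$ being free) together with the interval descriptions. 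Throughout I would use: (i) $\beta_1>\beta_2$ forces $\alpha^1_1<\alpha^1_2$ by \eqref{alpha 1}, hence $y_1^*\ge y_2^*$ by \eqref{y^*}, so $(0,y_2^*]\cap\X\subseteq(0,y_1^*]\cap\X$; (ii) the antitonicity of $\Gamma_i$ (\corref{l8}) and $T\cap\Gamma_i(T)=\emptyset$ (\propref{l3}); (iii) $\Gamma_i(\emptyset)=(0,y_i^*]\cap\X$ (\lemref{lem:app}).

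\textbf{Structural induction (the hard part).} The main technical step I would carry out is to prove, by induction along \eqref{alternating}, that each $T_n$ is a lower interval $(0,d_n]\cap\X$ with $d_n\in\X\cup\{0\}$ and $d_n\le y_2^*$, and each $S_n$ ($n\ge1$) is an upper interval $[c_n,y_1^*]\cap\X$ with $c_n\in\X\cup\{+\infty\}$ (reading $c_n=+\infty$ as $S_n=\emptyset$). The point is that, given a lower interval $T$ for the opponent, $J_1(\cdot,\tau,\rho_T)$ is precisely the payoff of a one-player hyperbolic-discounting stopping problem with obstacle $(K-\cdot)^+$ augmented by the reward $N$ on the ``opponent concedes'' region $T$; the one-player analysis of \cite[Section~5]{HZ19} then forces its optimal intra-personal equilibrium $\Gamma_1(T)$ into the claimed upper-interval shape, and symmetrically for Player~2. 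Combined with (ii)--(iii) one gets $T_n=\Gamma_2(S_n)\subseteq\Gamma_2(\emptyset)=(0,y_2^*]\cap\X$, so $d_n\le y_2^*$; and $S_1=\Gamma_1(T_0)$ is disjoint from $T_0=(0,y_2^*]\cap\X$ while contained in $\Gamma_1(\emptyset)=(0,y_1^*]\cap\X$, so $S_1\subseteq(y_2^*,y_1^*]\cap\X$ and $c_1>y_2^*$ whenever $S_1\ne\emptyset$. The detailed threshold computations in this induction additionally pin down the ranges $y_2^*<c\le y_1^*$ and $0<d\le y_2^*$ appearing in Case~3. I expect this induction, and the explicit first-passage calculations with the hitting time $\xi$ needed to locate each threshold, to be the main obstacle.

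\textbf{Cases 1, 2, and finite Case 3.} From (ii)--(iii) one gets the key fact that $\Gamma_2(S)=\emptyset$ whenever $(0,y_1^*]\cap\X\subseteq S$, since then $\Gamma_2(S)\subseteq\Gamma_2(\emptyset)=(0,y_2^*]\cap\X\subseteq S$ while $\Gamma_2(S)\cap S=\emptyset$. In Case~1 ($S_1=\emptyset$) the iteration is instantly stationary: $T_1=\Gamma_2(\emptyset)=T_0$ and $S_2=\Gamma_1(T_0)=\emptyset$, so $(S_\infty,T_\infty)=(\emptyset,(0,y_2^*]\cap\X)$ and $\Gamma_2(S_\infty)=\Gamma_2(\emptyset)=T_\infty$. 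In Case~2, taking $m>1$ minimal with $T_m=\emptyset$ gives $S_{m+1}=\Gamma_1(\emptyset)=(0,y_1^*]\cap\X$, hence $T_{m+1}=\Gamma_2(S_{m+1})=\emptyset=T_m$ by the key fact, so the iteration is stationary from index $m$ onward and $(S_\infty,T_\infty)=((0,y_1^*]\cap\X,\emptyset)$ with $\Gamma_2(S_\infty)=\emptyset=T_\infty$. And if, in Case~3, \eqref{alternating} becomes constant at some index $N$, then $T_{N+1}=\Gamma_2(S_{N+1})=\Gamma_2(S_\infty)=T_\infty$ directly. In all these situations \lemref{lem:sufficient} yields $(S_\infty,T_\infty)\in\widehat\cE$, and the interval forms come from the structural induction.

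\textbf{Indefinite Case 3.} Suppose finally that \eqref{alternating} never becomes eventually constant. Then $S_{n+1}\ne S_n$ for infinitely many $n$, for otherwise $S_{n+1}=S_n$ would give $T_{n+1}=\Gamma_2(S_{n+1})=\Gamma_2(S_n)=T_n$ and then $S_{n+2}=\Gamma_1(T_{n+1})=S_{n+1}$, that is, stationarity. Since $S_n=[c_n,y_1^*]\cap\X$ with $c_n$ nonincreasing in the discrete chain $\{u^i:u^i\le y_1^*\}$, whose only accumulation point is $0$, infinitely many strict inclusions force $c_n\downarrow0$, so $S_\infty=\bigcup_n[c_n,y_1^*]\cap\X=(0,y_1^*]\cap\X$. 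If $T_\infty\ne\emptyset$, then $\Gamma_1(T_\infty)=S_\infty=(0,y_1^*]\cap\X\supseteq(0,y_2^*]\cap\X\supseteq T_\infty$ would contradict $T_\infty\cap\Gamma_1(T_\infty)=\emptyset$ from \propref{l3}; hence $T_\infty=\emptyset$, and then $\Gamma_2(S_\infty)=\emptyset=T_\infty$ by the key fact above, so \lemref{lem:sufficient} gives $(S_\infty,T_\infty)=((0,y_1^*]\cap\X,\emptyset)\in\widehat\cE$.
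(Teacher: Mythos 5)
Your reduction and your handling of the easy cases are sound and essentially follow the paper's route: you verify the hypotheses of \thmref{t2} (indeed \eqref{supermartingale} is automatic since $g_i\equiv N$), you use \lemref{lem:app}, \corref{l8} and \propref{l3} exactly as the paper does, your ``key fact'' that $\Gamma_2(S)=\emptyset$ whenever $S\supseteq(0,y_1^*]\cap\X$ is precisely the argument of Remark~\ref{rem:1>2}, and Cases 1 and 2 go through as you describe. Your derivation of $T_\infty=\emptyset$ in the indefinite sub-case of Case 3 (via $\Gamma_1(T_\infty)=S_\infty$ from \thmref{t2} together with $T_\infty\cap\Gamma_1(T_\infty)=\emptyset$ from \propref{l3}) is even a slightly cleaner shortcut than the paper's appeal to Remark~\ref{rem:1>2}.

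The genuine gap is that the ``structural induction''---that every nonempty $S_n$ is of the form $[c_n,y_1^*]\cap\X$ with right endpoint \emph{exactly} $y_1^*$, and every $T_n$ of the form $(0,d_n]\cap\X$---is the heart of Case 3, and you do not prove it: you explicitly defer it as ``the main obstacle,'' together with the claimed ranges $y_2^*<c\le y_1^*$ and $0<d\le y_2^*$. Without it, the finite sub-case has no interval description at all, and the indefinite sub-case breaks: knowing only $S_n\subseteq(0,y_1^*]\cap\X$ with $c_n\downarrow 0$ does not give $S_\infty=(0,y_1^*]\cap\X$ unless each $S_n$ actually reaches up to $y_1^*$, which is exactly the point at issue. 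The paper closes this step as follows: since $S_1\neq\emptyset$ is contained in $(y_2^*,y_1^*]\cap\X$, the submartingale property of $X$ allows one to repeat the connectedness argument from the second half of the proof of \cite[Lemma 5.1]{HZ19}, so $S_1=[c_1,c_1']\cap\X$; then the argument of \cite[Lemma 5.3]{HZ19} forces $c_1'=y_1^*$ (otherwise $S_1\notin\cE_1^{T_0}$, contradicting \propref{l3}), and the symmetric argument gives $T_1=(0,d_1]\cap\X$ (the lower endpoint must be $0$, again by \propref{l3}); the form for all $n$ then follows from monotonicity of the iterates and the disjointness $S_n\cap T_n=\emptyset$. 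Note that no explicit first-passage computation of the thresholds $c_n,d_n$ via the hitting time $\xi$ is needed---only this qualitative shape---so your plan both over-estimates what must be computed and leaves the genuinely required structural claim unproven.
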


\begin{proof}
Note from \eqref{alpha 1} and \eqref{y^*} that $\beta_1> \beta_2$ implies $y_1^*\ge y_2^*$. If $y_1^*= y_2^*$ (which happens when $\beta_1-\beta_2>0$ is sufficiently small), the same arguments in the proof of Proposition~\ref{prop:1<2} apply, and we end up with Case 1. In the rest of the proof, we assume $y_1^*> y_2^*$. Following \eqref{alternating}, $S_0 :=\emptyset$ and $T_0 := \Gamma_2(S_0)= \Gamma_2(\emptyset) = (0,y_2^*]\cap\X$, where the last equality is due to Lemma~\ref{lem:app}. If $S_1:=\Gamma_1(T_0)=\emptyset=S_0$, \eqref{alternating} terminates with $\Gamma_1(T_0)=S_0$ and $\Gamma_2(S_0)=T_0$. Hence, $(S_\infty,T_\infty)=(S_0,T_0) =  (\emptyset,(0,y_2^*]\cap\X) \in \widehat\cE$ by Lemma~\ref{lem:sufficient}. Alternatively, if $S_1\neq\emptyset$ but $T_n =\emptyset$ for some $n>1$, following the argument in Remark~\ref{rem:1>2} gives $(S_{n+1},T_{n+1}) = ((0,y_1^*]\cap\X, \emptyset) \in \widehat\cE$.

It remains to deal with the situation where $S_1\neq \emptyset$ and $T_n\neq\emptyset$ for all $n\ge 1$. By using Corollary~\ref{l8}, Lemma~\ref{lem:app}, and Proposition~\ref{l3} as in \eqref{use 3}, we get
\begin{equation}\label{S_1 subseteq}
\begin{split}
&S_1:=\Gamma_1(T_0)\subseteq\Gamma_1(\emptyset)=(0,y_1^*]\cap\X,\\
&S_1 \cap \big((0,y^*_2]\cap\X \big)  = \Gamma_1(T_0)\cap T_0 = \emptyset.
\end{split}
\end{equation}
As $S_1\neq\emptyset$ and $y_2^*<y_1^*$, the above two inequalities imply $S_1 = A\cap\X$ for some nonempty $A\subseteq (y_2^*,y_1^*]$. With $X$ being a submartingale, the same argument in the second half of the proof of \cite[Lemma 5.1]{HZ19} can be repeated here, showing that the set $A$ has to be connected, i.e. $A=[c_1,c'_1]$ for some $c_1, c_1'\in\X$ with $y_2^*<c_1\le c'_1\le y_1^*$. Now, by the same argument in \cite[Lemma 5.3]{HZ19}, $c_1'\in\X$ needs to be large or equal to $\frac{1-\alpha_1^1}{u-\alpha_1^1}$, otherwise $S_1=[c_1,c'_1]\cap\X$ would not belong to $\cE_1^{T_0}$ (which would contradict Proposition~\ref{l3}). We then conclude $c'_1=y_1^*$ and thus $S_1 = [c_1,y^*_1] \cap\X$. Next, since $T_1:=\Gamma_2(S_1)\neq\emptyset$ and $T_1\subseteq T_0=(0,y_2^*]\cap\X$, we must have $T_1 = B\cap\X$ for some nonempty $B\subseteq (0,y_2^*]$. Again, by the same argument in the second half of the proof of \cite[Lemma 5.1]{HZ19}, $B$ is connected. Also, we must have $\inf B=0$, otherwise $T_1=B\cap\X\notin\cE_2^{S_1}$ (which would contradict Proposition~\ref{l3}). Thus, $T_1 = (0,d_1]\cap\X$ for some $d_1\in\X$ with $0<d_1\le y_2^*$. As $(S_n)$ is nondecreasing, $(T_n)$ is nonincreasing, and $T_n\cap S_n = \Gamma_2(S_n)\cap S_n =\emptyset$ (by Proposition~\ref{l3}), there exist nonincreasing sequences $(c_n)$ and $(d_n)$ of positive reals with $d_n< c_n$ such that
\begin{equation}\label{S_n, T_n}
S_n = [c_n,y^*_1] \cap\X\quad \hbox{and}\quad T_n = (0,d_n]\cap\X,\quad \forall n\in\N. 
\end{equation}
Now, if there exists $n^*\in\N$ such that $c_{n^*}=c_{n^*+1}$ or $d_{n^*-1}=d_{n^*}$, we get $\Gamma_2(S_{n^*})=T_{n^*}$ and $\Gamma_1(T_{n^*})=S_{n^*}$, so that $(S_\infty,T_\infty)=(S_{n^*},T_{n^*})\in\widehat\cE$ by Lemma~\ref{lem:sufficient}. If there exists no such $n^*\in\N$, the iterative procedure \eqref{alternating} continues indefinitely with $c_n\downarrow 0$ and $d_n\downarrow 0$, leading to $(S_\infty,T_\infty)=((0,y^*_1]\cap\X,\emptyset)$, which belongs to $\widehat\cE$ by Remark~\ref{rem:1>2}.
\end{proof}

Finally, let us explore the more extreme situation where one firm is highly impatient while the other is highly patient.  
%Indeed, if $(1-p)\frac{u-1}{u}+p\frac{N}{K}-1\le 0$, it is trivial that $\overline\beta>0 = \underline\beta$; if $(1-p)\frac{u-1}{u}+p\frac{N}{K}-1>0$, 
%we deduce from $\frac{u-1}{u}<1<\frac{N}{K}$ and \eqref{p>} that
%\[
%\underline\beta \ge \bigg(1-\frac{1}{u+1}\bigg)\frac{u-1}{u}+\frac{1}{u+1}\frac{N}{K}-1=\frac{N/K -2}{u+1}>0,
%\]
%where the last inequality is due to \eqref{N} and \eqref{K}. 
%by direct calculation,
%\[
%\overline\beta-\underline\beta = \frac{u}{u-1}\frac{N}{K} - \frac{u-1}{u}+\bigg(\frac{u-1}{u}-\frac{N}{K}\bigg)p > \bigg(\frac{u}{u-1}-1\bigg)\frac{N}{K}>0,
%\]
%where the first inequality follows from $\frac{u-1}{u}<1<\frac{N}{K}$ and $p<1$. 

\begin{corollary}\label{coro:1>2}
If $\beta_1>0$ is sufficiently large and $\beta_2>0$ is sufficiently small, 
%$\beta_1>\overline\beta$ and $\beta_2 <\underline\beta$, 
the alternating iterative procedure \eqref{alternating} yields $(S_\infty,T_\infty)= ((0,y_1^*]\cap\X, \emptyset) \in \widehat\cE$.
\end{corollary}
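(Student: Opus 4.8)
The plan is to route everything through Proposition~\ref{prop:1>2}. Since $\beta_1$ large and $\beta_2$ small force $\beta_1>\beta_2$, that proposition applies and \eqref{alternating} falls into Case~1, Case~2, or Case~3; in Case~2 and in the ``continues indefinitely'' branch of Case~3 the output is already $((0,y_1^*]\cap\X,\emptyset)\in\widehat\cE$, so it suffices to exclude Case~1 and the ``finite steps'' branch of Case~3. First I would record, from \eqref{alpha 1}--\eqref{y^*}, the limiting shape of the thresholds: as $\beta_1\to\infty$ one has $\alpha_1^1\to 0$ (dominated convergence, using $\xi\ge 1$), so $y_1^*$ increases to $y_1^\infty:=\min\{[K/u,\infty)\cap\X\}\in[K/u,K)$; as $\beta_2\to 0$, $\alpha_2^1$ increases toward $E^1[1_{\{\xi<\infty\}}]$ while the coefficient $\tfrac{1-\alpha_2^1}{u-\alpha_2^1}$ stays below $\tfrac1u$, so that $y_2^*<y_1^*$ — indeed $y_2^*\le y_1^*/u$ — for $\beta_1$ large and $\beta_2$ small.

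The first step would be a ``greedy Player~1'' lemma: for $\beta_1$ large enough that $N/(1+\beta_1)<K-y_1^\infty$, one has $\Gamma_1((0,d]\cap\X)=(d,y_1^*]\cap\X$ for every $d\in\X$ with $0<d<y_1^*$ (and $\Gamma_1(\emptyset)=(0,y_1^*]\cap\X$ by Lemma~\ref{lem:app}). Indeed, for $x\notin(0,d]\cap\X$ any competing payoff $F_1$ is collected at a time $\ge 1$ and is bounded by $g_1\equiv N$, so $V_1^{(0,d]\cap\X}(x,\emptyset)\le\delta_1(1)N=N/(1+\beta_1)$; since $y_1^*u\ge K$ there is no state of $\X$ in $(y_1^*,K)$, so the states outside $(0,d]\cap\X$ with $f_1=(K-\cdot)^+>0$ are exactly those in $(d,y_1^*]\cap\X$, where $f_1\ge K-y_1^*>N/(1+\beta_1)$; hence iterating $\Phi_1^{(0,d]\cap\X}$ from $\emptyset$ stabilizes at once at $(d,y_1^*]\cap\X$. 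Taking $d=y_2^*$ then gives $S_1=\Gamma_1(T_0)=(y_2^*,y_1^*]\cap\X\neq\emptyset$, which excludes Case~1.

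For the remaining case, suppose we are in Case~3. By Proposition~\ref{prop:1>2} (and the Case~3 analysis in its proof), $T_n=(0,d_n]\cap\X$ with $y_2^*=:d_0\ge d_1\ge d_2\ge\cdots>0$, and by the previous step $S_n=\Gamma_1((0,d_{n-1}]\cap\X)=(d_{n-1},y_1^*]\cap\X=[d_{n-1}u,y_1^*]\cap\X$; the key structural fact is that the lower edge $d_{n-1}u$ of $S_n$ sits \emph{one level} above $d_{n-1}$. If \eqref{alternating} terminated, then $d_n=d_{n-1}=:d>0$ for some $n$, so $d\in T_n=\Gamma_2(S_n)\in\cE_2^{S_n}$ and $\Theta_2^{S_n}(T_n)=T_n$ would force $f_2(d)\ge J_2(d,\rho^+_{T_n},\rho_{S_n})$. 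But from $d$ a down-step (probability $1-p$) lands at $d/u\in T_n$, where Player~2's next self stops for $\delta_2(1)(K-d/u)$, while an up-step (probability $p$) lands at $du$, the bottom of $S_n$, where Player~1 gives in for $\delta_2(1)N$; hence $J_2(d,\rho^+_{T_n},\rho_{S_n})=\delta_2(1)[pN+(1-p)(K-d/u)]$. As $d<K$, $f_2(d)=K-d$, and a one-line computation gives
\[
\delta_2(1)\big[pN+(1-p)(K-d/u)\big]-(K-d)=\frac{p(N-K)-\beta_2K+d\big(\tfrac{u-1+p}{u}+\beta_2\big)}{1+\beta_2}>0
\]
for every $d>0$ once $\beta_2<p(N-K)/K$, contradicting the equilibrium identity. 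Hence $d_n<d_{n-1}$ for all $n$, so $d_n\le y_2^*/u^{\,n}\to 0$ and $T_\infty=\bigcap_nT_n=\emptyset$; thus Case~3 can only be the ``continues indefinitely'' branch, and $(S_\infty,T_\infty)=((0,y_1^*]\cap\X,\emptyset)\in\widehat\cE$. Combining with the first two steps finishes the proof.

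The hard part will be the uniformity in the last step. A priori the would-be termination level $d$ shrinks as $\beta_2\downarrow 0$ — it does when $p\le 1/2$, since then $y_2^*\to 0$ — so one cannot argue by fixing $d$ and sending $\beta_2\to 0$, and the continuation value of a patient Player~2 at a boundary state involves random-walk hitting times whose means blow up as $d\to 0$. The escape is exactly the one-level-gap structure $S_n=[d_{n-1}u,y_1^*]\cap\X$: a single up-step from $d_{n-1}$ already secures the large payoff $N$ with the full probability $p$, so the excess of the continuation value over $f_2(d)$ is bounded below by the $d$-free quantity $\big(p(N-K)-\beta_2K\big)/(1+\beta_2)$, and the single condition $\beta_2<p(N-K)/K$ then works at every stage of the iteration at once. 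Securing that gap structure is in turn what the impatience of Player~1 (via $V_1^{(0,d]\cap\X}(\cdot,\emptyset)\le N/(1+\beta_1)$) together with the explicit value of $y_1^*$ from \eqref{y^*} provides; one also needs, at the outset, $y_1^*>y_2^*$, which is where the explicit formulas \eqref{alpha 1}--\eqref{y^*} and the ``sufficiently large / sufficiently small'' hypotheses genuinely enter.
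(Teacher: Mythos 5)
Your proposal is correct and follows essentially the same route as the paper's proof: the same two parameter conditions (a lower bound on $\beta_1$ making Player 1's continuation value, bounded by $N/(1+\beta_1)$, lose to immediate stopping on $(d,y_1^*]$, and $\beta_2<p(N/K-1)$, which is exactly the paper's $\underline\beta$ via the inequality \eqref{iff eta}) drive the same mechanism---$S_n$ reaching down to one grid level above $d_{n-1}$, so an up-step from the boundary of $T_n$ secures $N$ and forces strict shrinkage of $T_n$---with the conclusion then read off from Proposition~\ref{prop:1>2} just as in the paper, your only reorganizations being the exact formula $\Gamma_1((0,d]\cap\X)=(d,y_1^*]\cap\X$ and a termination-contradiction in place of the paper's stage-by-stage recursion. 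The one loosely treated point, the separation $y_2^*<y_1^*$ for $\beta_1$ large and $\beta_2$ small, demands no more (in fact less) than the paper's own unproved claim \eqref{two points}, so it is not a gap relative to the paper's argument.
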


\begin{proof}
Take $\beta_1>\overline\beta:= \frac{u}{u-1}\left(\frac{N}{K}-1\right)+\frac{1}{u-1}>0$. With $\beta_1$ fixed, in view of \eqref{alpha 1} and \eqref{X app}, there exists $\beta^*>0$ small enough such that for $\beta_2<\beta^*$, we have $\frac{1-\alpha_2^1}{u-\alpha_2^1}<\frac{1-\alpha_1^1}{u-\alpha_1^1}$ and the interval
\begin{equation}\label{two points}
%\quad \hbox{and}\quad 
\bigg(\frac{1-\alpha_2^1}{u-\alpha_2^1}K, \frac{1-\alpha_1^1}{u-\alpha_1^1}K\bigg)\ \hbox{contains at least two points in $\X$}. %\neq\emptyset.
\end{equation}
Define $\underline{\beta}:=p\left(\frac{N}{K}-1\right)> 0$ and take $\beta_2<\min\{\beta^*, \underline\beta\}$. Note that $\beta_1>\overline\beta>\underline\beta> \beta_2$, where the second inequality is due to $p<1$. With $\beta_1>\beta_2$, we deduce from \eqref{alpha 1}, \eqref{y^*}, and \eqref{two points} that $y_1^*> y_2^*$ and the interval 
\begin{equation}\label{one point}
(y_2^*,y_1^*)\ \hbox{contains at least one point in $\X$}. %
\end{equation}

Following \eqref{alternating}, $S_0 :=\emptyset$ and $T_0 := \Gamma_2(S_0)= \Gamma_2(\emptyset) = (0,y_2^*]\cap\X$, where the last equality is due to Lemma~\ref{lem:app}. %Set $\bar y^*_2:= \min\{x\in\X:x>y_2^*\}$. 
By \eqref{one point}, $u y_2^*\in\X$ must belong to $(y_2^*,y_1^*)$. 
Recall from \eqref{S_1 subseteq} that 
\begin{equation}\label{S_1 coro}
S_1\subseteq [u y_2^*,y_1^*]\cap\X.
\end{equation} 
For any $x\in [u y_2^*,y_1^*)\cap\X$, observe that
\begin{align}
V_1^{T_0}(x,\emptyset) = \sup_{1\le\tau\le\rho^+_\emptyset}\E_x[F_1(\tau,T_0)]\le \frac{N}{1+\beta_1} &< K-\frac{1}{u} K \notag\\
&< K-\frac{1-\alpha_1^1}{u-\alpha_1^1}K\le K-x= J_1(x,0,\rho_{T_0}),\label{for T}
\end{align}
where the second inequality follows from $\beta_1>\overline\beta$, the third inequality is due to $u>1$ and $0<\alpha_1^1<1$ (recall \eqref{alpha_i^n}), and the last inequality stems from $x\in\X$, $x<y^*_1$, and the definition of $y^*_1$ in \eqref{y^*}. This shows that $[u y_2^*,y_1^*)\cap\X\subseteq \Phi^{T_0}_1(\emptyset)$. Then, in view of \eqref{S_n}-\eqref{S_infty}, $S_1:=\Gamma_1(T_0)\supseteq \Phi^{T_0}_1(\emptyset) \supseteq [u y_2^*,y_1^*)\cap\X$. %We thus conclude  $S_1=[\bar y_2^*,y_1^*]\cap\X$. 
In particular, $S_1\neq\emptyset$ as it must contain $u y^*_2$. If $T_n=\emptyset$ for some $n>1$, Case 2 of Proposition~\ref{prop:1>2} immediately gives $(S_\infty,T_\infty)= ((0,y_1^*]\cap\X, \emptyset) \in \widehat\cE$, as desired. Hence, we assume $T_n\neq\emptyset$ for all $n\in\N$ in the rest of the proof. 

As argued below \eqref{S_1 subseteq}, $T_1:= \Gamma_2(S_1) = (0,d_1]\cap\X$ for some $d_1\in\X$ with $0<d_1\le y^*_2$. We claim that $d_1< y_2^*$. First, observe from \eqref{y^*} that $y_2^*< u\cdot \frac{1-\alpha_2^1}{u-\alpha_2^1} K< K$. Consider the function 
\[
\eta(x):= \frac{p(N-K)+x(1-\frac{1-p}{u})}{K-x}\quad \hbox{for $x\in [0,K)$.}
\]
As $\eta'(x)>0$ for all $x\in[0,K)$, we have $\underline\beta = p\left(\frac{N}{K}-1\right) = \eta(0) < \eta(x)$ for all $x\in(0,K)$. Recalling $\beta_2<\underline\beta$, we obtain $\beta_2 < \eta(x)$ for all $x\in(0,K)$. By direct calculation, this is equivalent to
\begin{equation}\label{iff eta}
(1-p) \frac{K-x/u}{1+\beta_2} +p \frac{N}{1+\beta_2}> K-x, \quad \forall x\in(0,K).
\end{equation}
Assume to the contrary that $d_1=y_2^*$. Then,
\[
J_2(d_1,\rho^+_{T_1},\rho_{S_1})=(1-p) \frac{K-d_1/u}{1+\beta_2} +p \frac{N}{1+\beta_2}> K-d_1 = J_2(d_1, 0, \rho_{S_1}),
\]
where the inequality follows from $d_1=y_2^*<K$ and \eqref{iff eta}. This indicates $T_1= (0,d_1]\cap\X \notin \cE_2^{S_0}$, a contradiction to $T_1= \Gamma_2(S_1)\in\cE_2^{S_0}$ (by Proposition~\ref{l3}). With $d_1<y_2^*$ established, we conclude $T_1\subsetneq T_0$. 
%As argued above \eqref{S_n, T_n}, $S_2 :=\Gamma_1(T_1)= [c_2,y_1^*]\cap\X$ for some $d_1<c_2 \le \bar y_2^*$. We claim that $c_2 < \bar y_2^*:=\min S_1$. Indeed, if $c_2=\bar y_2^*$, then $S_2 =S_1$. 
%Set $\bar d_1:= \min\{x\in\X:x>d_1\}$. 
For any $x\in [u d_1,y^*_1)\cap\X$, the same calculation as in \eqref{for T} (with $T_0$ replaced by $T_1$) yields $V_1^{T_1}(x,\emptyset)<J_1(x,0,\rho_{T_1})$. By the same arguments below \eqref{for T}, this yields $S_2:=\Gamma_1(T_1)\supseteq \Phi^{T_1}_1(\emptyset) \supseteq [u d_1,y_1^*)\cap\X$; particularly, we have $u d_1\in S_2$. As $S_2\supseteq S_1$ by construction, we deduce from $u d_1\in S_2$, \eqref{S_1 coro}, and $d_1<y_2^*$, that  $S_2\supsetneq S_1$. 
%Since $S_2\cap \big((0,d_1]\cap\X\big)=\Gamma_1(T_1)\cap T_1=\emptyset$ (by Proposition~\ref{l3}), we must have $S_2 = [\bar d_1,y_1^*]\cap\X$. By noting that $\bar d_1\le \max T_0 \le y_2^*<\bar y_2^*$, we conclude $S_2\supsetneq S_1$. 
By following the same arguments as above, we can show recursively that $T_n\subsetneq T_{n-1}$ and $S_{n+1}\supsetneq S_n$ for all $n>1$. By Case 3 of Proposition~\ref{prop:1>2}, $(S_\infty,T_\infty)= ((0,y_1^*]\cap\X, \emptyset) \in \widehat\cE$.
\end{proof}

Propositions~\ref{prop:1<2} and \ref{prop:1>2}, along with Corollary \ref{coro:1>2}, admit interesting economic interpretations. Intuitively, a firm can demonstrate a strong determination not to give in, so as to coerce the other firm into giving in in the negotiation. Whether this strategy will work depends on the impatience levels of the two firms. For the case where Firm 1 is less impatient than Firm 2 (i.e. $\beta_1\le \beta_2$), Propositions~\ref{prop:1<2} shows that when Firm 1 insists that it will never give in (i.e. $S_0=\emptyset$), Firm 2 indeed gives in by taking the stopping policy $T_0 = (0,y_2^*]\cap\X$, and $(S_0,T_0) = (\emptyset,(0,y_2^*]\cap\X)$ is already a sharp inter-personal equilibrium. For the case where Firm 1 is more impatient than Firm 2 (i.e. $\beta_1> \beta_2$), Propositions~\ref{prop:1>2} shows that the negotiation becomes more complicated and Firm 1's coercion does not necessarily work. In particular, if Firm 1 is sufficiently impatient and Firm 2 is sufficiently patient, %(i.e. $\beta_1>\overline\beta>\underline\beta >\min\{\beta^*,\underline\beta\}>\beta_2$), 
Corollary \ref{coro:1>2} shows that Firm 1's coercion must fail, and fail in a drastic way---the coercer becomes coerced. While Firm 1 started with $S_0=\emptyset$, trying to coerce Firm 2 into giving in, the alternating game-theoretic reasoning \eqref{alternating} eventually lead to the sharp inter-personal equilibrium $((0,y_1^*]\cap\X, \emptyset)$. That is, it is Firm 1 who ultimately gives in by taking the stopping policy $(0,y_1^*]\cap\X$, while Firm 2 in the end decides not to stop at all.

\small
\bibliographystyle{siam}
\bibliography{refs}

\end{document}